\title{Nonconvex-Nonconcave Min-Max Optimization with a Small Maximization Domain \thanks{This work was supported by the NSF CAREER Award CCF2144985 and the AFOSR Young Investigator
Program Award FA9550-22-1-0192.}}
\author{
Dmitrii M. Ostrovskii\thanks{School of Mathematics \& Industrial \& Systems Engineering (ISyE), Atlanta, USA. Email: \texttt{ostrov@gatech.edu}}
\quad\quad Babak Barazandeh
\quad\quad Meisam Razaviyayn\thanks{Viterbi School of Engineering, University of Southern California, Los Angeles, USA. Email: \texttt{razaviya@usc.edu}}
}
\date{}
\DeclareSymbolFont{boldoperators}{OT1}{cmr}{bx}{n}
\edef\bar{\unexpanded{\protect\mathaccentV{bar}}\number\symboldoperators16}
\newcommand{\ApproxMax}{\textsf{\textup{ApproxMax}}}
\newcommand*\rel@kern[1]{\kern#1\dimexpr\macc@kerna}
\newcommand*\wbar[1]{%
  \begingroup
  \def\mathaccent##1##2{%
    \rel@kern{0.8}%
    \overline{\rel@kern{-0.8}\macc@nucleus\rel@kern{0.1}}%
    \rel@kern{-0.2}%
  }%
  \macc@depth\@ne
  \let\math@bgroup\@empty \let\math@egroup\macc@set@skewchar
  \mathsurround\z@ \frozen@everymath{\mathgroup\macc@group\relax}%
  \macc@set@skewchar\relax
  \let\mathaccentV\macc@nested@a
  \macc@nested@a\relax111{#1}%
  \endgroup
}
\newcommand{\NE}{\textsf{NE}}
\newcommand{\FNE}{\textsf{FNE}}
\newcommand{\Coupled}{\textsf{\textup{Coupled}}}
\newcommand{\Naive}{\textsf{\textup{Naive}}}
\renewcommand{\succeq}{\succcurlyeq}
\newcommand{\pinv}{\dagger}
\newcommand{\tHVP}{\textsf{time$($HVP$)$}}
\newcommand{\mHVP}{\textsf{mem$($HVP$)$}}
\DeclareMathOperator*{\Argmin}{Argmin}
\DeclareMathOperator*{\Argmax}{Argmax}
\DeclareMathOperator*{\argmin}{argmin}
\DeclareMathOperator*{\argmax}{argmax}
\newtheorem{theorem}{Theorem}[section]
\newtheorem{lemma}{Lemma}[section]
\newtheorem{definition}{Definition}
\newtheorem{proposition}{Proposition}[section]
\newtheorem{assumption}{Assumption}
\algnewcommand{\LineComment}[1]{\State \(\triangleright\) #1}
\newcommand{\Sx}{\mathsf{S}_{X}}
\newcommand{\Sy}{\mathsf{S}_{Y}}
\newcommand{\Sz}{\textsf{\textup S}_{Z}}
\newcommand{\envalias}[2]{%
  \expandafter\let\csname#1\expandafter\endcsname\csname#2\endcsname
  \expandafter\let\csname end#1\expandafter\endcsname\csname end#2\endcsname
}
\newcommand{\N}{\mathds{N}}
\newcommand{\fapx}{\hat f}
\newcommand{\vphiapx}{\hat \vphi}
\newcommand{\x}{{\textsf{\textup x}}}
\newcommand{\y}{{\textsf{\textup y}}}
\newcommand{\XX}{E_{\x}}
\newcommand{\YY}{E_{\y}}
\newcommand{\gx}{\nabla_\x}
\newcommand{\gy}{\nabla_\y}
\newcommand{\hxx}{\nabla^2_{\x{}^2}}
\newcommand{\hyy}{\nabla^2_{\y{}^2}}
\newcommand{\hxy}{\nabla^2_{\x\y}}
\newcommand{\Ty}[1]{\nabla^{#1}_{\y^{#1}}}
\newcommand{\Txy}[2]{\nabla^{#1}_{\x\y^{#2}}}
\newcommand{\Txxy}[2]{\nabla^{#1}_{\x^2\y^{#2}}}
\newcommand{\Dy}{\mathsf{D}}
\newcommand{\rhoy}{\rho_{\y\y\y}}
\newcommand{\diam}{\textup{diam}}
\newcommand{\mbar}{\overline m}
\newcommand{\Lxx}{\lam}
\newcommand{\Lxxp}{\bar\lam}
\newcommand{\mucrit}{\mu_{{\mathsf{cr}}}}
\newcommand{\Lyy}{N}
\newcommand{\Lxy}{\mu}
\newcommand{\Lxyp}{\bar\mu}
\newcommand{\cLyky}{\rho_{k}}
\newcommand{\cLykx}{\sigma_{k}}
\newcommand{\cLykxx}{\tau_{k}}
\newcommand{\by}{\hat y}
\newcommand{\ex}{\veps_\x}
\newcommand{\ey}{\veps_\y}
\newcommand{\gam}{\gamma}
\newcommand{\lam}{\lambda}
\newcommand{\Ry}{\mathsf{R}}
\newcommand{\proofstep}[1]{$\boldsymbol{{#1}^o}$}
\newcommand{\reg}{\textsf{\textup{reg}}}
\newcommand{\vphi}{\varphi}
\newcommand{\sig}{\sigma}
\newcommand{\half}{\frac{1}{2}}
\newcommand{\eps}{\epsilon}
\newcommand{\Gap}{\Delta}
\newcommand{\gamx}{\gam_\x}
\newcommand{\gamy}{\gam_\y}
\renewcommand{\le}{\leqslant}
\renewcommand{\ge}{\geqslant}
\newcommand{\proj}{\Pi}
\newcommand{\nn}{\notag\\}
\newcommand{\ba}{\begin{array}{c}}
\newcommand{\bal}{\begin{array}{l}}
\newcommand{\ea}{\end{array}}
\newcommand{\R}{\mathds{R}}
\newcommand{\bit}{\begin{itemize}}
\newcommand{\eit}{\end{itemize}}
\newcommand{\bvec}{\left(\!\!\!\begin{array}{c} }
\newcommand{\evec}{\end{array}\!\!\!\right)}
\newcommand{\E}{\mathds{E}}
\newcommand{\wt}{\widetilde}
\newcommand{\sign}{\mathop{ \rm sign}}
\newcommand{\lsim}{\lesssim}
\newcommand{\gsim}{\gtrsim}
\newcommand{\barr}{\begin{array}}
\newcommand{\earr}{\end{array}}
\newcommand{\ind}{\mathds{1}}
\newcommand{\prob}{{p}}
\newcommand{\cK}{\mathcal{K}}
\newcommand{\Uniform}{\textup{\textsf{Uniform}}}
\newcommand{\lammax}{\lambda_{\max}}
\newcommand{\wh}{\widehat}
\newcommand{\veps}{\varepsilon}
\newcommand{\lang}{\left\langle}
\newcommand{\rang}{\right\rangle}
\crefname{algorithm}{Algorithm}{Algorithms}
\crefname{assumption}{Assumption}{Assumptions}
\crefname{equation}{}{}
\crefname{figure}{Fig.}{Figs.}
\crefname{table}{Table}{Tables}
\crefname{section}{Section}{Sections}
\crefname{subsection}{Section}{Sections}
\crefname{theorem}{Theorem}{Theorems}
\crefname{lemma}{Lemma}{Lemmas}
\crefname{proposition}{Proposition}{Propositions}
\newtheorem{remark}{Remark}
\crefname{definition}{Definition}{Definitions}
\crefname{corollary}{Corollary}{Corollaries}
\crefname{remark}{Remark}{Remarks}
\crefname{example}{Example}{Examples}
\crefname{appendix}{Appendix}{Appendices}
\newcommand{\odima}[1]{{{\color{black}#1}}}
\newcommand{\cP}{{\mathcal{P}}}
\newcommand{\cY}{{\cal Y}}
\newcommand{\dom}{{\rm dom}}
\newcommand{\conv}{\textup{conv}}
\newcommand{\clconv}{\overline{\conv}}
\begin{document}
\maketitle

\begin{abstract}
We study the problem of finding approximate first-order stationary points in optimization problems of the form $\min_{x \in X} \max_{y \in Y} f(x,y)$, where the sets $X,Y$ are convex and $Y$ is compact; the objective function $f$ is smooth, but assumed neither convex in $x$ nor concave in $y$. Our approach relies upon replacing the function $f(x,\cdot)$ with its $k^\text{th}$-order Taylor approximation (in $y$) and finding a near-stationary point in the resulting surrogate problem. To guarantee its success, we establish the following result: let the Euclidean diameter of $Y$ be small in terms of the target accuracy $\varepsilon$, namely $O(\varepsilon^{\frac{2}{k+1}})$ for $k \in \mathbb{N}$ and $O(\varepsilon)$ for $k = 0$, with the constant factors controlled by certain regularity parameters of $f$; then any $\varepsilon$-stationary point in the surrogate problem \odima{is}~$O(\varepsilon)$-stationary for the initial problem, and \odima{vice versa.} Moreover, we show that these upper bounds are nearly optimal: the aforementioned reduction provably fails when the diameter of $Y$ is larger. For $0 \le k \le 2$, the surrogate function can be efficiently maximized in $y$; our general approximation result then allows to build efficient algorithms for finding a near-stationary point in nonconvex-nonconcave min-max problems, for which we also provide convergence guarantees.
\end{abstract}

\section{Introduction}
\label{sec:intro}

In the past few years, min-max optimization has become popular among practitioners due to its relevance for machine learning applications---in particular, when training generative adversarial networks (GANs) \cite{goodfellow2014generative, sanjabi2018convergence, nowozin2016f, gulrajani2017improved}, \odima{in} robust machine learning~\cite{madry2018towards, pourbabaee2020robust, zhang2019theoretically, sagawa2019distributionally, hu2018does, sinha2018certifying}, in fair statistical inference~\cite{rezaei2020fairness,baharlouei2020renyi, zhang2018mitigating, xu2018fairgan, lowy2021fermi, adel2019one}, in reinforcement learning~\cite{dai2018sbeed}, distributed optimization and learning over networks~\cite{rabbat2004distributed,li2013designing,hajinezhad2016nestt,hong2017prox,hong2018gradient}, and for optimal resource allocation in multi-agent systems~\cite{scutari2008optimal,scutari2013joint}.
The common task arising in these applications, as well as in many others, is solving optimization problems of the general form
\begin{eq}
\label{opt:min-max}
\min_{x \in X} \max_{y \in Y} f(x,y),
\tag{$\mathsf{P}$}
\end{eq}
where~$X,Y$ are some convex sets in the corresponding high-dimensional Euclidean spaces and~$f$ is smooth in both variables, possibly in a heterogeneous manner. %
Min-max optimization has been an active area of research since the early works of Nash, von Neumann, and Morgenstern~\cite{nash1950equilibrium,neumann1928theorie,morgenstern1953theory}.
A very important subclass of such problems where~$f$ is {\em convex-concave}---i.e., $f(\cdot,y)$ is convex and $f(x,\cdot)$ is concave for all~$(x,y) \in X \times Y$---has been studied extensively from the algorithmic viewpoint starting from the seminal work of Nemirovski~\cite{nemirovski2004prox}, who proposed a first-order algorithm with an~$O(1/T)$ dimension-independent convergence guarantee, generalizing the extragradient algorithm from the early work of Korpelevich~\cite{korpelevich1977extragradient}.
This has been followed by many extensions (e.g.,~\cite{juditsky2011solving,he2015mirror,he2015semi,juditsky2016solving,cox2017decomposition}) and applications in machine learning, statistics, and signal processing; see~\cite{xiao2010dual,optbook2,nesterov2013first} and references therein. 
Convex-concave min-max optimization is still a very active area of research: see, e.g., the recent works~\cite{wang2020improved,zhang2021lower} establishing the matching upper and lower complexity bounds under strong convexity-concavity; the recent proliferation of works on last-iterate convergence~\cite{daskalakis2018last,abernethy2019last,golowich2020last,wei2020linear}; the work~\cite{juditsky2021well} extending the functionality of CVX~\cite{grant2014cvx} (a.k.a.~``disciplined convex programming'') to convex-concave min-max problems and \odima{variational inequalities with monotone operators (MVIs)}~\cite{kinderlehrer2000introduction}.

\vspace{0.1cm}

Meanwhile, many modern applications where~\cref{opt:min-max} appears fall beyond the convex-concave scenario. 
For example, in the standard formulation of GANs~$f$ is neither convex in~$x$ nor concave in~$y$ \cite{goodfellow2014generative}; this is also the case in the task of adversarially-robust deep neural network training~\cite{rice2020overfitting}; other applications of nonconvex-nonconcave min-max optimization can be found in the recent review article~\cite{razaviyayn2020nonconvex}. 
Problems in which~$f$ is concave in~$y$ but nonconvex in~$x$ arise in fair inference ``\`a la R\'enyi''~\cite{baharlouei2020renyi} and when minimizing the maximum of smooth functions~\cite{zhang2020single} (in the latter case~$f(x,\cdot)$ is even affine). 
Moreover,  machine learning  under distributional uncertainty~\cite{sinha2018certifying, sagawa2020distributionally,hu2018does},  power control for wireless communication~\cite{lu2020hybrid}, and special formulations of the task of learning  from multiple domains~\cite{qian2019robust} all lead to nonconvex-concave min-max optimization.
In the past couple of years, these new applications reignited a substantial interest across the optimization theory community for analyzing~\cref{opt:min-max} 
in the general {nonconvex-nonconcave} setup, the {nonconvex-concave} setup as an approachable stepping stone towards it, and in other ``intermediate'' scenarios, where~$f(\cdot,y)$ is nonconvex, and~$f(x,\cdot)$ is not concave but has some other special structure allowing to efficiently solve the nested maximization problem---that is, \odima{to} evaluate at any point~$x \in X$ the {\em \odima{max-function}}\,\footnote{Note also that with~$f(\cdot,y)$ nonconvex, the minimax theorem does not apply, and the order of~$\min$ and~$\max$ in~\cref{opt:min-max} becomes important. %
As such, there is a crucial difference between nonconvex-concave and convex-nonconcave instances of~\cref{opt:min-max}: 
the former ones are easier, as in them  the \odima{max-function}~$\vphi(\cdot)$, cf.~\cref{def:prim-fun} is merely hard to minimize, while in the convex-nonconcave case we typically cannot even {\em evaluate} it at a point.}
\begin{equation}
\label{def:prim-fun}
\vphi(x) := \max_{y \in Y} f(x,y).
\end{equation}
The nonconvex-concave case, first addressed in~\cite{nouiehed2019solving}, is by now quite well understood. 
Generally, we lose all hope of actually solving~\cref{opt:min-max}---or minimizing~$\vphi(\cdot)$---when~$f(\cdot,y)$ is nonconvex, as we then deal with a {\em nonconvex} minimization problem even for a singleton~$Y$.
Hence, one has to be satisfied here with approximating a {\em local} minimizer of~$\vphi(\cdot)$, or a {\em local} saddle point (also called a Nash equilibrium) in~\cref{opt:min-max}---i.e., a point~$(x^{\NE}, y^{\NE}) \in X \times Y$ such that
\[
f(x^\NE,y) \le f(x^\NE, y^\NE) \le f(x,y^\NE) \;\; \text{for all} \;\; (x,y) \in X \times Y \;\; \text{in a neighborhood of} \;\; (x^\NE, y^\NE).
\]
In fact, even these two tasks can be too ambitious, and most of recent studies have been focused on the tasks of approximating a {\em first-order stationary point} in~\cref{opt:min-max} up to accuracy~$\veps > 0$, as measured by the norm of either the proximal gradient of~$f$ or the gradient of the standard Moreau envelope of~$\vphi(\cdot)$. 
(The notion of the Moreau envelope, essential in the context of our work, shall be recalled in Section~\ref{sec:problem}.)
Painting with a broad brush, these two accuracy measures turn out to be (essentially) equivalent in the nonconvex-concave case, and settling on one of them is largely a matter of personal preference; see~\cite[Section~5]{ostrovskii2020efficient} for a technical discussion of this equivalence.
In this line of research, the works~\cite{ostrovskii2020efficient,thekumparampil2019efficient,lin2020near,zhao2020primal} 
demonstrated that an~$\veps$-first-order stationary point with respect to the Moreau envelope criterion, referred to as $\veps$-FOSP from now on, can be found in~$\wt O(\veps^{-3})$ queries of the gradient of~$f$, and in~$\wt O(\veps^{-2})$ queries when~$f(x,\cdot)$ is strongly concave.\footnote{We use the standard ``big-O'' notation:~$g = O(h)$ or~$g \lsim h$ both hide an absolute constant, i.e., means that~$g \le c h$ for some~$c > 0$ uniformly over all allowed pairs of values of~$g,h$; similarly,~$g = \wt O(h)$ is a shorthand for~$g = O(h \log(h+e))$.}
The latter of these estimates was recently shown to be worst-case optimal~\cite{han2021lower,zhang2021complexity,li2021complexity}, and it is widely believed---although, to the best of our knowledge, not yet proved---that the former estimate is also tight.
Furthermore, in the absence of concavity, but assuming access to an abstract {\em maximization oracle} evaluating~$\vphi(x)$, the authors of~\cite{jin2020local} observed that an algorithm earlier proposed in~\cite{davis2018stochastic} allows to compute an~$\veps$-FOSP in~$O(\veps^{-4})$ queries of the maximization oracle and of the~$x$-gradient.

\vspace{-0.2cm}

\paragraph{Nonconvex-nonconcave problems.}
By contrast, our understanding of the general nonconvex-nonconcave setup is rather fragmentary.
One delicate issue here is that the two viewpoints on local optimality in~\cref{opt:min-max}---%
the one focusing on finding a (local) minimizer~$x^*$ of~$\vphi(x) = \max_{y \in Y} f(x,y)$, historically due to Stackelberg~\cite{von1934marktform}, %
and the game-theoretic paradigm of Nash~\cite{nash1950equilibrium}, where one aims at finding a local Nash equilibrium---might result in quite different notions of (local) optima and stationary point when~$f(x,\cdot)$ is not concave~\cite{jin2020local}.
Indeed: consider, as an illustration, the problem
\begin{equation}
\label{eq:intro-hard}
\min_{x\in\R \vphantom{[-2,2]}} \max_{y \in [-2,2]} 
\left\{ 
f(x,y) := xy - \tfrac{1}{3}y^3
\right\}.
\end{equation} 
By computing~$\vphi(x)$ one can verify that~$x^* = 1$ is a unique local (hence also global) minimizer of~$\vphi(x)$, and~$\max_{y \in [-2,2]} f(1,y)$ is attained at~$y^* = -2$; in other words,~$(1,-2)$ is a unique solution to~\cref{eq:intro-hard}. On the other hand, a unique first-order Nash equilibrium, that is~$(x^{\FNE}, y^{\FNE})$ such that
\begin{equation}
\label{eq:first-order-nash}
\lang \gx f(x^{\FNE},y^{\FNE}), x-x^{\FNE} \rang \ge 0
\;\; \text{and} \;\;
\lang \gy f(x^{\FNE},y^{\FNE}), y-y^{\FNE} \rang \le 0 
\;\; \text{for all} \;\; 
(x,y) \in X \times Y,
\end{equation}
is the point~$(0,0)$. 
Finally,~\cref{eq:intro-hard} does not have any (local) Nash equilibrium,
and these conclusions remain valid if~$x$ restricted to a compact set (e.g.,~$X = [0,4]$).
Meanwhile,~$\inf_{x \in X} \vphi(x)$ is always finite in~\cref{opt:min-max}, and is attained whenever~$\vphi(x)$ is bounded from below on~$X$ (in particular, whenever~$X$ is compact), and must satisfy the necessary condition~$\partial \vphi(x^*) \ni 0$ in terms of the weak subdifferential. 
(To make the paper self-contained, we provide background on weak convexity and weak subdifferentials in~\cref{app:weakly-convex}; for the present non-technical discussion this is irrelevant.)
Together with the above example, this observation makes a case for the \odima{Stackelberg} approach as the one better suited for the general nonconvex-nonconcave scenario, and we adhere to this approach in our work. Another argument in favor of the Stackelberg approach is that in most of the modern applications of nonconvex-nonconcave min-max optimization we mentioned, the actual {\em practical} goal is to minimize the \odima{max-function}~$\vphi(\cdot)$, not to find a local Nash equilibrium.

\vspace{0.1cm}

The second challenge posed by nonconvex-nonconcave min-max optimization is an exceptional algorithmic difficulty of {\em finding} (approximately) even a {\em local} first-order stationary point or Nash equilibrium when no additional structure is imposed on~\cref{opt:min-max}.
\odima{In particular,~\cite{daskalakis2020complexity} shows the following results assuming that~$X,Y = [0,1]^d$ and~$f$ is~$L$-smooth,~$G$-Lipschitz, and takes values in~$[-B,B]$.}
\begin{itemize}

\item[$(i)$] 
The problem of exhibiting~$(x,y) \in [0,1]^{2d}$ such that~$\|\nabla f(x,y)\| \le \frac{1}{24}$ or detecting that no such point exists,\footnote{In what follows,~$\|\cdot\|$ stands for the standard Euclidean norm on a given Euclidean space (defined by the context).} is~\textsf{FNP}-complete (in~$d$) in the regime~$L = d$,~$G = \sqrt{d}$,~$B = 1$ \cite[Theorem~4.1]{daskalakis2020complexity}.\footnote{See~\cite{papadimitriou1994complexity,daskalakis2009complexity} for a technical background on the complexity classes~\textsf{FNP},~\textsf{PPAD}---in particular, in the context of~\cref{opt:min-max}.}

\item[$(ii)$] 
\odima{If~\eqref{opt:min-max} is replaced with~$\min_{x \in X} \max_{y \in Y} f(x,y) + I_{\cP}(x,y)$ where~$I_{\cP}$ is the indicator function of a convex polytope~$\cP \subseteq X \times Y$,} then an \emph{$(\eps,\odima{\delta})${-local Nash equilibrium,}} i.e.~$(x^*,y^*) \in \cP$ such that
\[
f(x^*,y) - \eps < f(x^*,y^*) < f(x,y^*) + \eps  \;\; \forall (x,y) \in \cP: \; \max\{\|x-x^*\|, \|y - y^*\|\} \le \odima{\delta,}
\]
is guaranteed to exist in the local regime~$\odima{\delta} \le \odima{\sqrt{{2\eps}/{L}}}$; however\odima{,} exhibiting it is~\textsf{PPAD}-hard already when~$\odima{\delta} \ge \sqrt{\odima{{\eps}/{L}}}$ and~$\max\{L, G, B, 1/\eps,1/\odima{\delta}\} = O(\textsf{poly}(d))$; see~\cite[Theorems~4.3--4.4]{daskalakis2020complexity}.\footnote{\odima{This hardness result specifically applies when the constraints are coupled, i.e., $\cP = \{(x,y) \in [0,1]^{2d}: \max\{|x_1-y_1|, |x_2-y_2|\} \le \delta \}$ with~$d \ge 2$; see~\cite[Eq.~(6.1)]{daskalakis2020complexity}. 
To the best of our knowledge, extending this result to the case of uncoupled constraints (i.e., with~$\cP = X \times Y$) is still an open problem.}
}
\end{itemize}
These two negative results demonstrate that approximating even a stationary point, or a local min-max point for a large enough neighborhood, is virtually impossible without imposing additional structure on~\cref{opt:min-max}.
As such, existing {\em positive} results rely on adding this structure in one way or another. In particular, a common methodology is to ``mimic'' the case of~\odima{an MVI}~\cite{nemirovski2004prox} by imposing pseudomonotonicity, the Minty condition, or contraction of the best response mappings~\cite{facchinei2007finite,PangRaza16,dang2015convergence,mertikopoulos2018mirror,rafique2021weakly,song2021optimistic,diakonikolas2021efficient,lee2021semi,liu2019decentralized,barazandeh2021solving}. 
However, these assumptions are restrictive and rarely satisfied in modern applications; see~\cite{razaviyayn2020nonconvex} for a detailed discussion. 
In addition, they are tied to the Nashian paradigm, wheras applications tend to call for the Stackelbergian one. 
Other interesting approaches rely upon restricting the nature and level of coupling between the variables~\cite{grimmer2020landscape} or the power of the maximizing player~\cite{fiez2021minimax}. 
Yet another recent line of research advocated an alternative to the local Nash equilibria---``greedy adversarial equilibria'' that are computationally feasibile, but at the expense of a certain loss of transparency and interpretability~\cite{mangoubi2021greedy,keswani2020convergent}. 
Finally, a growing body of literature is devoted to the asymptotic behavior of algorithms~\cite{wang2019solving,domingo2020mean,mazumdar2019finding,fiez2020gradient,hsieh2020limits}, and structural results for GANs and adversarial training~\cite{heusel2017gans,mescheder2017numerics,liang2018interaction,farnia2020gans}.

Going back to the hardness results~$(i)$--$(ii)$ established in~\cite{daskalakis2020complexity}, the latter of them hints at a possibility to control the complexity of~\cref{opt:min-max} through the size of a feasible set. 
Our work explores this possibility; let us now present the main ideas behind our approach.
\vspace{-0.2cm}

\paragraph{Outline of our approach.}
Our work is motivated by a trivial observation: finding an approximate first-order stationary point of~\cref{opt:min-max} becomes a computationally feasible task when~$Y$ is {\em a singleton}, as in this case we deal with a smooth minimization problem, so an approximate stationary point can be found by running projected gradient descent. 
Furthermore, one can hope that the computational tractability is preserved when~$Y$ is not singleton, but is {\em small.} %
\odima{Indeed, letting~$\fapx_k(x,\cdot)$ be the
$k^{th}$-order Taylor approximation of $f(x,\cdot)$ for some $k\ge0$. This approach requires assuming sufficient regularity of $f$, specifically higher-order smoothness in $y$ matching the order $k$ (detailed in Section 2). One can then advocate the following two-step approach.
}

\vspace{-0.1cm}
\begin{framed}
\begin{itemize}
\vspace{-0.1cm}
\item[\proofstep{1}.]
Prove that any~$\veps$-FOSP in the surrogate min-max problem is also an~$O(\veps)$-FOSP in~\cref{opt:min-max}, due to~$\fapx_k(x,\cdot)$ approximating~$f(x,\cdot)$ over the (small) set~$Y$ well enough for our purposes.%
\item[\proofstep{2}.]
Find~$\veps$-FOSP in the surrogate problem---and thus~$O(\veps)$-FOSP in~\cref{opt:min-max}---by exploiting the structure of~$\fapx_k(x,\cdot)$ such as, e.g., linearity for~$k = 1$ or quadratic structure for~$k = 2$.
\end{itemize}
\vspace{-0.3cm}
\end{framed}
A natural question immediately arising in connection with this strategy, namely with~\proofstep{1}, is: 
\vspace{-0.1cm}
\begin{quote}
{\em How small the diameter of~$Y$ has to be in order for the reduction in step~\proofstep{1} to be valid?}
\end{quote}
\vspace{-0.1cm}
One would expect this question to have a nontrivial answer. 
Indeed, 
it seems unlikely that Taylor approximation would work over a set of a constant diameter.
As such, one may expect that
\begin{equation}
\label{eq:general-D-bound}
\diam(Y)  \lsim \veps^{\odima{\mathsf{\gamma}(k)}}
\end{equation}
allows for the reduction in~\proofstep{1} to work \odima{when the~$k$-order Taylor approximation is used,} where~\odima{$\gamma(k)$ is a nonincreasing positive function (``the more smoothness, the better'')} and the hidden constant \odima{does not depend on~$\veps$ or~$\Dy$} (but might depend on the regularity parameters of~$f$). 
Later on\odima{,} we shall verify this hypothesis, proving explicit bounds of the form~\cref{eq:general-D-bound} for arbitrary~$k$, under the natural regularity assumptions on~$f$, and showing that they are not only sufficient, but also necessary---that is, the reduction in~\proofstep{1} may fail for~$\diam(Y)$ beyond the allowed threshold. Finally, we shall also implement step~\proofstep{2} of the strategy by designing efficient algorithms for solving the surrogate min-max problem.
\vspace{-0.2cm}

\paragraph{Applications.}

In several modern machine learning and signal processing applications, it is natural to assume that~$\diam(Y)$ is relatively small. A prominent example is the task of training neural networks robust against adversarial attacks. In this context, diam(Y) corresponds to the magnitude (perturbation budget) of the attack, which must be small for the attack to remain imperceptible~\cite{madry2018towards,  zhang2019theoretically}.

Furthermore, a recent approach termed Sharpness-Aware Minimization (SAM) ~\cite{foret2020sharpness} aims to improve generalization by avoiding sharp local minima of the training loss. This results in optimization problems of the form $min_{w}max_{||u||\le r}L(w+u),$ where $L(\cdot)$ is the training loss, $u$ is the perturbation of a model $w$. Crucially, the radius $r$ (corresponding to our $D$) must be small, as the analysis often relies upon local approximations of $L(\cdot)$ near $w$. More generally, the robust design of any system against small perturbations of certain parameters leads to min-max problems of form (P) with a small maximization domain $Y$.

\vspace{-0.2cm}

\paragraph{Paper organization and summary of contributions.}
In a nutshell, our work is concerned with implementing both steps~\proofstep{1}--\proofstep{2} of the proposed approach; combined together, they result in efficient algorithms for finding an~$\veps$-FOSP in~\cref{opt:min-max}. The rest of the paper is organized as follows.

In~\cref{sec:problem}\odima{,} we set off by stating our assumptions about~$f$. 
In a nutshell, we grant Lipchitzness of the~$x$-gradient~$\gx f$ and of the order-$k$ differential tensor~$\odima{\nabla_{\y^k}^k f :=} \nabla_{\y\cdots\y}^k f$, for given~$k \in \N \cup \{0\}$, 
in accordance with our intention of using the~$k$-order Taylor expansion of~$f(x,\cdot)$.
We then recall some mathematical background on~\cref{opt:min-max}, including the definitions of the Moreau envelope and approximate first-order stationary points, and finally define the reduction in~\proofstep{1} in a rigorous way.

In~\cref{sec:upper}\odima{,} we formulate, discuss, and prove our first result: the general bound of the form~\cref{eq:general-D-bound}, holding for {\em arbitrary}~$k \in \N \cup \{0\}$ under the {\em matching} regularity assumption (i.e., with the same~$k$).
Informally, we show (cf.~\cref{th:upper-bound}) that, for any~$k \in \N \cup \{0\}$, an arbitrary~$\veps$-FOSP in the counterpart of~\cref{opt:min-max} with~$\fapx_k$ instead of~$f$ as objective function, is also an~$\veps$-FOSP in~\cref{opt:min-max} whenever
\odima{\begin{equation}
\label{eq:diam-intro}
\diam(Y) \le C\max \left\{ (k+1) \left(\frac{\veps^2}{\lam \rho_k} \right)^{\frac{1}{k+1}}, \;\; \frac{\veps}{\mu} \;\; \right\},
\end{equation}}%
\odima{where~$\Lxx,\Lxy,\rho_k$ are the uniform over~$X \times Y$ Lipschitz constants of~$\gx f(\cdot,y), \gx f(x,\cdot), \nabla_{\y \dots \y}^k f(x,\cdot)$ respectively,
and~$C$ is a numerical constant.} 
In particular, this implies~\cref{eq:general-D-bound} with~\odima{$\gamma(k) = \frac{2}{\max\{k,1\}+1}$ once~$\veps$ is below a certain threshold level, whose value is defined by the regularity parameters~$\lambda, \mu,\rho_k$.}

In~\cref{sec:lower}\odima{,} we present our second series of results, showing that the bound~\cref{eq:diam-intro} is nearly tight. 
We do this by constructing specific instances of~\cref{opt:min-max}, carefully choosing the center~$\by \in Y$ of the Taylor approximation, and then exhibiting a point~$x \in X$ which is {\em stationary} in the approximated problem, while {\em not} being an~$\veps$-FOSP in~\cref{opt:min-max}. 

In~\cref{sec:algos}\odima{,} we carry out step~\proofstep{2} of the strategy. To this end, we suggest three algorithms based on the Taylor approximation with~$k \in \{0,1,2\}$. For~$k = 0$ and~$k = 1$, the algorithms are based on gradient descent and \odima{gradient descent-ascent respectively}; 
the resulting oracle complexity is~$O(\veps^{-2})$, and~$\diam(Y) \lsim \veps$ is allowed, cf.~\cref{th:algo-0,th:algo-1}. 
For~$k = 2$, our algorithm allows for a larger diameter~$O(\veps^{2/3})$ as per~\cref{eq:diam-intro}. 
Yet, this improvement has a price:~$Y$ must be a Euclidean ball, we \odima{need} access to higher-order derivatives of~$f$, and \odima{the complexity deriorates} to~$O(\veps^{-{\sfrac{13}{3}}})$; see~\cref{th:algo-2}.%
\vspace{-0.2cm}

\paragraph{Notation.}
We use the standard~$O(\cdot)$ and~$\wt O(\cdot)$ notation: given two functions~$g,h > 0$ we use~$g = O(h)$ and~$g \lsim h$ as shorthands for saying that~$g \le c h$ for some~$c > 0$ uniformly over all simultaneously allowed pairs of values of~$g,h$; similarly,~$g = \wt O(h)$ is a shorthand for~$g = O(h \log(h+e))$. Symbol~$c$ denotes a numerical constant whose value is unimportant and might change from line to line. We use {\odima the} abridged notation~$\gx f(x,y),\gy f(x,y)$ for the partial gradients of~$f$ in the first and second argument evaluated at some~$(x,y) \in X \times Y$, and similarly for higher-order tensors of partial derivatives (see Section~\ref{sec:problem}). 
$\|\cdot\|$ stands for the standard Euclidean norm on a given Euclidean space (clear from context and \odima{identified with its dual}) when the argument is a vector, and for the induced operator norm when the argument is a~$k$-tensor. 
Other notation shall be introduced when necessary.

Most of our results are rather technical, and we defer the proofs to the appendix. An exception is made for~\cref{th:upper-bound}, as its proof is not so technical and illuminates the mechanism behind~\cref{eq:diam-intro}.

\section{Standing assumptions, definitions, and technical background}
\label{sec:problem}

We shall focus on~\cref{opt:min-max} assuming that~$X,Y$ are two convex sets with \odima{nonempty} interior\odima{s} in the corresponding Euclidean spaces~$\XX, \YY$;~$Y$ is compact and its Euclidean diameter is bounded by~$\Dy$,
\[
\Dy \ge \diam(Y) := \max_{y,y' \; \in \; Y} \|y'-y\|.
\]
Let~$\|\cdot\|$ be the (Euclidean) operator norms on the spaces of multilinear forms on~$\XX^{\otimes i} \times \YY^{\otimes j}$ for~$0 \le i \le 2$ and~$0 \le j \le k$, where~$k$ is a nonnegative integer.
We grant two assumptions on the regularity of~$f$.
\begin{assumption}[First-order smoothness in~$x$]
\label{ass:gradx}
The partial gradient~$\gx f(x,y)$ exists and is\odima{~$(\Lxx,\Lxy)$-Lipschitz on~$X \times Y$ for some~$\Lxx > 0$ and~$\Lxy \ge 0$: in other words, for any~$x,x' \in X$ and~$\forall y,y' \in Y,$}
\begin{equation}
\label{eq:gradx-lip} 
\|\gx f(x',y') - \gx f(x,y) \| \le \Lxx \|x' - x\| + \Lxy \|y' - y\|.
\tag{$\mathsf{A1}$}
\end{equation}

\begin{comment}
\begin{subequations}
\renewcommand{\theequation}{$\mathsf{A1}$.\alph{equation}}
\begin{align} 
\|\gx f(x',y) - \gx f(x,y) \| &\le \Lxx \|x' - x\|, 
%
\label{ass:gradx-x}\\ 
\|\gx f(x,y') - \gx f(x,y) \| &\le \Lxy \|y' - y\|.
%
\label{ass:gradx-y}
\end{align}
\end{subequations}
\end{comment}

\end{assumption}
\begin{assumption}[$k$-order smoothness in~$y$]
\label{ass:tensy}
For a given~$k \in \N \cup \{0\}$, 
the tensor~$\Ty{k} f(x,y)$ %
of~$k^\textup{th}$-order partial derivatives of~$f$ in~$y$ exists and is\odima{~$(\cLykx, \cLyky)$-Lipschitz: for any~$x,x' \in X$ and~$y,y' \in Y$,}
\begin{equation}
\label{eq:tensy-lip}
\|\Ty{k} f(x',y') - \Ty{k} f(x,y) \| \le \cLykx \|x' - x\| + \cLyky \|y' - y\|. 
\tag{$\mathsf{A2}$} 
\end{equation}
Moreover, the tensor~$\Txy{k+1}{k} f$ incorporating the partial derivatives of order~$k$ in~$y$ and~$1$ in~$x$ (in this order) exists and is\odima{~$\cLykxx$-Lipschitz in~$x$ for some~$\cLykxx \ge 0$; in other words, for any~$x',x \in X$ and~$y \in Y$,}
\begin{equation}
\label{eq:tensy-xx}
\|\Txy{k+1}{k} f(x',y) - \Txy{k+1}{k} f(x,y) \| \le \cLykxx \|x' - x\|.
\tag{$\mathsf{A3}$}
\end{equation}

%
%
%
\begin{comment} 
\begin{subequations}
\renewcommand{\theequation}{$\mathsf{A2}$.\alph{equation}}
\begin{align}
\|\Ty{k} f(x,y') - \Ty{k} f(x,y) \| &\le \cLyky \|y' - y\|, 
\label{ass:tensy-y}\\
%
\|\Txy{k+1}{k} f(x,y) \| &\le \cLykx,
\label{ass:tensy-x}\\
\|\Txy{k+1}{k} f(x',y) - \Txy{k+1}{k} f(x,y) \| &\le \cLykxx \|x' - x\|, 
\label{ass:tensy-xx}
\end{align}
\end{subequations}
\end{comment}
%
%
%
%
%

\end{assumption}

\odima{Assumption~\ref{ass:tensy} merits some discussion.
With~$k = 1$, we recover the classical smoothness assumption, Lipschitzness of the gradient of~$f$.
However, in our setup it makes sense to assume higher regularity in~$y$ (cf.~\eqref{eq:tensy-lip}) as this is the variable in which we perform the Taylor expansions (cf.~\eqref{opt:min-max-apx}). 
Additionally,~\eqref{eq:tensy-xx} allows to control the mismatch between the~$x$-gradients of~$f(x,y)$ and~$\fapx_k(x,y)$ at any point~$x$, and implies that~$\gx \fapx_k(\cdot,y)$ is Lipschitz with an adjusted parameter; see~Lemma~\ref{lem:gx-lip}.}

\odima{Let us now more thoroughly discuss the cases~$k \in \{0,1,2\}$ in which we propose efficient algorithms.}
\begin{itemize}
\item
When~$k = 0$,~\eqref{eq:tensy-xx} follows from~\eqref{eq:gradx-lip} with~$\tau_0 = \lam$, and so does not give any extra restrictions. %
As for~\eqref{eq:tensy-lip}, it then requires that~$f$ is~$\rho_0$-Lipschitz in~$y$ and~$\sig_0$-Lipschitz in~$x$. 
In fact, such Lipschitzness conditions are not necessary for our approximation results to hold, although they can possibly improve the approximation bounds in~\cref{th:upper-bound} in the case~$k = 0$, 
which is of marginal practical interest. 
Meanwhile, the Lipschitzness in~$x$ is used in one of the algorithms proposed in~\cref{sec:algos}.
Even so, we have the variational bounds~$\sigma_0 \le \min\{\lam \, \diam(X), \mu\Dy\}$ provided that~$X$ contains in its interior a first-order stationary point of~$f(\cdot,y)$ for any~$y \in Y$. %

\item 
When~$k = 1$, condition~\eqref{eq:tensy-lip} reduces to the Lipschitzness of the partial gradient~$\gy f$, namely
\begin{comment}
\begin{equation}
%
\begin{aligned}
\|\gy f(x,y') - \gy f(x,y) \| &\le \Lyy \|y' - y\|%
\quad \text{and} \quad
%
\|\hxy f(x,y)\| \le \Lxy
\end{aligned}
\end{equation}
\end{comment}
\begin{equation}
\label{eq:grady-lip}
\|\gy f(x',y') - \gy f(x,y) \| \le \rho_1 \|y' - y\| + \sigma_1 \|x' - x\|.
\end{equation}
Moreover, due to~\eqref{eq:tensy-xx} we can assume, without loss of generality, that~$\sigma_1 = \mu$.
Meanwhile,~\eqref{eq:tensy-xx} is a second order condition: it implies that~$\hxx f$ is differentiable in~$y$ almost everywhere on~$X$, and allows to preserve weak convexity in~$x$ after the Taylor expansion in~$y$. 
Note that~\eqref{eq:tensy-xx} holds with~$\tau_1 = 0$ (in fact, with~$\tau_k = 0$ for~$k \ge 1$) \odima{when the objective is {\em bilinearly coupled} (BC):}
\begin{equation}
\label{eq:bilinear-coupling}
f(x,y) = g(x) + \lang Ax, y \rang + h(y).
\tag{$\mathsf{BC}$}
\end{equation}
%
%
%
%
\begin{comment}
\begin{equation}
\label{ass:hessy}
\|\hyy f(x,y') - \hyy f(x,y) \| \le \rho_2 \|y' - y\| 
\quad \text{and} \quad
%
\| \nabla^3_{\x\y\y} f(x,y) \| \le \sigma_2,
\end{equation}
and the first of these bounds translates to~$\|\nabla^3_{\y\y\y} f(x,y)\| \rhoy$ almost everywhere on~$X \times Y$. 
\end{comment}

\item
When~$k = 2$, condition~\eqref{eq:tensy-lip} reduces to the Lipschitzness of the partial Hessian~$\hyy f$, namely
\begin{equation}
\label{eq:hessy-lip}
\|\hyy f(x',y') - \hyy f(x,y) \| \le \rho_2 \|y' - y\| + \sigma_2 \|x' - x\|.
\end{equation}
Clearly, under~\eqref{eq:bilinear-coupling} for~$k \ge 2$ one has~$\sigma_k = 0$, and also~$\rho_k = 0$ if, in addition,~$h(y)$ is quadratic.
\end{itemize}
Finally, we grant a mild regularity assumption allowing to differentiate~$\Txy{k+1}{k}f$ in~$x$ under the integral (we use Lebesgue measures on~$\XX$ and~$\YY$); 
it is trivially satisfied if~$\Txxy{k+2}{k} f$ exists everywhere. %
\begin{assumption}
\label{ass:measurable}
Define the set-valued map~$x \mapsto Y_x' \subseteq Y$ where~$Y_x'$ is the set of~$y$ for which~$\Txxy{k+2}{k} f(x,y)$ does not exist. 
Its graph, i.e., the subset of~$X \times Y$ defined as~$\{(x, y): x \in X, \, y \in Y_x'\}$, is measurable.
\end{assumption}

\paragraph{Moreau envelope and the notion of FOSP.}
In this work we use a stationarity criterion based on the Moreau envelope of the \odima{max-function}. 
To define it formally, we first have to remind some standard definitions; readers familiar with the notion of Moreau envelope for weakly convex functions can safely skip this part, while those looking for more details may refer to~\cite{davis2018stochastic,davis2019stochastic} or~\cite[Sec.~5]{ostrovskii2020efficient}.
Recall that~$\vphi(x) := \max_{y \in Y} f(x,y)$ is called the {\em \odima{max-function}} for~\cref{opt:min-max}, cf.~\cref{def:prim-fun}. 
\begin{comment}
\begin{equation}
%
\vphi(x) := \max_{y \in Y} f(x,y).
\end{equation}
\odima{Due to~$f$ being nonconvex in~$x$, problem~\cref{opt:min-max} is generally intractable in high dimension even when~$Y$ is a singleton (see, e.g.,~\cite[Chap.~1]{nesterov2013introductory}).
A popular remedy to this issue, alternative to imposing structural constraints, is to settle for first- or second-order stationary points instead of global optima.}
\end{comment}
Typically~$\vphi$ is non-smooth (unless~$f(x,\cdot)$ is strictly concave), and so its gradient is not defined on~$X$.
However, under~\cref{ass:gradx}~$\vphi$ is {\em $\Lxx$-weakly convex}---that is, for any~$x \in X$ the function given by~$\vphi(\cdot) + \frac{\Lxx}{2} \|\cdot \|^2$ is convex, and~$(\bar\Lxx - \Lxx)$-strongly convex if regularization is with~$\frac{\bar\Lxx}{2} \|\cdot \|^2$ for~$\bar\Lxx > \Lxx.$
(In order to streamline the presentation while keeping the paper self-contained, we provide necessary background on weakly convex functions and weak subdifferentials in~\cref{app:weakly-convex}.)
\odima{This leads to the following definition.}

\begin{definition}[Moreau envelope]
\label{def:moreau}
Let~$\phi: X \to \R$ be~$\Lxx$-weakly convex. Given an~$\bar\Lxx > \Lxx$, the function %
\begin{equation}
\label{eq:moreau}
\phi_{\bar\Lxx}(x) := \min_{u \in X} 
\big\{
\phi(u) + \tfrac{\bar\Lxx}{2} \|u-x\|^2
\big\}
\end{equation}
is called the~{\em $\bar\Lxx$-Moreau envelope} of~$\phi$, and the unique solution~$x_{\phi/{\bar\Lxx}}^+(x)$ to~\cref{eq:moreau} is called the {\em proximal mapping} of~$x$ (corresponding to~$\phi$ with stepsize~${1}/{\bar\lam}$).
\end{definition}
It is well known (see, e.g.,~\cite[Lemma~2.2]{davis2019stochastic}) that~$\vphi_{\bar\Lxx}$ is~$C^1$-smooth when~$\bar\Lxx > \Lxx$, and moreover it has~$O(\Lxx)$-Lipschitz gradient whenever~$\bar\Lxx = (1+c)\Lxx$ for~$c > 0$.
The standard practice (see, e.g.,~\cite{davis2018stochastic}) is to simply choose~$\bar\Lxx = 2\Lxx$ and use the gradient norm~$\|\nabla \vphi_{2\Lxx}(\cdot)\|$ as the accuracy measure. Indeed, from the first-order optimality conditions for~\cref{eq:moreau} one can easily conclude (see, e.g.,~\cite[p.~4]{davis2019stochastic}) that
\begin{equation}
\label{eq:moreau-explicit}
\nabla\vphi_{2\lam}(x)= 2\Lxx (x - x^+) \in \partial (\vphi + I_X)(x^+),
\end{equation}
where~$x^+ = x^+_{\vphi/({2\lam})}(x)$ for arbitrary~$x \in X$,~$I_X$ is the indicator function of~$X$, and~$\partial(\cdot)$ is the Fr\'echet (or weak) subdifferential; see~\cref{app:weakly-convex-basic} for details. As a result, in the~$x$-unconstrained case ($X = \XX$), the inequality~$\|\nabla \vphi_{2\lam}(x)\| \le \veps$ for some~$x \in X$ implies the existence of a point~$x^+ = x^+(x)$ within~$O(\veps)$ distance from~$x$, such that~$\vphi$ has a subgradient at~$x^+$ with the norm at most~$\veps$. 
A similar result holds in the constrained case if the subgradient norm of~$\vphi$ is replaced with an appropriate inaccuracy measure based on projection onto~$X$; see~\cite[Prop.~5.1]{ostrovskii2020efficient} also given as~\cref{prop:moreau-to-primal} in our paper. 
These results motivate us to introduce the following definition.

\begin{definition}[Approximate first-order stationary points]
\label{def:accuracy}
The point~$x \in X$ is called~{\em $(\veps,\Lxx')$-first-order stationary ($(\veps,\Lxx')$-FOSP)} for~\cref{opt:min-max} 
if~$\|\nabla \vphi_{\lam'} (x) \| \le \veps$.
\end{definition}

\odima{Note that an~$(\veps,\lambda')$-FOSP is guaranteed to exist in~\eqref{opt:min-max} for any~$\veps > 0$ and~$\lambda' > \lambda$. 
Quantifying first-order stationarity in~\eqref{opt:min-max} in terms of the Moreau-envelope criterion is quite natural. 
Indeed: this criterion, intially proposed in the context of weakly convex optimization~\cite{davis2018stochastic,davis2019stochastic}, has been subsequently adopted in nonconvex-concave min-max problems~\cite{thekumparampil2019efficient,ostrovskii2020efficient,lin2020near}, a subclass of~\eqref{opt:min-max} that already lacks strong duality. 
Historically, the key motivation for its use in the nonconvex-concave context is the above-mentioned guarantee that, granted Assumption~\ref{ass:gradx}, any~$(\veps,2\lambda)$-stationary point is~$O(\veps)$-close to an~$\veps$-stationary point of~$\varphi(\cdot)$, but in reality this guarantee holds due to the weak convexity on~$\varphi(\cdot)$, and thus carries out to the nonconvex-concave setup.
On the other hand, while in the nonconvex-concave case one can convert an~$(\veps,2\lambda)$-FOSP into an~$(\ex,\ey)$-first-order Nash equilibrium (NE)~(informally, a point satisfying the two inequalities in~\eqref{eq:first-order-nash} approximately; see~\cite[Def.~2.1]{ostrovskii2020efficient}), and in this sense the two notions are equivalent, this is not the case anymore when~$f(x,\cdot)$ is nonconcave: as discussed in Section~\ref{sec:intro}, in some instances of~\eqref{opt:min-max} even {\em exact} first-order NE have no relation to even {\em local} saddle points (which might not even exist), nor to the critical points of~$\varphi(\cdot)$.}

\vspace{-0.4cm}
\paragraph{Taylor expansion and the surrogate problem.}
Let us formally define the Taylor expansion of~$f(x,\cdot)$ to be used from now on. %
Recall that our approach relies on replacing~$f(x,\cdot)$ with its~$k$-order Taylor approximation~$\fapx_k(x,\cdot)$ for a non-negative integer~$k$ and a fixed ``center'' point~$\by \in Y$:
\begin{equation}
\label{def:fk}
\fapx_k(x,y) = \sum_{j = 0}^k \frac{1}{j!}\Ty{j} f(x,\by) \, [(y-\by)^j].
\tag{$\mathsf{TE}$}
\end{equation}
Here~$T[y^j]$ is the diagonal evaluation of tensor~$T : \YY^{\otimes j} \to \R$ on~$y \in \YY$---that is,~$T[y^j] := T[y,...,y]$.
In particular, in the most practically important cases~$k \in \{0, 1, 2\}$,~\cref{def:fk} reduces to~$\fapx_{0}(x,y) = f(x,\by)$,
\begin{equation}
\label{eq:0-1-models}
\begin{aligned}
\fapx_{1}(x,y) = f(x,\by) + \lang \gy f(x,\by), y-\by \rang, \quad\quad 
\fapx_{2}(x,y) = \fapx_1(x,y) + \tfrac{1}{2} \langle y-\by, \hyy f(x,\by) (y-\by)\rangle.
\end{aligned}
\end{equation}
Note that for~$k \le 2$ we can efficiently maximize~$\fapx_k(x,\cdot)$: indeed, ~$\fapx_{0}(x,\cdot) \equiv f(x,\by)$ is constant,~$\fapx_{1}(x,\cdot)$ is affine, and~$\fapx_{k}(x,\cdot)$ for~$k = 2$ is a (nonconcave) quadratic that can be globally maximized using the gradient oracle~$\gy \fapx_2(x,\cdot)$ when~$Y$ is a ball~\cite{carmon2020first}. 
From now on, we focus on the {\em surrogate problem}
\begin{equation}
\label{opt:min-max-apx}
\min_{x \in X} \max_{y \in Y} \fapx_k (x,y)
\tag{${\mathsf{P}}_k$}
\end{equation}
and denote with~$\vphiapx(x) := \max_{y \in Y} \fapx_k(x,y)$ the corresponding \odima{max-function}, with~$k$ omitted for brevity.
\begin{comment}
that has an explicit global structure in~$y$, depending on the approximation order. 
For small~$k$, the nested maximization problem in~\cref{opt:min-max-apx} simplifies dramatically compared to that in~\cref{opt:min-max}: in particular, for~$k \le 1$ it can be solved explicitly, and for~$k = 2$ (and provided~$Y$ is a ball) one can use an efficient algorithm from~\cite{carmon2020first}. 
\end{comment}
%
%
%
%

%
\section{Upper bounds on the admissible diameter}
\label{sec:upper}

We are now in the position to rigorously formulate the question we first posed in the introduction:
\begin{framed}
\vspace{-0.4cm}
\begin{quote}
{\em For~$k \ge 0$, what~$\Dy$ allows to guarantee the existence of~$\Lxxp \lsim \Lxx$ and~$c > 0$ such that any~$(c\veps,2\Lxxp)$-FOSP for~\eqref{opt:min-max-apx}, %
regardless of the choice of~$\by$, is~$(\veps,2\Lxxp)$-FOSP for~\eqref{opt:min-max}?} %
\end{quote}
\vspace{-0.4cm}
\end{framed}
In this section, our goal is to answer this question, and such an answer will be given in~\cref{th:upper-bound}.
But before, let us clarify a subtle point about it: namely, note that we should expect~$c < 1$ and~$\Lxxp > \Lxx$.
Indeed, replacing~$f$ with its Taylor approximation is likely to cause some deterioration of accuracy as measured by the gradient norm of the Moreau envelope, so we cannot expect, say,~$(2\veps,2\Lxxp)$-FOSP for~\eqref{opt:min-max-apx} to also be an~$(\veps,2\Lxxp)$-FOSP for~\eqref{opt:min-max}. %
Similarly, considering~$\Lxxp = \Lxx$ would imply that the surrogate \odima{max-function}~$\vphiapx(\cdot)$ is~$\Lxx$-weakly convex (cf.~\cref{def:moreau}), but in fact such a guarantee is not available. 
Fortunately, it is not hard to prove (cf.~\cref{lem:gx-lip} below) that under the regularity assumptions granted in~\cref{sec:problem} and a weaker bound on~$\Dy$ than the one imposed in~\cref{th:upper-bound},
the function $\vphiapx(\cdot)$ is~$\Lxxp$-weakly convex with~$\Lxxp = (1+o(\veps))\Lxx$.

%

%
%

%

\begin{comment}
i.e., find a point~$x^*$ at which~$\|\nabla \vphiapx_{2\Lxx}(x^*)\| \le \veps$ with~$\vphiapx_{2\Lxx}(\cdot)$ being the Moreau envelope of the surrogate \odima{max-function}
\odima{TODO: correct for~$\Lxxp$ and discuss.}
%
%
$
\vphiapx(x) := \max_{y \in Y} \fapx(x,y).
$
%
\end{comment}
%
%
%
%
%
%
%
%

\begin{comment}
Note that when~$Y$ is a singleton, the required guarantee holds for any~$k \ge 0$,~$\veps \ge 0$,~$\Lxx > 0$, with~$\veps' = \veps$ and~$\lam' = \lam$, 
and this tells us that~$(\veps',2\Lxx')$-FOSP with~$\veps' > \veps$ or~$\Lxx' < \Lxx$ cannot suffice whenever~$\Dy > 0$---i.e., we cannot ``cheat'' by degrading the target accuracy or increasing the stepsize in the Moreau envelope when passing to~\cref{opt:min-max}.
In particular, the \odima{max-function}~$\vphiapx(x) = \max_{y \in Y} \fapx_k(x,y)$ in~\cref{opt:min-max-apx} is not generally~$\lam$-weakly convex,~\cref{opt:min-max-apx} is not even guaranteed to {\em have any}~$(\veps,\Lxx)$-FOSP.
However, with small enough~$\Dy$ we can hope to guarantee that~\cref{opt:min-max-apx} is ``similar'' to~\cref{opt:min-max}: its \odima{max-function} is~$\lam'$-weakly convex with~$\lam' = O(\lam)$, and the guarantee~$\|\nabla\vphiapx_{2\lam'}(x^*)\| \le \veps'$ in terms of its Moreau envelope translates to the guarantee~$\|\nabla\vphi_{2\lam}(x^*)\| \le \veps$ for~\cref{opt:min-max} with~$\veps$ not much larger than~$\veps'$.\\
\end{comment}

%
%

%
To prepare the ground for proving Theorem~\ref{th:upper-bound}, we first obtain the bounds for approximating~$f$ with~$\fapx_k$ uniformly over~$(x,y) \in X \times Y$, in terms of the function value, the~$x$-gradient, and~$x$-Hessian. 
\begin{lemma}
\label{lem:fval-err}
Grant~\cref{eq:tensy-lip} with~some~$k \ge 0$ and possibly~$\cLykx = \infty$.
Then for any~$x \in X,$~$y \in Y$ one has
\[
| f(x,y) - \fapx_k(x,y) |
\le
\frac{\cLyky \Dy^{k+1}}{(k+1)!}.
\]
\end{lemma}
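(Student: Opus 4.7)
This is a standard single-variable Taylor remainder argument applied to the scalar auxiliary function obtained by restricting $f(x,\cdot)$ to the segment joining $\by$ and $y$. The Lipschitz bound on the top-order derivative (given by \cref{eq:tensy-lip}) controls the remainder.

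The plan is to fix arbitrary $x \in X$ and $y \in Y$ and introduce the scalar function $g(t) := f(x, \by + t(y-\by))$ for $t \in [0,1]$, which is well-defined by convexity of $Y$. By the chain rule combined with \cref{ass:tensy}, the derivatives $g^{(j)}$ exist up to order $k$ and admit the closed form $g^{(j)}(t) = \Ty{j} f(x, \by + t(y-\by))[(y-\by)^j]$. In particular, $g^{(j)}(0) = \Ty{j} f(x,\by)[(y-\by)^j]$, so the Taylor polynomial of $g$ at $t=0$ evaluated at $t=1$ is exactly $\fapx_k(x,y)$, while $g(1) = f(x,y)$.

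Next, I would invoke Taylor's theorem with integral remainder to write, for $k \ge 1$,
\begin{equation*}
f(x,y) - \fapx_k(x,y) = \int_0^1 \frac{(1-t)^{k-1}}{(k-1)!} \bigl( g^{(k)}(t) - g^{(k)}(0) \bigr) \, dt.
\end{equation*}
Using the definition of the operator norm on $k$-linear forms and \cref{eq:tensy-lip} applied with the second argument running along the segment, one estimates
\begin{equation*}
| g^{(k)}(t) - g^{(k)}(0) | \le \| \Ty{k} f(x, \by + t(y-\by)) - \Ty{k} f(x,\by) \| \cdot \|y-\by\|^k \le \rho_k \, t \, \|y-\by\|^{k+1}.
\end{equation*}
Plugging this into the integral and computing $\int_0^1 (1-t)^{k-1} t \, dt = 1/(k(k+1))$ yields the bound $\rho_k \|y-\by\|^{k+1}/(k+1)!$, which is at most $\rho_k \Dy^{k+1}/(k+1)!$ since $\|y-\by\| \le \Dy$.

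The case $k = 0$ must be handled separately (the integral form above is vacuous): here $\fapx_0(x,y) = f(x,\by)$ and \cref{eq:tensy-lip} degenerates to $\rho_0$-Lipschitzness of $f(x,\cdot)$, which directly gives $|f(x,y) - f(x,\by)| \le \rho_0 \|y-\by\| \le \rho_0 \Dy$. No obstacle is expected; the only care needed is to verify that \cref{ass:tensy} legitimately supplies both the existence of intermediate-order derivatives (so that $g$ is $C^k$ on $[0,1]$) and the Lipschitz continuity of $g^{(k)}$, so that the integral form of Taylor's theorem applies without additional smoothness hypotheses.
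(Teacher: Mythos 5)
Your proof is correct and follows essentially the same route as the paper's: restrict $f(x,\cdot)$ to the segment $[\by,y]$, apply the integral form of the one-dimensional Taylor remainder, and bound it via \cref{eq:tensy-lip}. The only cosmetic difference is that you write the remainder as $\int_0^1 \frac{(1-t)^{k-1}}{(k-1)!}\bigl(g^{(k)}(t)-g^{(k)}(0)\bigr)\,dt$ and use the Lipschitz bound directly, whereas the paper integrates once more (using absolute continuity of the $k$-th derivative) and bounds $\|\Ty{k+1}f\|\le\rho_k$ almost everywhere; the two forms are related by integration by parts and yield the identical constant.
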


\begin{lemma}
\label{lem:gx-err}
Let~$k \ge 0$ be given. Grant~\cref{eq:gradx-lip} if~$k = 0$ (possibly with~$\Lxx = \infty$) and grant~\cref{eq:tensy-lip} (possibly with~$\cLyky = \infty$, but with~$\Ty{k}f(\cdot,y)$ absolutely continuous~$\forall y \in Y$). 
\mbox{Then for any~$x \in X,$ $y \in Y$ one has}
\[
\| \gx f(x,y) - \gx \fapx_k(x,y) \| 
\;\le\;  \odima{\ind_{k = 0} \min\{\Lxy \Dy, 2\sigma_0\}  \; + \; \ind_{k > 0} \frac{2\cLykx \Dy^k}{k!}.}
\]
\end{lemma}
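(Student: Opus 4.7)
\textbf{Proof plan for \cref{lem:gx-err}.} The case $k=0$ is essentially two one-line arguments. Since $\fapx_0(x,y) = f(x,\by)$, the quantity to bound is $\|\gx f(x,y) - \gx f(x,\by)\|$. One bound comes directly from \eqref{eq:gradx-lip}: $\|\gx f(x,y) - \gx f(x,\by)\| \le \mu \|y-\by\| \le \mu \Dy$. The other comes from the observation that \eqref{eq:tensy-lip} with $k=0$ asserts $|f(x',y) - f(x,y)| \le \sigma_0 \|x'-x\|$, i.e., $f(\cdot,y)$ is $\sigma_0$-Lipschitz for every fixed $y$, so that $\|\gx f(x,\cdot)\| \le \sigma_0$ wherever the gradient exists; a triangle inequality then yields $2\sigma_0$. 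Taking the minimum gives the stated bound.

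For $k \ge 1$, the plan is to reinterpret $\gx \fapx_k(x,y)$ as a Taylor polynomial. Since $\by$ is fixed and the only $x$-dependence in $\fapx_k(x,y)$ is through the coefficients $\Ty{j} f(x,\by)$, differentiating in $x$ and commuting $\gx$ with $\Ty{j}$ yields
\begin{equation*}
\gx \fapx_k(x,y) = \sum_{j=0}^{k} \frac{1}{j!}\, \Txy{1}{j} f(x,\by)[(y-\by)^j].
\end{equation*}
Defining $g_x(y) := \gx f(x,y)$ (vector-valued), the right-hand side is precisely the order-$k$ Taylor polynomial of $g_x$ at $\by$, since $\Ty{j} g_x(\by) = \Txy{1}{j} f(x,\by)$. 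The main technical point here is to justify the interchange of $\gx$ with $\Ty{j}$ for $j = 0,\dots,k$: this is exactly where the absolute-continuity hypothesis on $\Ty{k} f(\cdot,y)$, together with \cref{ass:measurable}, is used---absolute continuity gives that $\gx \Ty{k} f = \Txy{1}{k} f$ almost everywhere, and for lower orders the same conclusion is obtained by integration (lower-order coefficients inherit regularity from higher-order ones).

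It then remains to bound the order-$k$ Taylor remainder of $g_x$ on $Y$. The natural move would be to estimate this remainder by $\tfrac{L \Dy^{k+1}}{(k+1)!}$ with $L$ a Lipschitz constant of $g_x^{(k)} = \Txy{1}{k} f(x,\cdot)$ in $y$, but no such bound is available from our hypotheses (this would require $\Txy{1}{k+1} f$). Instead, I would split:
\begin{equation*}
g_x(y) - \sum_{j=0}^{k}\frac{1}{j!} g_x^{(j)}(\by)[(y-\by)^j]
 = \Bigl(g_x(y) - \sum_{j=0}^{k-1}\frac{1}{j!} g_x^{(j)}(\by)[(y-\by)^j]\Bigr) - \frac{1}{k!}\, g_x^{(k)}(\by)[(y-\by)^k].
\end{equation*}
For the first parenthesized term, the integral-form Taylor remainder gives a bound $\tfrac{\Dy^{k}}{k!}\sup_{y\in Y}\|g_x^{(k)}(y)\|$; for the second, the same quantity bounds it directly. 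Finally, $\|g_x^{(k)}(y)\| = \|\Txy{1}{k} f(x,y)\| \le \sigma_k$ uniformly, because $\sigma_k$ is the Lipschitz constant of $\Ty{k} f$ in $x$ by \eqref{eq:tensy-lip}. Summing the two terms by the triangle inequality produces $\tfrac{2\sigma_k \Dy^{k}}{k!}$, which is the claimed bound.

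The main obstacle is conceptual rather than computational: deciding not to chase the Taylor-remainder scaling $\Dy^{k+1}$ (which our hypotheses do not support) and instead accept the weaker $\Dy^{k}$ scaling obtained by treating the top-order Taylor term as a separate ``error'' piece. The regularity technicalities around interchanging $\gx$ and $\Ty{j}$ are standard but should be flagged, as they rely explicitly on the absolute-continuity assumption in the hypothesis.
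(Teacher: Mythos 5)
Your proof is correct and, at its core, is the same argument as the paper's: both avoid the $\Dy^{k+1}$-scaled remainder (which would require one more $y$-derivative of $\gx f$) by isolating the top-order Taylor coefficient, and your split of the order-$k$ remainder of $g_x$ into the order-$(k-1)$ integral remainder minus the $k$-th term is exactly the paper's integration-by-parts identity \eqref{eq:taylor-rem-int} in disguise; in both cases the factor $2\cLykx\Dy^k/k!$ comes from bounding the order-$(k{+}1)$ mixed tensor by $\cLykx$ at two points of the segment $[\by,y]$, and the $k=0$ case is handled identically. The one substantive difference is the order of operations. The paper first derives the scalar identity $f-\fapx_k=\int_0^1\tfrac{(1-t)^{k-1}}{(k-1)!}\big(\Ty{k} f(x,y_t)-\Ty{k} f(x,\by)\big)[(y-\by)^k]\,dt$ and only then differentiates in $x$, so it only ever needs the mixed derivatives in the fixed order ``$k$ times in $y$, then once in $x$''---precisely the object $\Txy{k+1}{k}f$ that \eqref{eq:tensy-lip}--\eqref{eq:tensy-xx} and the absolute-continuity hypothesis supply. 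Your route differentiates first and identifies $\gx\fapx_k(x,\cdot)$ with the Taylor polynomial of $g_x=\gx f(x,\cdot)$, which requires the symmetry $\gx\Ty{j}f=\Ty{j}\gx f$ for every $j\le k$; under the lemma's deliberately minimal hypotheses this commutation is not free (Lipschitzness of $\Ty{k}f$ in $x$ gives only a.e.\ existence of one of the two mixed partials, not their equality), and your ``obtained by integration'' sketch for the lower orders would need to be fleshed out. Differentiating last, as the paper does, sidesteps the commutation issue entirely---this is presumably why the assumptions are phrased in terms of $\Txy{k+1}{k}f$ rather than $\Ty{k}\gx f$---so if you keep your ordering you should either add a Clairaut-type regularity hypothesis or restructure as in the paper.
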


\odima{
\begin{lemma}
\label{lem:gx-lip}
Given~$k \ge 0$, grant~\cref{ass:gradx,ass:tensy,ass:measurable}.
Then~$\gx \fapx_k(\cdot, y)$ is~$\Lxxp_k$-Lipschitz,~$\forall y \in Y$, with
\begin{equation}
\label{eq:gx-lip}
\Lxxp_k
:= \Lxx +  \ind_{k > 0} \frac{2\cLykxx \Dy^k}{k!}.
\end{equation}
Moreover, the variation of~$\gx \fapx_k(x,\cdot)$ over~$Y$ satisfies
$
\displaystyle\sup_{x \in X; \; y,y' \in Y} \| \gx \fapx_k(x,y') - \gx \fapx_k(x,y) \| \le \ind_{k > 0} \, \Lxyp_k \Dy,
$\vspace{-0.2cm}
\begin{equation}
\label{eq:gx-vary}
\quad\Lxyp_k := \Lxy +  \ind_{k > 0} \frac{2\cLykx \Dy^{k-1}}{k!}.
\end{equation}
\end{lemma}}
Note that an immediate corollary of~\cref{eq:gx-lip} is that~$\vphiapx$ is~$\Lxxp_k$-weakly convex.
\cref{lem:fval-err,lem:gx-err,lem:gx-lip} are proved by integrating the remainder term of the Taylor expansion; however, in the case of~\cref{lem:gx-lip} some technicalities arise; while they could be easily resolved by imposing an extra order of regularity (namely by requiring that~$\Txxy{k+2}{k} f$ exists everywhere), 
we manage to avoid this condition \odima{by carefully applying}~\cref{ass:measurable}. 

\vspace{0.1cm}
Next we present our first main result: a general upper bound on the admissible diameter of~$Y$---i.e., the one allowing to replace~\cref{opt:min-max} with~\cref{opt:min-max-apx}  when searching for FOSPs. 

\begin{theorem}
\label{th:upper-bound} 
Given~$k \ge 0$, grant~\cref{ass:gradx,ass:tensy,ass:measurable}, and let~$x^*$ be~$(\tfrac{1}{6}\veps,2\Lxxp_k)$-FOSP for~\eqref{opt:min-max-apx}, with~$\Lxxp_k,\Lxyp_k$ as in~\eqref{eq:gx-lip}.
Then~$x^*$ is~$(\veps,2\Lxxp_k)$-FOSP---hence also~$(\veps,2\Lxx)$-FOSP---for~\eqref{opt:min-max} as long as
\odima{
\begin{equation}
\label{eq:upper-bound-k}
\min\left\{\,
\left(\frac{1}{50\,(k+1)!} \Lxxp_k \rho_k \Dy^{k+1} \right)^{1/2}, \;
\Lxyp_k \Dy %
\right\}
\le \frac{\veps}{24}.
\end{equation}
Moreover, in the case of~$k = 0$, the minimum can be complemented with the third term---namely,~$2\sigma_0$.}
\end{theorem}
\begin{comment}
If~$k = 0$ and granted Assumption~\ref{ass:gradx} only,~$x^*$ is an~$(6\veps,L)$-FSP for~\eqref{opt:min-max}  when
$
4\Lxy\Dy \le \veps,
$
or whenever~
%
%
%
$
4\min\{\Lxy \Dy, \sqrt{\frac{1}{50}\Lxx\rho_0 \Dy}\}
\le \veps
$
if also granted~\cref{ass:tensy-y}
(i.e. if~$f$ is~$\rho_0$-Lipschitz in~$y \in Y$ for all~$x \in X$).
\end{comment}
%
%
%
\noindent We shall present the proof of \odima{this result} in~\cref{sec:upper-proof} and \odima{establish} its tightness in~\cref{sec:lower}.
But before doing all this, let us discuss the implications of this result. %

\subsection{Discussion of~\cref{th:upper-bound}}
\paragraph{\odima{Zero-order approximation~($k = 0$).}}
\odima {As can be seen from Theorem~\ref{th:upper-bound}, the case~$k = 0$ is somewhat special.
Recall that in this case, the~``\eqref{opt:min-max-apx}-to-\eqref{opt:min-max} reduction'' is valid provided that
\[
\min\left\{\sigma_0, \, \sqrt{\Lxx\rho_0 \Dy}, \, \Lxy \Dy\right\} \lsim \veps.
\]
When~$\sigma_0 \lsim \veps$, this condition satisfied for {\em any}~$\Dy$, and without all other Lipschitzness conditions in~\cref{ass:gradx,ass:tensy} (i.e., when~$\Lxx,\rho_0,\Lxy$ are infinite); in this ``pathological'' case we have that any~$x \in X$ is an~$O(\veps)$-FOSP.
Another interesting scenario is when~$f(x,\cdot)$ is Lipschitz-continuous (i.e.,~$\rho_1 < \infty$) and has a stationary point in the {\em interior of $Y$} for each~$x \in X$. 
As it immediately follows from the variational bound~$\rho_0 \le 2\rho_1 \Dy$, so in this case~\cref{eq:upper-bound-k} with~$k = 0$ is {\em milder} than with~$k = 1$.
Note that this phenomenon is specific to~$0^{\textup{th}}$- vs.~$1^{\textup{st}}$-order approximation: if~$f^{(m)}(x,\cdot)$, for some~$m > 1$, is~$\rho_{m}$-Lipschitz and vanishes at some~$y \in Y$ for each~$x \in X$, then~$\rho_{m-1} \le 2\rho_{m} \Dy$; however, plugging this into~\cref{eq:upper-bound-k} with~$k = m$ we lose the factor~$\frac{(m+1)!}{m!} = m+1 \;[> 2]$ as compared to~\cref{eq:upper-bound-k} with~$k = m-1$.
}

%
%

\begin{comment}
Another interesting scenario is the ``$y$-FOSP effectively unconstrained'' one, where~$f(x,\cdot)$ has a stationary point in the interior of~$Y$ for each~$x \in X$. 
Assuming that~$\gy f$ is~$\rho_1$-Lipschitz as per~\cref{eq:tensy-lip}, we have the variational bound~$\rho_0 \le 2\rho_1 \Dy$, so~\cref{eq:upper-bound-0} gets {\em less} restrictive than~\cref{eq:upper-bound-k} with~$k = 1$---i.e., than
\begin{equation}
\label{eq:diam-k-1}
\Dy \min\big\{ 
\Lxy, 
\sqrt{(\lam + \tau_1 \Dy)\rho_1} 
\big\}  
\lsim \veps.
\end{equation}
Note, that this scenario is not too ``pathological:'' the interior stationary point does not have to be a {\em maximizer} of~$f(x,\cdot)$.
In particular, nonconcave polynomials, while maximized on the boundary of a compact domain, tend to have interior {saddle points} (e.g., \odima{consider}~$y_1^2 - y_2^2$ on~$Y = \{y \in \R^2: \|y\| \le 1\}$).
\end{comment}

%
%
%
%
%
%
%
%
%
%
%
%
%

\paragraph{\odima{Leading-term picture: validity.}}
\odima{
When~$k > 0$, the first term in~\cref{eq:upper-bound-k} depends on~$\Dy$ as~$\Dy^{\frac{k+1}{2}}$, modulo a higher-degree additive term (stemming from~$\Lxxp_k$) that can be neglected if~$\Dy$ is not too large (if~$\Dy \lsim k (\Lxx/\tau_k)^{1/k}$, so that~$\Lxxp_k \lsim \Lxx$ by Stirling's formula).
Likewise, the second term in~\eqref{eq:upper-bound-k} can be replaced with~$\Lxy \Dy$ if~$\Dy \lsim k (\Lxy/\sigma_k)^{1/(k-1)}$. 
Under these mild restrictions, condition~\eqref{eq:upper-bound-k} follows from}
\odima{
\begin{equation}
\label{eq:acc-bilinear}
\min\big\{\sqrt{\frac{1}{(k+1)!}\Lxx \rho_k \Dy^{k+1}}, \Lxy\Dy \big\} \lsim \veps, 
\quad
\text{\odima{that is}}
\quad
\Dy \lsim 
\max \left\{ (k+1) \left( \frac{\veps^{2}}{\Lxx  \rho_k} \right)^{\frac{1}{k+1}}, \frac{\veps}{\Lxy} \right\}.
\end{equation}
Note that the simplified condition~\eqref{eq:acc-bilinear} not depend on the higher-order regularity parameters~$\sigma_k,\tau_k$. Moreover, the approximation of~\eqref{eq:upper-bound-k} with~\eqref{eq:acc-bilinear} is {\em exact} for BC objectives (cf.~\eqref{eq:bilinear-coupling}), in which case~$\tau_k = 0$, and~$\sigma_k = 0$ if~$k = 1$. 
In fact, in~\cref{sec:lower} we shall demonstrate that for such problems~\cref{eq:acc-bilinear} is {\em tight} by constructing specific problem instances on which it is attained. Finally, note that even beyond the BC subclass, the condition~$\Dy \lsim k \min\{(\Lxx/\tau_k)^{1/k},  (\Lxy/\sigma_k)^{1/(k-1)} \}$ is enforced automatically under~\eqref{eq:acc-bilinear} as long as the {\em accuracy~$\veps$} is smaller than a constant threshold: as we verify in Appendix~\ref{app:aux},
\begin{equation}
\label{eq:eps-for-lower-order}
\veps \lsim_k 
\min \left\{ \left({\mu^{k}}/{\sigma_k}\right)^{\frac{1}{k-1}}, \left({\lam^{2k+1} \rho_k^{k}}/{\tau_k^{k+1}} \right)^{\frac{1}{2k}}\right\}
\end{equation}
suffices, where the~$\lsim_k$ notation hides a~$k$-dependent constant. 
Clearly, this is a very mild condition.}
\paragraph{\odima{Leading-term picture: implications.}}
Neglecting the dependences on~$\Lxx,\Lxy$ and~$\rho_k$, the second form of~\eqref{eq:acc-bilinear} amounts to~$\Dy = O(\veps)$ when~$k = 1$.
Meanwhile, when~$k > 1$, the coupling-independent threshold for~$\Dy$ shrinks with~$\veps \to 0$ as~$O(\veps^{\frac{2}{k+1}})$, i.e., slower than the coupling-dependent threshold~$O(\veps)$, and the former dominates the later when
\begin{equation}
\label{eq:high-accuracy}
\veps \lsim_k \left(\frac{\Lxy^{k+1}}{\Lxx  \rho_k} \right)^{\frac{1}{k - 1}}. 
\end{equation}
Thus, increasing the approximation order above~$k  = 1$ allows to gain in terms of the range of~$\Dy$ for which~\cref{th:upper-bound} applies, and~\cref{opt:min-max} can be replaced with~\cref{opt:min-max-apx} when searching for FOSPs. Moreover, in this ``high-accuracy'' regime the critical~$\Dy$ becomes {\em coupling-independent} for~$k > 1$, defined solely by the ``homogeneous'' parameters~$\lam, \rho_k$ and the target~$\veps$; meanwhile,~$\mu$ defines the moment of transition to this regime from the initial~$\Dy = O(\veps)$ one, as~$\veps$  is driven below the threshold in~\cref{eq:high-accuracy}. It is interesting to note that if $f$ possesses sufficient regularity, choosing $k=O(\log(1/\epsilon))$ implies that the diameter D can be an absolute constant, independent of $\epsilon$. However, as discussed in Section 5, the surrogate problem $(P_k)$ becomes computationally intractable for $k\ge 3$.

%
%
%
%
%
%
%
%
%
%
%
%
%
%
%
%
%
%

%
%

\begin{comment}
Third, condition~\cref{eq:upper-bound-k} simplifies for 
%
{\em bilinearly-coupled problems} of the form~\cref{eq:bilinear-coupling}. 

For such problems we have~$\cLykx = 0$ when~$k \ge 2$ and~$\cLykxx = 0$ when~$k \ge 1$ (thus $\Lxxp = \Lxx$  for any~$k \ge 0$).
As a result,~\cref{eq:upper-bound-k} reduces to~$\Lxy \Dy \lsim \veps$ when~$k = 0$, to 
\[
\min\{\Lxy, \sqrt{\Lxx\rho_1}\} \Dy  \lsim \veps \;\; \text{for~$k=1$}, 
\]
and, more generally, to 
\begin{equation}
\label{eq:acc-bilinear}
\min\{\Lxy\Dy, \sqrt{\Lxx \rho_k \Dy^{k+1}} \}  \lsim \veps \;\; \text{for~$k \ge 1$}
\end{equation}
\end{comment}
%
\begin{comment}
\begin{equation}
%
\Dy \lsim 
\left\{ 
\begin{aligned}
&\frac{\veps}{\Lxy} &\text{for }& k=0, \\
%
%
&\max \left\{ \frac{\veps} {\Lxy}, \left( \frac{\veps^{2}}{\Lxx  \rho_k} \right)^{\frac{1}{k+1}} \right\}
&\text{for }&  k\ge 1.
\end{aligned}\right.
\end{equation}
\end{comment}

\paragraph{Coupling-independent behavior for small~$\veps$.}
Observe that sufficient condition reads~$\Dy = O(\veps)$ in the case~$k = 1$, and simplifies to
$
\Dy = O(\veps^{\frac{2}{k+1}})
$
when~$k > 1$ and for small enough~$\veps$ -- 

For~$k = 1$ there is no such ``elbow effect.'' Here the critical diameter is~$\Dy = O(\veps)$ over the {\em whole range of~$\veps$,} with~$\mu$ appearing in the hidden constant factor~$1/\min\{\mu, \sqrt{\lam \rho_1}\}$. 
Thus, here we benefit from low interaction levels ($\mu \lsim \sqrt{\lam \rho_1}$) while not suffering from higher ones ($\mu \gsim \sqrt{\lam \rho_1}$), regardless of~$\veps > 0$.
Of course, beyond the BC class interaction does manifest in higher order, via~$\tau_1$ in~\cref{eq:eps-for-lower-order}.

For~$k = 0$ the ``elbow'' is ``in reverse\odima{:}'' we start with~$\Dy = O(\veps^{1/2})$ for large~$\veps$, and switch {to}~$\Dy = O(\veps)$ critical diameter as~$\veps$ passes the threshold~$\lam\rho_0/{\mu}$ which corresponds to~\cref{eq:high-accuracy} with~$k = 0$. 
In the scenario where~\cref{ass:tensy} holds simultaneously for~$k \in \{0,1\}$ and we can choose between approximations with these orders,
the gain for~$k = 1$ is marginal, only in the constant factor: namely,~$1/\min\{\mu,\sqrt{\lam \rho_1}\}$ instead of~$1/\mu$---and this effect only manifests on high interaction levels:~$\Lxy^2 \gsim \Lxx \rho_1.$
In fact, even this marginal comparative disadvantage of zeroth-order approximation disappears in the ``$y$ unconstrained as per FOSP'' scenario, where~$f(x,\cdot)$ has a stationary point inside~$Y$ for any~$x \in X$. %

%

%
\begin{comment}
For larger~$\veps$,~$\Dy = O(\veps)$ is still required; thus we observe a ``phase transition'' (when~$k \ge 1$). 
We shall refer to regime~\cref{eq:small-accuracy} as that of {\em strong coupling}; by extension we include here the case~$k = 1$, where~\cref{eq:small-accuracy} does not depend on~$\veps$ and reduces to
\end{comment}
%
%

\begin{comment}
Finally, note that the characterization~\cref{eq:diam-bilinear} of the allowed diameter~$\Dy$ essentially applies for general problems, not only those of the form~\cref{eq:bilinear-coupling}. 
Indeed, there are two ways in which~\cref{eq:upper-bound-k} differs from~\cref{eq:acc-bilinear}: (i) the addition of the term~$\sigma_k \Dy^k$ to~$\Lxy \Dy$ when~$k \ge 2$;
(ii) the addition of~$\tau_k \Dy^k$  to~$\Lxx$ when~$k \ge 1$ cf.~\cref{eq:gx-lip}. 
Both these additional terms have higher orders in~$\Dy$ than the corresponding ``baseline'' terms, and thus vanish under~\cref{eq:diam-bilinear} if~$\sigma,\tau$ are fixed. 
An interested reader can easily introduce the appropriate modifications into~\cref{eq:acc-bilinear}--\cref{eq:small-accuracy}. %
\end{comment}

%
%

\subsection{Proof of Theorem~\ref{th:upper-bound}}
\label{sec:upper-proof} 

The result follows by combining~\cref{prop:upper-uniform,prop:upper-coupled} which we formulate and prove next. 
These propositions correspond to the two choices for the minimum in~\cref{eq:upper-bound-k}, and we prove each of them under minimal assumptions; the full~\cref{ass:gradx,ass:tensy} are required to have both results simultaneously.

\begin{proposition}
\label{prop:upper-uniform}
For~$k \ge 0$ and~$\Lxxp_k$ given by~\cref{eq:gx-lip}, under the premise of Lemmas~\ref{lem:fval-err}--\ref{lem:gx-lip} one has
\[
\|\nabla \vphiapx_{2\Lxxp_k}(x) - \nabla \vphi_{2\Lxxp_k}(x)\| \le 
\sqrt{\tfrac{8}{(k+1)!} \Lxxp_k \rho_k \Dy^{k+1}}, \quad \forall x \in X.
\]
\end{proposition}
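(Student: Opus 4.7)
The plan is to reduce the claimed gradient-difference bound to a bound on the distance between the two proximal points of $\vphi$ and $\vphiapx$ at the same $x$, and then to control this distance using only the uniform approximation bound from Lemma~\ref{lem:fval-err} combined with strong convexity of the regularized objectives.

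First I would use the standard representation $\nabla \vphi_{2\Lxxp_k}(x) = 2\Lxxp_k(x - x^+)$ and $\nabla \vphiapx_{2\Lxxp_k}(x) = 2\Lxxp_k(x - \hat x^+)$, where $x^+$ and $\hat x^+$ are the proximal points of $\vphi$ and $\vphiapx$ respectively at $x$ (this is \cref{eq:moreau-explicit} applied to each envelope). The well-posedness of $\hat x^+$ uses that $\vphiapx$ is $\Lxxp_k$-weakly convex, which is the immediate corollary of Lemma~\ref{lem:gx-lip} noted right after its statement. Consequently $\|\nabla \vphi_{2\Lxxp_k}(x) - \nabla \vphiapx_{2\Lxxp_k}(x)\| = 2\Lxxp_k \|x^+ - \hat x^+\|$, so the task reduces to proving
\[
\|x^+ - \hat x^+\| \le \sqrt{\frac{2\rho_k \Dy^{k+1}}{\Lxxp_k (k+1)!}}.
\]

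Next, let $g(u) := \vphi(u) + \Lxxp_k \|u - x\|^2$ and $\hat g(u) := \vphiapx(u) + \Lxxp_k \|u-x\|^2$, so that $x^+$ and $\hat x^+$ are minimizers of $g$ and $\hat g$ on $X$. Since $\vphi$ is $\lam$-weakly convex (hence $\Lxxp_k$-weakly convex as $\Lxxp_k \ge \lam$) and $\vphiapx$ is $\Lxxp_k$-weakly convex by Lemma~\ref{lem:gx-lip}, both $g$ and $\hat g$ are $\Lxxp_k$-strongly convex on $X$. Strong convexity at the minimizers yields
\[
g(\hat x^+) \ge g(x^+) + \tfrac{\Lxxp_k}{2} \|\hat x^+ - x^+\|^2,
\qquad
\hat g(x^+) \ge \hat g(\hat x^+) + \tfrac{\Lxxp_k}{2} \|x^+ - \hat x^+\|^2.
\]
Adding the two inequalities and rearranging gives
\[
[g(\hat x^+) - \hat g(\hat x^+)] + [\hat g(x^+) - g(x^+)] \ge \Lxxp_k \|x^+ - \hat x^+\|^2.
\]

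The key input at this stage is Lemma~\ref{lem:fval-err}, which, upon taking the maximum over $y \in Y$, gives the uniform bound $|\vphi(u) - \vphiapx(u)| \le \delta$ for all $u \in X$, where $\delta := \rho_k \Dy^{k+1}/(k+1)!$. Since $g - \hat g = \vphi - \vphiapx$, each of the two bracketed terms above is at most $\delta$, so $2\delta \ge \Lxxp_k \|x^+ - \hat x^+\|^2$, i.e. $\|x^+ - \hat x^+\| \le \sqrt{2\delta / \Lxxp_k}$. Multiplying by $2\Lxxp_k$ yields the asserted bound $\sqrt{8\Lxxp_k \rho_k \Dy^{k+1}/(k+1)!}$. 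I do not expect any substantive obstacle here; the only delicate points are justifying that $\Lxxp_k$-weak convexity of $\vphiapx$ holds without extra hypotheses (already handled by Lemma~\ref{lem:gx-lip}) and being careful that the strong-convexity inequalities require the minimizers to actually belong to $X$, which they do by construction of the Moreau envelope.
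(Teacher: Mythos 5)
Your proposal is correct and follows essentially the same route as the paper's proof: represent both Moreau-envelope gradients via their proximal points, sum the two strong-convexity inequalities at $x^+$ and $\hat x^+$ so that the quadratic terms cancel, and bound the remaining difference $\vphi - \vphiapx$ uniformly by $\rho_k \Dy^{k+1}/(k+1)!$ via Lemma~\ref{lem:fval-err}. The constants work out exactly as you computed.
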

\begin{proof}
Clearly,~$\vphi(\cdot)$ is~$\Lxx$-weakly convex (cf.~\cref{ass:gradx}), hence also~$\Lxxp_k$-weakly convex.
Moreover, by~\cref{lem:gx-lip} the function~$\vphiapx(\cdot) = \max_{y \in Y} \fapx_k(\cdot,y)$ is also~$\Lxxp_k$-weakly convex.
Whence by~\cref{eq:moreau-explicit} we have %
\[
\nabla \vphi_{2\Lxxp_k}(x) = 2\Lxxp_k(x - x^+), 
\quad
\nabla \vphiapx_{2\Lxxp_k}(x) = 2\Lxxp_k(x - \hat x^+),
\] 
with~$x^+, \hat x^+$ being the associated proximal-point operators:
\[
\begin{aligned}
x^+ 
&=  \argmin_{x' \in X}  \{\vphi(x') + \Lxxp_k \|x'-x\|^2\},
\quad
\hat x^+ 
= \argmin_{x' \in X}  \{\vphiapx(x') + \Lxxp_k \|x'-x\|^2\}.
\end{aligned}
\]
As a result,~$\| \nabla \vphi_{2\Lxxp_k}(x) - \nabla \vphiapx_{2\Lxxp_k}(x)\| = 2\Lxxp_k \|\hat x^+ - x^+\|$, and we can focus on bounding~$\|\hat x^+ - x^+\|$. 
To this end, since the function~$\vphi(\cdot) + \Lxxp_k \|\cdot-x\|^2$ is~$\Lxxp_k$-strongly convex and minimized at~$x^+$, we have that
\begin{equation}
\label{eq:strong-cvxty-ineq}
\tfrac{1}{2} \Lxxp_k \|\hat x^+ - x^+ \|^2 \le \vphi(\hat x^+) + \Lxxp_k \|\hat x^+ - x\|^2 - \vphi(x^+) - \Lxxp_k \|x^+ - x\|^2.
\end{equation}
On the other hand, since the function~$\vphiapx(\cdot) + \Lxxp \|\cdot-x\|$ is~$\Lxxp$-strongly convex and minimized at~$\hat x^+$,
\[
\tfrac{1}{2} \Lxxp_k \|\hat x^+ - x^+ \|^2 \le \vphiapx(x^+) + \Lxxp_k \|x^+ - x\|^2 - \vphiapx(\hat x^+) - \Lxxp_k \|\hat x^+ - x\|^2.
\]
Adding the two inequalities gives
$
\Lxxp_k \|\hat x^+ - x^+ \|^2 \le \vphiapx(x^+) - \vphi(x^+) + \vphi(\hat x^+) -  \vphiapx(\hat x^+) \le 2\sup_{x \in X} |\vphiapx(x) - \vphi(x)|.
$
Finally, let~$x \in X$ be arbitrary and~$\hat y^* = \hat y^*(x)$ be such that~$\vphiapx(x) = \fapx(x,\hat y^*)$. Then~\cref{lem:fval-err} gives
\[
\vphiapx(x) - \vphi(x) 
\le \fapx_k(x, \hat y^*) - f(x, \hat y^*) 
\le \tfrac{1}{(k+1)!} \cLyky \Dy^{k+1}, \quad \forall x \in X.
\]
The same estimate holds for~$\vphi(x) - \vphiapx(x)$ which is bounded from above by~$f(x, y^*) - \fapx_k(x, y^*)$ with~$y^* = y^*(x)$ such that~$\vphi(x) = f(x,y^*)$), 
and thus for~$\sup_{x \in X} |\vphiapx(x) - \vphi(x)|$. The result follows.
\end{proof}

\begin{proposition}
\label{prop:upper-coupled}
Let~$k \ge 0$, \odima{and~$\Lxxp_k, \Lxyp_k$ be as in~\cref{eq:gx-lip}--\cref{eq:gx-vary}.}
Grant~\cref{ass:gradx} and the assumptions of~\cref{lem:gx-err,lem:gx-lip}. 
Then for any~$x^* \in X$ such that~$\|\nabla \vphiapx_{2\Lxxp_k}(x^*)\| \le \veps$, \odima{one has the following:}
\[
\odima{\tfrac{1}{4}\|\nabla \vphi_{2\Lxxp_k}(x^*) - \nabla \vphiapx_{2\Lxxp_k}(x^*)\|
\;\le\; \veps \;+\; \ind_{k > 0} \, \Lxyp_k \Dy \;+\; \ind_{k=0}\, \min\{\Lxy \Dy, 2\sigma_0\}.}
\]
\end{proposition}

\begin{proof}
First observe that~$\vphi(\cdot)$ and~$\vphiapx(\cdot)$ are~$\Lxxp_k$-weakly convex by~\cref{lem:gx-lip}. %
Hence,~$\vphi(\cdot) + \Lxxp_k\|\cdot-x^*\|^2$ and~$\vphiapx(\cdot) + \Lxxp_k \|\cdot-x^*\|^2$ are~$\Lxxp$-strongly convex, and
their corresponding minimizers~$x^+, \hat x^+$ satisfy (cf.~\cref{eq:strong-cvxty-ineq})
\begin{align}
\tfrac{1}{2} \Lxxp_k \|\hat x^+ - x^+ \|^2 
&\le  \Lxxp_k \|\hat x^+ - x^*\|^2 + \vphi(\hat x^+) - \vphi(x^+) - \Lxxp \|x^+ - x^*\|^2 \nn
&\le 4\Lxxp_k \|\hat x^+ - x^*\|^2 + \vphi(\hat x^+) - \vphi(x^+) - \tfrac{3}{4} \Lxxp_k \|\hat x^+ - x^+\|^2.
\label{eq:strong-cvxty-ineq-xhat}
\end{align}
Here in the final step we used the inequality
$
\|\hat x^+ - x^+\|^2 \le \tfrac{4}{3}\|x^+ - x^*\|^2 + 4\|\hat x^+ - x^*\|^2
$ 
which can be easily deduced from the triangle inequality.
Furthermore, by~\cref{prop:moreau-to-primal} we have
\[
\nabla \vphi_{2\Lxxp_k}(x^*) = 2\Lxxp_k(x^* - x^+), 
\quad
\nabla \vphiapx_{2\Lxxp_k}(x^*) = 2\Lxxp_k(x^* - \hat x^+).
\] 
Hence~$\| \nabla \vphiapx_{2\Lxxp_k}(x^*) - \nabla \vphi_{2\Lxxp_k}(x^*) \| = 2\Lxxp_k \|\hat x^+ - x^+\|$ so we can focus on bounding~$\|\hat x^+ - x^+\|$ using~\cref{eq:strong-cvxty-ineq-xhat}.
Now observe that~$x^*$, being an~$(\veps,2\Lxxp_k)$-FOSP for~\cref{opt:min-max-apx} with a~$\Lxxp_k$-weakly convex \odima{max-function}~$\vphiapx(\cdot)$, satisfies the premise of Proposition~\ref{prop:moreau-to-primal}, so that
\begin{equation}
\label{eq:moreau-charact-apx}
2 \Lxxp_k \|\hat x^+ - x^*\| \le \veps, 
\quad 
\min_{\xi \in \partial \vphiapx(\hat x^+)} \Sx(\hat x^+,\xi,2\Lxxp_k) \le \veps,
\end{equation}
cf.~\cref{eq:moreau-to-primal}, with functional~$\Sx(x, \xi, \lam')$ defined by 
$
\Sx^2(x, \xi, \lam') := 2\lam' \max_{u \in X} \big\{-\langle \xi, u - x \rangle - \tfrac{\lam'}{2} \|u-x\|^2 \big\}
$
for arbitrary~$x \in X$,~$\xi \in \XX,$ and~$\lam' > 0$.
By~\cref{eq:strong-cvxty-ineq-xhat} and the first bound in~\cref{eq:moreau-charact-apx} we get 
\[
\tfrac{1}{2} (\Lxxp_k \|\hat x^+ - x^+ \|)^2 
\le \veps^2 + \Lxxp_k \left( \vphi(\hat x^+) - \vphi(x^+) - \tfrac{3}{4} \Lxxp_k \|x^+ - \hat x^+\|^2 \right).
\]
Meanwhile, convexity of~$\vphi(\cdot) + \tfrac{1}{2}\Lxxp_k \|\cdot - \hat x^+\|^2$ implies that
$
\vphi(\hat x^+) - \vphi(x^+) - \tfrac{\Lxxp_k}{2} \|\hat x^+ - x^+\|^2 
\le \lang \xi^+, \hat x^+ - x^+  \rang
$
for any~$\xi^+ \in \partial \vphi(\hat x^+)$. 
Using this fact and letting~$\hat\xi_{X}^+ \in \Argmin_{\xi \in \partial \vphiapx(\hat x^+)} \Sx(\hat x^+,\xi,2\Lxxp_k)$, cf.~\cref{eq:moreau-charact-apx}, we get
\begin{align}
( \Lxxp_k \|\hat x^+ - x^+ \| )^2 
&\le 2\veps^2 + 2\Lxxp_k \left( -\lang \xi^+, x^+ - \hat x^+   \rang - \tfrac{1}{4} \Lxxp_k  \|\hat x^+ - x^+\|^2 \right) \nn
&\le 2\veps^2 + 2\Lxxp_k \left( -\lang \hat\xi_X^+, x^+ - \hat x^+   \rang - \tfrac{1}{4} \Lxxp_k \|\hat x^+ - x^+\|^2  + \|\hat x^+ - x^+\| \cdot \|\hat \xi_X^+ - \xi^+\| \right) \nn
&\le 2\veps^2 + 2\Sx^2(\hat x^+,\hat\xi_X^+,\tfrac{1}{2}{\Lxxp_k}) + 2 \Lxxp_k \|\hat x^+ - x^+\| \cdot \|\hat \xi_X^+ - \xi^+\|.
\label{eq:grad-diff-to-estimate}
\end{align}
We furthermore have~$\Sx(\hat x^+, \hat \xi_X^+, \tfrac{1}{2}\Lxxp_k) \le \veps$ due to~\cref{eq:moreau-charact-apx} and the well-known fact that~$\Sx(x, \xi,\Lxxp)$ is non-decreasing in~$\Lxxp > 0$. (This monotonicity property follows from the proximal Polyak-\L{}ojasiewicz inequality---see, e.g.,~\cite[Lem.~1]{karimi2016linear}.) 
Meanwhile, by a version of Danskin's theorem (cf.~\cref{lem:danskin} in the appendix),~$\hat \xi_X^+$ belongs to the closed convex hull of the set of active~$x$-gradients of~$\fapx_k$ at~$\hat x^+$:
\[
\hat\xi_X^+ \in \clconv\big(\big\{\gx \fapx_k(\hat x^+, y), \; y\in\textstyle\Argmax_{y \in Y} \fapx_k(\hat x^+, y) \big\}\big).
\]
Similarly, we can choose~$\xi^+ = \gx f(\hat x^+, y^*)$ for~$y^* \in \Argmax_{y \in Y} f(\hat x^+, y)$, 
whence by convexity of~$\|\cdot\|$:
\[
\|\hat \xi_X^+ - \xi^+\| \le \max\displaystyle_{y \in Y} \| \gx \fapx_k(\hat x^+, y) - \gx f(\hat x^+, y^*) \|.
\]
Thus, when~$k = 0$ we have~$\|\hat \xi_X^+ - \xi^+\| \le \min\{\Lxy\Dy, 2\sigma_0\}$ by~\cref{lem:gx-err}.
On the other hand, when~$k \ge 1$ we pick~$\bar y \in \Argmax_{y \in Y} \| \gx \fapx_k(\hat x^+, y) - \gx f(\hat x^+, y^*) \|$, then by~\cref{lem:gx-err} and the triangle inequality
\[
\begin{aligned}
\|\hat \xi_X^+ - \xi^+\| 
&\le \| \gx f(\hat x^+, \bar y) - \gx f(\hat x^+, y^*)\| + \| \gx \fapx_k(\hat x^+, \bar y) - \gx f(\hat x^+, \bar y)\| 
\le \odima{\Lxy \Dy + \tfrac{2}{k!} \cLykx \Dy^k = \Lxyp_k \Dy}. 
\end{aligned}
\]
Finally, plugging the obtained estimates for~$\Sx(\hat x^+, \hat \xi_X^+,\tfrac{1}{2}\Lxxp_k)$ and~$\|\hat \xi_X^+ - \xi^+\|$ into~\cref{eq:grad-diff-to-estimate} we have that
\[
(\Lxxp_k \|\hat x^+ - x^+ \|)^2 
\le 4\veps^2 + 2 \Lxxp_k \|\hat x^+ - x^+\|  \left( \ind_{k = 0} \, \min\{\Lxy\Dy, 2\sigma_0\} \,+\, \ind_{k > 0} \, \Lxyp_k \Dy \right)
\]
Solving this inequality for~$\Lxxp_k \|\hat x^+ - x^+\| = \tfrac{1}{2}\| \nabla \vphiapx_{2\Lxxp_k}(x^*) - \nabla \vphi_{2\Lxxp_k}(x^*) \|$ we conclude the proof.
\end{proof}

\section{Nearly matching lower bounds on the admissible diameter}
\label{sec:lower}

Our next goal is to prove that conditions
in Theorem~\ref{th:upper-bound} are tight in leading-order terms.
Namely, for any~$k \in \N \cup \{0\}$ we exhibit instances of~\eqref{opt:min-max} such that the corresponding approximate problem~\cref{opt:min-max-apx} has an {\em exact} FOSP which is {\em not}~$(\veps,2\lam)$-FOSP in~\cref{opt:min-max} for any accuracy~$\veps$ in the range
\begin{equation}
\label{eq:lower-bound-k}
\veps \lsim
\frac{1}{k+1}
\min\bigg\{ 
\Lxy \Dy,  %
\sqrt{\frac{\Lxx \cLyky \Dy^{k+1}}{k!}} 
\bigg\}.
\end{equation}
(For simplicity, in this section we let~$\Dy = \diam(Y)$; this is anyway the case in all instances to be exhibited.)
The objectives in these instances satisfy~\cref{ass:gradx},~\cref{ass:tensy} with appropriate~$k$, and~\cref{ass:measurable}.
Moreover, for~$k \ge 1$ we use bilinearly-coupled objectives (cf.~\cref{eq:bilinear-coupling}), so~\cref{ass:tensy} is satisfied with~$\tau_k = 0$ (hence,~$\Lxxp_k$ in~\cref{eq:upper-bound-k} simplifies to~$\Lxx$, cf.~\cref{eq:gx-lip}) and~$\sigma_k = \Lxy\ind\{k= 0\}$ (hence, the additive to~$\Lxy \Dy$ term in~\cref{eq:upper-bound-k} disappears). 
Thus, for~$k \ge 1$ our lower bound~\cref{eq:lower-bound-k} is tight over the BC subclass up to a~$O(1/k)$ factor; for~$k = 0$ it misses the~$\sigma_0$ term in the minimum, typically anyway large (cf.~\cref{eq:sigma-0} and the accompanying discussion).
In this sense, the condition in Theorem~\ref{th:upper-bound} turns out to be nearly tight.

Let us now specify our problem instances. Consider a family of functions on~$\R \times \R$ in the form
\begin{equation}
\label{eq:hard-quadratic}
F_{k,s,\lam,\mu,\rho}(x,y) := -\frac{\lam x^2}{2} + \mu xy + \frac{s\rho|y|^{k+1}}{(k+1)!}
\quad \text{for} \;\;\; k \in \N \cup \{0\}, \;\, s \in \{\pm 1, 0\}, \;\; \lam > 0, \;\; \mu \ge 0, \;\; \rho \ge 0.
\end{equation}
Note that~$F_{k,s,\lam,\mu,\rho}$ is BC, concave in~$x$, and convex or concave in~$y$ depending on~$s$.
We also consider
\begin{equation}
\label{eq:hard-sigmoid}
S_{\lam,\rho,\Dy}(x,y) := -\frac{\lam x^2}{4} + \frac{\rho y}{2} \left( \tanh\left(\sqrt{\frac{\Lxx}{\rho \Dy}} \, x\right) - 1 \right)
\quad \text{for} \;\;\; \lam > 0, \;\; \rho \ge 0, \;\; \Dy \ge 0.
\end{equation}
Each of our hard instances of the form~\cref{opt:min-max} is specified by choosing one of these functions as the objective,~$X \subseteq \R$, and~$Y = [a,a+\Dy]$ with some shift~$a \in \R$, while allowing for varying~$\Lxx,\Lxy,\rho,$ and~$\Dy$. %
In the next lemma (proved in~\cref{app:aux}) we establish the smoothness properties of these functions. %

\begin{lemma}
\label{lem:hard-properties}
For any~$k \in \N \cup \{0\}$,~$s = \pm 1$,~$\lam > 0$,~$\mu \ge 0,$~$\rho \in \R$, and~$\Dy \ge 0$, the following claims hold:
\begin{enumerate}
\item
Function $F_{k,s,\lam,\mu,\rho}$ satisfies~\cref{ass:gradx} on~$\R \times \R$ (and thus also on~$\R \times [a,a+\Dy]$ for any~$a \in \R$). %
Moreover, function~$S_{\lam,\rho,\Dy}$
satisfies~\cref{ass:gradx} on~$\R \times [0,\Dy]$ %
provided that~$\mu \ge \sqrt{2\lam \rho/\Dy}$. 
\item 
Let~$r = \frac{\mu \Dy}{2\lam}$. 
\cref{ass:tensy} with~$k = 0$,~$\rho_0 = \rho$, and~$\sigma_0 = \mu\Dy$ is satisfied by function~$F_{0,0,\lam,\mu,0}$ on~$[-r,r] \times [-\tfrac{1}{2}\Dy, \tfrac{1}{2} \Dy]$ if~$\mu \le \sqrt{2\lam \rho/\Dy}$, and by function~$S_{\lam,\rho,\Dy}$ on~$[-r,r] \times [0,\Dy]$ if~$\mu \ge \sqrt{2\lam \rho/\Dy}$. 
\item 
When~$k \ge 1$ and~$a \in \R$, function $F_{k,s,\lam,\mu,\rho}$ on~$\R \times [a, a+\Dy]$ satisfies~\cref{ass:tensy} with~$\cLyky = \rho$, $\cLykx = \mu \ind\{k = 1\}$, and~$\cLykxx = 0$---in other words,~\cref{ass:tensy} restricted to the objective class~\cref{eq:bilinear-coupling}.
\end{enumerate}
\end{lemma}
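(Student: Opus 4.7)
The proof of this lemma is purely computational: once we differentiate the two candidate hard instances, all three claims reduce to bounding partial derivatives by the asserted Lipschitz constants on the indicated domains. My plan is therefore to organize the argument by the three parts of the statement, treating the function $F_{k,s,\lambda,\mu,\rho}$ (for which all derivatives have closed forms) separately from $S_{\lambda,\rho,\Dy}$ (which requires a short auxiliary computation involving $\tanh$).

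\textbf{Part 1.} For $F_{k,s,\lambda,\mu,\rho}(x,y) = -\tfrac{\lambda}{2}x^2 + \mu xy + \tfrac{s\rho}{(k+1)!}|y|^{k+1}$, I directly evaluate $\nabla_x F = -\lambda x + \mu y$, from which the joint Lipschitz bound \eqref{eq:gradx-lip} with constants $\lambda$ and $\mu$ is immediate everywhere on $\R\times\R$. For $S_{\lambda,\rho,\Dy}$, setting $\alpha := \sqrt{\lambda/(\rho \Dy)}$, I compute
\[
\nabla_x S = -\tfrac{\lambda}{2}x + \tfrac{y}{2}\sqrt{\lambda\rho/\Dy}\,\textup{sech}^2(\alpha x),\qquad \nabla^2_{xy}S = \tfrac{1}{2}\sqrt{\lambda\rho/\Dy}\,\textup{sech}^2(\alpha x).
\]
Since $|\textup{sech}^2|\le 1$ and $|\textup{sech}^2(\cdot)\tanh(\cdot)|\le \tfrac{2}{3\sqrt{3}}$, the $y$-Lipschitz constant of $\nabla_x S$ is at most $\tfrac{1}{2}\sqrt{\lambda\rho/\Dy}$, which is below $\mu$ under the standing assumption $\mu\ge \sqrt{2\lambda\rho/\Dy}$. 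For the $x$-Lipschitz part, I differentiate once more and use $\alpha\sqrt{\lambda\rho/\Dy}=\lambda/\Dy$ together with $y\in[0,\Dy]$ to conclude $|\nabla^2_{xx}S|\le (\tfrac{1}{2}+\tfrac{2}{3\sqrt{3}})\lambda<\lambda$.

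\textbf{Part 2.} Here I verify the $k=0$ smoothness, which by \cref{eq:tensy-lip}–\cref{eq:tensy-xx} amounts to Lipschitzness of $f$ itself plus Lipschitzness of $\nabla_x f$ in $x$ (the latter already granted via Part 1 with $\tau_0=\lambda$). For $F_{0,0,\lambda,\mu,0}(x,y)=-\tfrac{\lambda}{2}x^2+\mu xy$ on $[-r,r]\times[-\tfrac{\Dy}{2},\tfrac{\Dy}{2}]$ with $r=\mu\Dy/(2\lambda)$, I bound $|\nabla_y F|=|\mu x|\le \mu r = \mu^2\Dy/(2\lambda)$, which is at most $\rho$ precisely when $\mu\le\sqrt{2\lambda\rho/\Dy}$, and $|\nabla_x F|\le \lambda r+\mu\Dy/2=\mu\Dy$, yielding $\sigma_0=\mu\Dy$. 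For $S_{\lambda,\rho,\Dy}$, I read off $\nabla_y S=\tfrac{\rho}{2}(\tanh(\alpha x)-1)\in[-\rho,0]$, so $\rho_0=\rho$; then the calculation from Part 1, specialized to $x\in[-r,r]$, $y\in[0,\Dy]$, gives $|\nabla_x S|\le \tfrac{\mu\Dy}{4}+\tfrac{1}{2}\sqrt{\lambda\rho\Dy}$, which is bounded by $\mu\Dy$ under $\mu\ge\sqrt{2\lambda\rho/\Dy}$.

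\textbf{Part 3.} For $k\ge 1$, only the $|y|^{k+1}$ term contributes to $\Ty{k}F$ beyond (for $k=1$) the bilinear term $\mu xy$. I compute $\partial_y^k|y|^{k+1}$ by case analysis on the parity of $k$: on each half-line it equals $\pm(k+1)!\,y$ up to sign, and combining the two halves gives either $(k+1)!|y|$ (for $k$ even) or $(k+1)!\,y$ (for $k$ odd). In either case the result is $(k+1)!$-Lipschitz in $y$, yielding $\rho_k=\rho$. For $k=1$, $\partial_y F = \mu x + s\rho y$, so $\sigma_1=\mu$; for $k\ge 2$, the $\mu xy$ term has already been killed, so $\partial_y^k F$ is $x$-independent and $\sigma_k=0$. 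The cross-Lipschitz condition \eqref{eq:tensy-xx} reduces to showing that $\partial_x\partial_y^k F$ is constant in $x$, which is immediate from the same formulas, so $\tau_k=0$.

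The arguments are entirely mechanical and present no conceptual difficulty; the main source of potential error is the bookkeeping in Part 3, where signs and absolute values from differentiating $|y|^{k+1}$ must be tracked carefully across the parity cases, and in Part 1 for $S$, where the precise numerical constants from the uniform bounds on $\tanh$ and $\textup{sech}^2$ determine whether the constants $\lambda$ and $\mu$ (rather than some multiple thereof) suffice; the choice $\mu\ge\sqrt{2\lambda\rho/\Dy}$ provides enough slack for both parts to go through simultaneously.
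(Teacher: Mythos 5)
Your proposal is correct and follows essentially the same route as the paper's proof: direct computation of the partial derivatives of $F_{k,s,\lam,\mu,\rho}$ and $S_{\lam,\rho,\Dy}$ and bounding them on the stated domains, including the key checks $\mu r = \mu^2\Dy/(2\lam)\le\rho$ under $\mu\le\sqrt{2\lam\rho/\Dy}$, the bound $\tfrac12\sqrt{\lam\rho/\Dy}\le\mu/(2\sqrt2)$ under the reverse inequality, and the parity analysis of $\partial_y^k|y|^{k+1}$. The only cosmetic difference is that you use the sharp constant $2/(3\sqrt3)$ for $|\mathrm{sech}^2\tanh|$ in bounding $\partial_{xx}S$, where the paper uses the cruder $|\tanh''|\le 1$ (both suffice for $\lam$-Lipschitzness).
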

%
%
\begin{comment}
\odima{
\begin{equation}
\label{eq:upper-bound-k-simp}
\min\left\{ 
\mu D, 
\sqrt{\frac{\lam|\rho| (2R)^{k+1}}{(k+1)!}}%
\right\}
\lsim \veps.
\end{equation} 
%
We shall now demonstrate that for the choice~$f = F_{k,\lam,\mu,\rho}$, cf.~\cref{eq:hard-quadratic}, this condition is tight.}
\end{comment}

Due to significant differences in the statements and analyses, we separately consider the three cases~$k = 0,$ $k= 1,$ and~$k \ge 2$.
We begin with the case~$k = 0$, where we use $F_{0,0,\lam,\mu,\rho}$ or~$S_{\lam,\rho,\Dy}$ depending on the level of coupling (cf. claims 1 and 2 of~\cref{lem:hard-properties}) and obtain the following result.

\begin{proposition}%
\label{prop:lower-bound-0}
For~$\lam, \mu, \rho, \Dy > 0$, let~$X = [-\frac{\mu \Dy}{2\lam},\frac{\mu \Dy}{2\lam}]$,~$\fapx \equiv \fapx_0$, cf.~\cref{def:fk}, with~$f, Y, \by$ to be defined.
\begin{enumerate}
\item 
Let~$f = F_{0,0,\lam,\mu,0}$ and~$Y = [-\frac{1}{2}\Dy, \frac{1}{2}\Dy]$.
\odima{Then\,~$\exists (x^*, \by) \in X \times Y$:\,~$\vphiapx_{2\lam}'(x^*)=0$ and
$
|\vphi_{2\lam}'(x^*)| \ge \frac{\mu \Dy}{2}.
$}
\item 
Let~$f = S_{\lam,\rho,\Dy}$,~$Y = [0,\Dy]$,~$\mu \ge \sqrt{\frac{2\lam \rho}{\Dy}}$. 
\odima{Then~$\exists (x^*,\by) \in X \times Y$:~$\vphiapx_{2\lam}'(x^*)= 0$ and
$
|\vphi_{2\lam}'(x^*)| \ge \frac{\sqrt{\lam \rho \Dy}}{3}.
$}
\end{enumerate}
\end{proposition}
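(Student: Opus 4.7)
For each part of the proposition, my plan is to exhibit a specific pair $(\bar y, x^*)$ and verify by explicit computation that (i) $x^*$ is fixed by the proximal map of the surrogate primal $\vphiapx$ at parameter $2\lam$, so $\vphiapx_{2\lam}'(x^*) = 0$, and (ii) the true proximal map of $\vphi$ sends $x^*$ to a distant point, producing a large $|\vphi_{2\lam}'(x^*)|$. The technical backbone is that both $\vphi$ and $\vphiapx$ are $\lam$-weakly convex, so each Moreau-regularized objective is actually strongly convex and the prox is characterized by a single first-order condition on an explicit one-variable function.

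For Part~1 the true primal is $\vphi(x) = -\tfrac{\lam x^2}{2} + \tfrac{\mu \Dy}{2}|x|$, a downward parabola plus a kink at $x = 0$, whereas $\vphiapx(x) = -\tfrac{\lam x^2}{2} + \mu \bar y\, x$ is smooth---the kink is ``erased'' by the zeroth-order Taylor expansion. I would pick $\bar y = \Dy/4$ and $x^* = \tfrac{\mu \Dy}{4\lam}$, so that the unconstrained minimizer of $\vphiapx(u) + \lam(u-x^*)^2$ in $u$, namely $u = 2x^* - \mu\bar y/\lam$, coincides with $x^*$ itself; this gives $\vphiapx_{2\lam}'(x^*) = 0$. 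A short subdifferential check at $u = 0$ for the regularized true primal $\vphi + \lam(\cdot-x^*)^2$ (using $\partial|\cdot|(0) = [-1,1]$) then shows that $0$ belongs to the interval $[-\mu\Dy, 0]$, so $u = 0$ is the prox point of $x^*$ under $\vphi$; this yields $\vphi_{2\lam}'(x^*) = 2\lam x^* = \tfrac{\mu \Dy}{2}$.

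For Part~2 the key structural observation is that when $f = S_{\lam,\rho,\Dy}$ the $y$-coefficient in $f(x,y)$, namely $\tfrac{\rho}{2}(\tanh(\alpha x) - 1)$ with $\alpha := \sqrt{\lam/(\rho \Dy)}$, is non-positive for every $x \in \R$; hence the maximum over $y \in [0,\Dy]$ is attained at $y^* = 0$ and $\vphi(x) = -\tfrac{\lam x^2}{4}$, a smooth concave quadratic. The true prox then admits the closed form $x^+(x) = \tfrac{4x}{3}$ whenever $\tfrac{4x}{3} \in X$, giving $\vphi_{2\lam}'(x) = -\tfrac{2\lam x}{3}$. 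Selecting $x^* = \tfrac{1}{2\alpha} = \tfrac{1}{2}\sqrt{\rho \Dy/\lam}$ yields $|\vphi_{2\lam}'(x^*)| = \tfrac{1}{3}\sqrt{\lam\rho\Dy}$, matching the claim. The companion $\bar y$ is pinned down by the interior condition $\vphiapx'(x^*) = 0$, which rearranges to $\bar y = \tfrac{\Dy}{2}\cosh^2(\tfrac{1}{2}) \in (0,\Dy)$; this first-order condition suffices for $\vphiapx_{2\lam}'(x^*) = 0$ because $\vphiapx$ is $\lam$-weakly convex (since $|\tanh(t)\operatorname{sech}^2(t)|$ is bounded on $\R$ by $\tfrac{2}{3\sqrt{3}}$, keeping $\vphiapx''$ above $-\lam$).

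The main obstacle is bookkeeping rather than any real difficulty: it must be checked that $x^*$, and for Part~2 also $\tfrac{4x^*}{3}$, lie in $X = [-\tfrac{\mu\Dy}{2\lam},\tfrac{\mu\Dy}{2\lam}]$ under the hypothesis $\mu \ge \sqrt{2\lam\rho/\Dy}$ (both reduce to $\mu \ge c\sqrt{\lam\rho/\Dy}$ with $c \in \{1, 4/3\}$, and both constants lie below $\sqrt 2$); that the chosen $\bar y$ lies in $Y$ in each part (direct numerical check, using $\cosh^2(\tfrac{1}{2}) < 2$); and that the first-order conditions invoked for the proximal points are genuinely sufficient, which follows from the strong convexity of the regularized objectives induced by $\lam$-weak convexity of the primals.
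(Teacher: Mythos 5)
Your proposal is correct and follows essentially the same construction as the paper: identical choices $\by=\Dy/4$, $x^*=\mu\Dy/(4\lam)$ in Part~1 (the paper phrases the prox computation via a soft-thresholding operator rather than a subdifferential check at $u=0$, but the content is the same), and in Part~2 the same function $S_{\lam,\rho,\Dy}$ with the same observation that $\vphi(x)=-\lam x^2/4$. The only variation is that in Part~2 you fix $x^*=\tfrac12\sqrt{\rho\Dy/\lam}$ and solve the stationarity equation for $\by$, whereas the paper fixes $\by=2\Dy/3$ and numerically brackets the root $x^*$ of the resulting transcendental equation; your parametrization is marginally cleaner but not a different argument, and all the side conditions you flag ($x^*,\tfrac{4}{3}x^*\in X$, $\by\in Y$, $\lam$-weak convexity of $\vphiapx$) check out.
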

When combined with the first two claims of~\cref{lem:hard-properties},~\cref{prop:lower-bound-0} establishes~\cref{eq:lower-bound-k} for~$k = 0$. 
As a result, we verify tightness of~\cref{th:upper-bound} in the case of zeroth-order approximation.
Note, however, that Lemma~\ref{lem:hard-properties} is restricted to the regime~$\sigma_0 \approx \mu \Dy$; hence, the term~$\sigma_0$ in the left-hand side of~\cref{eq:upper-bound-k}---which is beneficial when~$\sigma_0 \ll \mu \Dy$---is not captured in the result we have just obtained.

Next we address the case of~first-order approximation. 
Here we consider instances of~\cref{opt:min-max} with objective given by~$F_{1,-1,\lam,\mu,\rho}$ or~$F_{1,1,\lam,\bar \mu,\bar\rho}$ for some~$\bar\mu \le \mu$ and~$\bar \rho \le \rho$, depending on the region of parameters (as controlled by the relative level of coupling~$\Lxy$ compared to the geometric mean~$\sqrt{\lam\rho_1}$ of the ``homogeneous'' Lipschitz constants, cf.~\cref{eq:tensy-lip} with~$k = 1$). Here we obtain the following result.

\begin{proposition}%
\label{prop:lower-bound-1}
For~$\lam,\mu,\rho,\Dy > 0$, set~$Y = [-\half\Dy, \half\Dy]$,~$\fapx \equiv \fapx_1$, cf.~\cref{def:fk}, with~$f,\by$ to be defined.
\begin{enumerate}
\item 
\odima{Let~$\mu \le \sqrt{\frac{\lam \rho}{2}}$ and~$f = F_{1,-1,\lam,\mu,\rho}$. Then~$\exists (x^*,\by) \in \R \times Y$ such that $\vphiapx_{2\lam}'(x^*)=0$ and
$
|\vphi_{2\lam}'(x^*)| \ge \frac{\mu \Dy}{3}.
$}
\item 
\odima{Let~$\mu \ge \sqrt{\frac{\lam \rho}{2}}$ and~$f = F_{1,1,\lam,\bar \mu,\bar\rho}$,} then~$\exists\, \bar\mu \le \mu$,~$\bar\rho \le \rho$,~$(x^*,\by) \in \R \times Y$:~$\vphiapx_{2\lam}'(x^*)=0$ and
$
|\vphi_{2\lam}'(x^*)| \ge \sqrt{\frac{\lam \rho}{8}}\Dy.
$
\end{enumerate}
\end{proposition}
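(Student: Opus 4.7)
The plan is to exhibit, in each case, explicit choices of the parameters of $f = F_{1,s,\lam,\bar\mu,\bar\rho}$, the Taylor centre $\by \in Y$, and the test point $x^* \in \R$ so that simultaneously $0$ belongs to the weak subdifferential $\partial\vphiapx(x^*)$---which, by strong convexity of the proximal subproblem for $\vphiapx$, forces $x^+_{apx} = x^*$ and hence $\vphiapx_{2\lam}'(x^*) = 0$---while the primal proximal point $x^+_{\vphi}$ separates from $x^*$ by a controlled margin, giving $|\vphi_{2\lam}'(x^*)| = 2\lam|x^* - x^+_{\vphi}|$ of the stated order. In both cases $\vphiapx$ is one-dimensional, $\lam$-weakly convex, and piecewise quadratic with at most one convex kink, so both conditions reduce to closed-form one-variable calculus.

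For case~1 ($\mu \le \sqrt{\lam\rho/2}$), I take $f = F_{1,-1,\lam,\mu,\rho}$ (strongly concave in $y$), $\by = 0$, and $x^* := \mu\Dy/(2\lam)$. Since $\fapx_1(x,y) = -\lam x^2/2 + \mu xy$ is bilinear, $\vphiapx(x) = -\lam x^2/2 + \mu\Dy|x|/2$, and a direct calculation of the minimum of $\vphiapx(\cdot)+\lam(\cdot-x^*)^2$ gives $x^+_{apx} = x^*$. For the primal, the hypothesis $\mu^2 \le \lam\rho/2$ guarantees that the interior maximiser $y^*(x) = \mu x/\rho$ lies in $Y$ both at $x^*$ and at the subsequently computed $x^+_{\vphi}$, whence $\vphi(x) = -(\lam - \mu^2/\rho)x^2/2$ on the relevant interval; solving the resulting quadratic proximal subproblem in closed form yields $x^+_{\vphi} = 2\lam x^*/(\lam + \mu^2/\rho)$, and combining this with the bound $(\lam - \mu^2/\rho)/(\lam + \mu^2/\rho) \ge 1/3$ (immediate from $\mu^2/\rho \in [0,\lam/2]$) produces $|\vphi_{2\lam}'(x^*)| \ge 2\lam x^*/3 = \mu\Dy/3$.

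For case~2 ($\mu \ge \sqrt{\lam\rho/2}$), I set $\bar\mu := \sqrt{\lam\rho/2}$ and $\bar\rho := \rho$ (admissible since $\bar\mu \le \mu$, $\bar\rho \le \rho$, and $\lam\bar\rho = 2\bar\mu^2$) and take $f = F_{1,1,\lam,\bar\mu,\bar\rho}$. Because $\fapx_1(x,\cdot)$ is affine, its boundary maximiser $y^*_{apx}(x) = (\Dy/2)\sgn(\bar\mu x + \bar\rho\by)$ flips sign at $x_k := -\bar\rho\by/\bar\mu$; I choose $\by := -\Dy/8 \in Y$ and $x^* := x_k = \bar\mu\Dy/(4\lam)$. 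Adding $\lam x^2/2$ converts $\vphiapx$ into a piecewise linear convex function with a corner of slope jump $\bar\mu\Dy$ at $x_k$, so $\partial\vphiapx(x^*) = [-\bar\mu\Dy/2, \bar\mu\Dy/2] - \lam x_k$; the identity $\lam x_k = \bar\mu\Dy/4$ makes this interval contain $0$, giving $\vphiapx_{2\lam}'(x^*) = 0$. For the primal $\vphi(x) = -\lam x^2/2 + \bar\mu\Dy|x|/2 + \bar\rho\Dy^2/8$, the value $x^* = \bar\mu\Dy/(4\lam)$ is the largest positive $x^*$ for which $0$ still lies in the subdifferential of $\vphi(\cdot) + \lam(\cdot - x^*)^2$ at $x'=0$; hence $x^+_{\vphi} = 0$, and $|\vphi_{2\lam}'(x^*)| = 2\lam x^* = \bar\mu\Dy/2 = \sqrt{\lam\rho\Dy^2/8}$.

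The principal technical obstacle is the delicate triple alignment required in case~2: the surrogate kink $x_k$ (set by $\bar\rho\by/\bar\mu$), the primal-side threshold $\bar\mu\Dy/(4\lam)$ beyond which $0$ leaves the subdifferential of the primal's proximal objective at $x'=0$, and the test point $x^*$ must all coincide at the same value, while also respecting $\bar\mu \le \mu$, $\bar\rho \le \rho$, and $\by \in Y$. The choices $\bar\mu = \sqrt{\lam\rho/2}$, $\bar\rho = \rho$, $\by = -\Dy/8$ are essentially forced by these constraints; any looser parametrisation either destroys $\vphiapx_{2\lam}'(x^*) = 0$ or drives $|\vphi_{2\lam}'(x^*)|$ itself to zero.
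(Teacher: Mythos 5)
Your proof is correct and takes essentially the same route as the paper's: case~1 is identical (same $f$, $\by=0$, $x^*=\mu\Dy/(2\lam)$, same closed-form proximal points and the same factor $(\lam-\mu^2/\rho)/(\lam+\mu^2/\rho)\ge 1/3$). In case~2 you use the same mechanism---place $x^*$ at the kink of the surrogate primal induced by the linearization center and tune $\bar\mu$ so that $\lam\bar\rho$-type quantities align---with a different but equivalent parametrization ($\bar\rho=\rho$, $\by=-\Dy/8$ in place of the paper's $\bar\rho=\rho/4$, $\by=\Dy/2$), arriving at the same value $\sqrt{\lam\rho\Dy^2/8}$.
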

By combining~\cref{prop:lower-bound-1} with \odima{claims} 1, 3 of~\cref{lem:hard-properties}, we establish~\cref{eq:lower-bound-k} for~$k = 1$, and thus verify tightness of~\cref{th:upper-bound} in the case of first-order approximation, cf.~\cref{eq:upper-bound-k}, without imposing any restrictions on the problem parameters.  
More precisely, our lower bound~$\veps \gsim \min\{\mu\Dy, \sqrt{\lam\rho \Dy^2}\}$ on the approximation accuracy matches the upper bound~\cref{eq:upper-bound-k} for bilinearly-coupled objectives~\cref{eq:bilinear-coupling}, and replaces~$\Lxxp_1 = \lam + 2\tau_1 \Dy$ with~$\lam$ in the general case; this is only a minor modification since~$\Dy = O(\veps)$ is anyway required in order for the guarantees in Theorem~\ref{th:upper-bound} and~\cref{prop:lower-bound-1} to be applicable.

It remains to cover the case of approximation with~$k \ge 2$. 
To this end, we \odima{construct} instances of~\cref{opt:min-max} with~$f = F_{1,-1,\lam,\bar\mu,\rho}$ or~$f = F_{1,1,\lam,\bar \mu,\rho}$ for some~$\bar\mu \le \mu$.
\odima{As in the previous case~$(k = 1)$, these two instances allow to cover, respectively, the regimes or weak/strong interaction between the variables.} %

\begin{proposition}
\label{prop:lower-bound-k}
For~$\lam,\mu,\rho,\Dy > 0$ and~$k \ge 2$, set~$\fapx = \fapx_k$, cf.~\cref{def:fk}, with~$f, Y, \by$ to be defined, and let
\begin{equation}
\label{eq:mu-crit-k}
\mucrit := \sqrt{\frac{\lam \rho \Dy^{k-1}}{k!}}.
\end{equation}
\begin{comment}
\[
c_k = 
\left\{
\begin{aligned} 
\frac{4}{(k+1)!} \;\; \text{if~$k$ is even},\\
\frac{4}{k!} \;\; \text{if~$k$ is even},\\
\end{aligned}
 \right.
\]
\end{comment}
%
\begin{enumerate}
\item 
If~$\mu \le \mucrit$, then for~$Y = [0,\Dy]$ and~$f = F_{k,-1,\lam,\bar\mu,\rho}$ with some~$\bar \mu \le \mu$ one can find~$\by \in Y,$~$x^* \in \R$ such that~$\vphiapx_{2\lam}'(x^*)=0$ while
\[
|\vphi_{2\lam}'(x^*)| \ge \frac{\mu \Dy}{2k}
\]
\item 
If~$\mu \ge \mucrit$, then for~$Y = [-\half\Dy,\half\Dy]$ and~$f = F_{k,1,\lam,\bar{\mu},\rho}$ with some~$\bar{\mu} \le \mu$ one can find~$\by \in Y$,~$x^* \in \R$ such that~$\vphiapx_{2\lam}'(x^*)=0$ while
\[
|\vphi_{2\lam}'(x^*)| \ge \frac{\mucrit \Dy}{2k}
\]
\end{enumerate}
\end{proposition}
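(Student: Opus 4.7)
My plan is to generalize the strategy of Proposition~\ref{prop:lower-bound-1} (the $k=1$ case) to arbitrary $k \ge 2$, splitting according to the two regimes $\mu \le \mucrit$ and $\mu \ge \mucrit$. In each regime I will select $\by \in Y$, a subsidiary coupling $\bar\mu \le \mu$, and the candidate point $x^*$ explicitly, verify that $\vphiapx_{2\lam}'(x^*) = 0$ via a proximal-point computation for $\vphiapx$, and lower-bound $|\vphi_{2\lam}'(x^*)|$ via a second proximal-point computation for $\vphi$. For $\mu \le \mucrit$, I would take $\bar\mu = \mu/2$, $f = F_{k,-1,\lam,\bar\mu,\rho}$, and $\by = 0$: since the first $k$ derivatives of $-\rho y^{k+1}/(k+1)!$ vanish at $0$, the approximation collapses to $\fapx_k(x,y) = -\lam x^2/2 + \bar\mu xy$, giving $\vphiapx(x) = -\lam x^2/2 + \bar\mu\Dy(x)_+$ and identifying $x^* := \bar\mu\Dy/\lam$ as Moreau-stationary by a direct prox calculation. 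For the true $\vphi$, concavity in $y$ yields the interior maximizer $y^*(x) = (\bar\mu x k!/\rho)^{1/k}$ on the range $[0, \rho\Dy^k/(\bar\mu k!)]$ (which contains $x^*$), and the prox $u^*$ of $x^*$ then solves $h(u) := \lam u + A u^{1/k} = 2\lam x^*$ for an explicit $A$; concavity of $h$ together with the identity $h(x^*) = \bar\mu\Dy(1 + \alpha^{1/k})$ for $\alpha := (\bar\mu/\mucrit)^2 \le 1/4$ produces the lower bound $u^* - x^* \ge \bar\mu\Dy(1 - \alpha^{1/k})/[\lam(1 + \alpha^{1/k}/k)]$, whence $|\vphi_{2\lam}'(x^*)| = 2\lam(u^* - x^*) \ge \mu\Dy/(2k)$ for all $k \ge 2$.

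For $\mu \ge \mucrit$ I would use a ``kink-shift'' construction: take $f = F_{k,1,\lam,\bar\mu,\rho}$ on $Y = [-\Dy/2,\Dy/2]$ with $\bar\mu := 2\mucrit\sqrt{C_k/(k+1)}$ (where $C_k := 1$ for $k$ odd and $C_k := 1 - 2^{-k}$ for $k$ even, so that $\bar\mu \le \mucrit \le \mu$), and $\by = -\Dy/2$. Convexity of $f$ in $y$ yields $\vphi(x) = -\lam x^2/2 + \bar\mu|x|\Dy/2 + \const$ with a kink at $x = 0$. Since $\by < 0$, the Taylor coefficients of $|y|^{k+1}$ at $\by$ coincide with those of the smooth branch $(-y)^{k+1}$; a binomial identity then gives $\fapx_k(x,-\Dy/2) = f(x,-\Dy/2)$ while $\fapx_k(x,\Dy/2) - f(x,\Dy/2)$ equals a nonzero constant $\delta_k$ of order $C_k\rho\Dy^{k+1}/(k+1)!$ (with sign depending on parity). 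Assuming the max of $\fapx_k(x,\cdot)$ on $Y$ is at $\pm\Dy/2$ (addressed below), $\vphiapx$ inherits the $|\cdot|$-kink structure but shifted to $x_1 := -\delta_k/(\bar\mu\Dy)$, and the calibration of $\bar\mu$ ensures $|x_1| = \bar\mu\Dy/(4\lam)$. A prox computation then shows $\vphiapx_{2\lam}'(x^*) = 0$ at $x^* := x_1$; on the $\vphi$ side, the prox of $x^*$ lands at $u = 0$ (since $|x^*|$ is exactly at the subdifferential threshold for the $|u|$ term), so $\vphi_{2\lam}'(x^*) = 2\lam x^* = \pm\bar\mu\Dy/2$, yielding $|\vphi_{2\lam}'(x^*)| = \bar\mu\Dy/2 \ge \mucrit\Dy/(2k)$ by the definition of $\bar\mu$.

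The main obstacle lies in this second subcase: for $k \ge 3$, $\fapx_k(x,\cdot)$ is a degree-$k$ polynomial in $y$ that is not automatically convex on $Y$, so the ``boundary-max'' hypothesis above can fail. For instance, with $\by = -\Dy/2$ and $k = 3$, one can verify that at $x = x_1$ an interior critical point of $\fapx_k(x_1,\cdot)$ enters $Y$ and strictly exceeds the boundary values, so $\vphiapx(x_1)$ is attained at an interior point rather than $\pm\Dy/2$. Handling this will require either a refined parity-dependent choice of $\by$ that pushes the real roots of $\partial_y\fapx_k(x,\cdot)$ (a degree-$(k-1)$ polynomial in $y$) out of $Y$ for $x$ near $x^*$, or a direct envelope-theorem argument that tracks the interior maximizer $y^*(x)$ and analyzes the corresponding smooth branch of $\vphiapx$ in a neighborhood of $x_1$ via its Moreau envelope. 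A secondary technicality, the non-smoothness of $|y|^{k+1}$ at $y = 0$ for $k$ even, is neatly avoided by keeping $\by = -\Dy/2$ strictly away from $0$.
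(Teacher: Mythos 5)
Your first case ($\Lxy \le \mucrit$) follows the paper's route essentially verbatim --- same $\bar\mu = \mu/2$, same $\by = 0$, same $x^* = \bar\mu\Dy/\lam$ --- and your concavity bound on the prox equation is interchangeable with the paper's bracketing $2x - \tfrac{\bar\mu}{\lam}(2x\bar\mu k!/\rho)^{1/k} \le x^+(x) \le 2x$; that part is sound (you should still record that the prox point, not just $x^*$, stays in the range where the interior maximizer formula for $\vphi$ is valid, which holds since $x^+ \le 2x^*$). In the second regime, your ``kink-shift'' construction with $\by = -\Dy/2$ and $\bar\mu$ calibrated so that the kink of $\vphiapx$ sits at distance exactly $\bar\mu\Dy/(4\lam)$ from that of $\vphi$ is a legitimate variant of the paper's even-$k$ argument (the paper instead takes $\by = +\Dy/2$, $\bar\mu = \mucrit$, and lets the kink land where the algebra puts it). For \emph{even} $k$ your boundary-max hypothesis is in fact a theorem, not an assumption: $\partial^2_{\y^2}\fapx_k(x,y) = \tfrac{\rho}{(k-1)!}\left[(-y)^{k-1} - (\by - y)^{k-1}\right] \ge 0$ on $Y$ because $k-1$ is odd and $-y \ge \by - y$; this mirrors the paper's convexity check of the polynomial $p(z)$ in \cref{eq:p-poly}, and you need to state and prove it rather than leave it hypothetical.

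The genuine gap is the odd-$k$ subcase of the regime $\Lxy \ge \mucrit$, which you correctly diagnose but do not close. For odd $k \ge 3$ the surrogate $\fapx_k(x,\cdot)$ is convex on only part of $Y$ and concave on the rest, an interior maximizer enters $Y$ near your candidate $x^*$, and the entire kink-based computation of $\vphiapx_{2\lam}'$ collapses --- so half of claim 2 is simply unproved in your write-up. The paper resolves this by \emph{embracing} the interior maximizer rather than trying to push it out: it takes $\by = (1-1/k)\Dy/2$ and the specially tuned $\bar\mu$ of \cref{eq:mu-bar-odd-k}, reduces stationarity of $\fapx_k(x^*,\cdot)$ at $x^* = -(1-1/k)\bar\mu\Dy/(2\lam)$ to the equation $w^k - (w-1)^k = 2^k - 1$, identifies its only two real roots $w \in \{-1,2\}$ via strict convexity of $w \mapsto w^k - (w-1)^k$, checks that only $w = -1$ lies in the range corresponding to $Y$, confirms by the second derivative that it is the global maximizer, and then applies Danskin's theorem to get $\vphiapx'(x^*) = 0$ before computing $\vphi_{2\lam}'(x^*) = \bar\mu\Dy/k$ by soft thresholding. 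This is the delicate core of the proposition (the tuning of $\by$ and $\bar\mu$ is exactly what makes the stationarity equation solvable in closed form), and neither of the two remedies you sketch is carried out.
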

By combining~\cref{prop:lower-bound-k} with claims 1 and 3 of~\cref{lem:hard-properties}, we establish~\cref{eq:lower-bound-k} for~$k \ge 2$, 
Thus, we verify that for BC objectives, the bound in~\cref{th:upper-bound} is tight, up to the two terms in~\cref{eq:upper-bound-k} being divided by~$O(k)$ and~$O(\sqrt{k})$ correspondingly; 
removing this gap is left for future work.

\paragraph{On the proofs of~\cref{prop:lower-bound-0,prop:lower-bound-1,prop:lower-bound-k}.}
While our choice of objective functions used in~\cref{prop:lower-bound-0,prop:lower-bound-k} is quite natural (except, perhaps, for~\cref{eq:hard-quadratic} used in the case~$k = 0$), the actual proofs of these propositions, as given in~\cref{sec:lower-proofs}, require \odima{substantial} technical work and are way less straightforward than the proof of~\cref{th:upper-bound}.
In all three cases, the analysis relies upon carefully choosing the center~$\by$ of Taylor expansion and exhibiting~$x^*$ which is stationary for~$\vphiapx_{2\lam}$, yet such that~$| \vphi_{2\lam}'(x^*)|$ is large. The choices of~$\by, x^*$ depend on the problem parameters and approximation order; in particular, they are different in the two regimes of strong/weak coupling.
The analysis in Proposition~\ref{prop:lower-bound-k} is especially delicate, notably due to our ambitious goal of matching the bound of~\cref{th:upper-bound} up to a {\em polynomial in~$k$} gap.
(For example, the reader may verify that merely replacing~$Y = [0,\Dy]$ with~$[-\half\Dy,\half\Dy]$ in the case~$\mu < \mucrit$ would result in the extra factor of~$2^k$.)\\ 

We \odima{conjecture} that the extra~$O(1/k)$ factor in~\cref{prop:lower-bound-k}, as compared to~\cref{eq:upper-bound-k}, can be eliminated. However, this might require analyzing a different family of problems, as our analysis of~\cref{eq:hard-quadratic} seems to be tight. 
In any case, such an improvement is of moderate practical interest: approximation with~$k \gg 1$ does not lead to efficient algorithms for searching FOSPs: as we shall see next, such schemes rely on solving the nested maximization problem, which becomes a daunting task for~$k > 2$. 

\section{Efficient algorithms for the search of first-order stationary points}
\label{sec:algos}

Our goal in this section is to implement step~\proofstep{2} of the strategy outlined in the introduction. 
To this end, we exploit the guarantee in~\cref{th:upper-bound} by replacing the task of finding FOSPs in~\cref{opt:min-max} with that of finding FOSPs in~\cref{opt:min-max-apx} with~$k \in \{0,1,2\}$, and propose efficient algorithms for solving the latter task.
Overall, as~$k$ increases, the proposed algorithms require access to higher derivatives of~$f$ in~$y$ and their oracle complexity estimates also deteriorate (especially when transitioning from~$k = 1$ to~$k = 2$).
On the other hand, increasing~$k$ allows us to handle larger diameter of~$Y$ for the same~$\veps$. %

Note that, despite the fact that our ``approximation theory'' in~\cref{sec:upper,sec:lower} carefully handles the general case of arbitrary~$k$, we do not propose algorithms based on Taylor approximations of order~$k \ge 3$. This is because already for cubic approximation (i.e., when~$k = 3$), solving the nested maximization problem becomes a daunting task: as it is shown in~\cite[Theorem~4]{nesterov2003random}, maximization of a general trilinear form is NP-hard even when it is available explicitly (in its tensor representation).

\begin{comment}
\odima{This is because computing the approximate \odima{max-function}~$\vphiapx(x) = \max_{y \in Y}\fapx_k(x,y)$ for~$k \ge 3$ reduces to maximizing a general (nonconvex) polynomial with degree~$k \ge 3$; 
meanwhile, maximization of a general cubic or quartic form on a Euclidean ball is known to be NP-hard even when the form is available {\em explicitly} (see~\cite{nesterov2003random}).}
\end{comment}
%
%

\subsection{Algorithms based on the constant and linear approximations}
In the case~$k = 0$, we fix arbitrary~$\by \in Y$ and consider~\cref{opt:min-max-apx} with objective~$\fapx_0 (x,y) = f(x,\by)$, that is
\begin{equation}
\label{opt:min-max-apx-0}
\min_{x \in X} f(x,\by). 
\tag{${\mathsf{P}}_0$}
\end{equation}
This is a nonconvex {\em minimization} problem with a smooth objective~$\vphiapx(x) = f(x,\by)$, so we can find a near-stationary point via projected gradient descent. 
This approach is summarized in~\cref{alg:zeroth-order}. 
It produces a point~$x_T$ satisfying~$\|\nabla \vphiapx(x_T)\| \lsim \veps$ in~$O(1/\veps^2)$ iterations, with one projected gradient step (in~$x$) per iteration.  
Using~\cref{lem:fne-to-moreau} stated below, this implies~$\|\nabla \vphiapx_{2\lam}(x_T)\| \lsim \veps$ in terms of the approximate Moreau envelope, which then results in the desired guarantee~$\|\nabla\vphi_{2\lam}(x_T)\| \lsim \veps$ by applying~\cref{th:upper-bound} with~$k = 0$. 
We shall rigorously state these results later on in~\cref{th:algo-0}.

Another approach we advocate here is based upon focusing on the nonconvex-affine problem
\begin{equation}
\label{opt:min-max-apx-1}
\min_{x \in X} \max_{y \in Y} \fapx_1(x,y),
\tag{${\mathsf{P}}_1$}
\end{equation}
which corresponds to~\cref{opt:min-max-apx} with~$k = 1$ (the choice of~$\by \in Y$ is again arbitrary).
General nonconvex-concave problems can be solved by a simple gradient descent-ascent (GDA) scheme combined with quadratic regularization in~$y$, with iteration complexity~$O(\veps^{-5})$~\cite{nouiehed2019solving}.\footnotemark\; 
More elaborate algorithmic schemes based on the proximal-point method have~$O(\veps^{-3})$ iteration complexity~\cite{ostrovskii2020efficient,kong2019accelerated,thekumparampil2019efficient, zhao2020primal}.\footnotetext{This estimate follows from the complexity~$O(\ex^{-2}\ey^{-3/2})$ of finding an~$(\ex,\ey)$-approximate first-order Nash equilibrium by such method, see~\cite[p.~3]{ostrovskii2020efficient}, combined with~\cite[Proposition~5.5]{ostrovskii2020efficient} which verifies that the~$x$-component of such a point is an~$O(\ex)$-FOSP as long as~$\ey = O(\ex^2)$. 
Note that the well-known result~\cite[Proposition~4.12]{lin2019gradient} commonly used for such a reduction in recent works, is erroneous---see the discussion immediately following~\cite[Proposition~5.2]{ostrovskii2020efficient}.}
Here we propose a  GDA-type scheme in the form of~\cref{alg:first-order}.
With it we manage to guarantee~$O(\veps^{-2})$ iteration complexity, by exploiting the special properties of~\cref{opt:min-max-apx-1}:
\begin{itemize}
\item $\fapx_1(x,\cdot)$ is affine, so~$\fapx_1(x,\cdot) + \tfrac{\rho}{2} \|\cdot - \by\|^2$ is maximized via a single projected gradient ascent step.
\item \cref{opt:min-max-apx-1} has to be solved in the regime~$\veps \gsim \Dy \min\{\Lxy, {(\Lxxp_{1} \rho_1)}^{1/2}\}$ where~$\veps$-FOSPs for~\cref{opt:min-max-apx-1} translate to~$\veps$-FOSPs for~\cref{opt:min-max} via~\cref{th:upper-bound} (cf.~\cref{eq:upper-bound-k}~with~$k = 1$). 
This allows to regularize with~$\rho = \rho_1$ and results in~$\veps$-independent smoothness~$O(\Lxxp_1 + \Lxy^2/\rho_1)$ of the corresponding \odima{max-function}.
\end{itemize}

{\flushleft
\begin{minipage}{0.42\textwidth}
\begin{algorithm}[H]
\caption{FOSP search using~$\fapx_0(x,\cdot)$}
\label{alg:zeroth-order}
\begin{algorithmic}[1]
\Require{$x_0 \in X$;~$\by \in Y$;~$\gamx > 0$;~$T \in \N$}
\State $x^* = x_0$; \; $\veps^* = +\infty$
\For{$t \in \{0, 1, ..., T-1\}$} 
\State $\wt x_{t+1} = x_{t} - \gamx \gx f(x_{t},\by)$
\State $x_{t+1} = \proj_{X}[\wt x_{t+1}]$ %
\LineComment{{\em Maintain the best iterate}}
\State $\veps_{t}^2 = \| \gx f(x_{t},\by) \|^2 - \frac{1}{\gamx^2} \|\wt x_{t+1} - x_{t+1} \|^2$
\label{line:eps_t-0}
\vspace{-0.4cm}
\If{$\veps_t < \veps^*$} 
\State $x^* = x_{t}$; \; $\veps^* = \veps_t$
\EndIf
\EndFor 
\Ensure{$x^*$}
\end{algorithmic}
\end{algorithm}
\vspace{2.35cm}
\end{minipage}
\begin{minipage}{0.58\textwidth}
\flushright
\begin{algorithm}[H]
\caption{FOSP search using~$\fapx_1(x,\cdot)$}
\label{alg:first-order}
\begin{algorithmic}[1]
\Require{$x_0 \in X$;~$\by \in Y$;~$\gamx,\gamy > 0$;~$T\in\N$;~$\Coupled\in\{0,1\}$}
\State $x^* = x_0$; \; $y^* = y_{t}$; \; $\veps^* = +\infty$ %
\For{$t \in \{0, 1, ..., T-1\}$} 
\If{$\Coupled$} 
\State $y_{t} = \proj_{Y}[\by + \gamy \gy f(x_{t},\by)]$
\label{line:y_t-strong}
\State $\wt x_{t+1} = x_{t} - \gamx [\gx f(x_{t},\by) + \hxy f(x_{t},\by) (y_{t} - \by)]$
\label{line:x_t-strong}
\Else 
\State Choose~$y_{t} \in \Argmax_{y \in Y} \lang \gy f(x_{t},\by), y\rang$
\label{line:y_t-weak}
\State $\wt x_{t+1} = x_{t} - \gamx \gx f(x_{t},y_{t})$
\label{line:x_t-weak}
\EndIf
\State $x_{t+1} = \proj_{X}[\wt x_{t+1}]$ %
\label{line:x_t-proj}
\LineComment{{\em Maintain the best iterate}}
\State $\veps_{t}^2 = \frac{1}{\gamx^2} \| \wt x_{t+1} - x_{t} \|^2 - \frac{1}{\gamx^2} \|\wt x_{t+1} - x_{t+1} \|^2$
\label{line:eps_t-1}
\If{$\veps_t < \veps^*$} 
\State $x^* = x_{t}$; \; $y^* = y_{t}$; \; $\veps^* = \veps_t$ 
\EndIf
\EndFor 
\Ensure{$x^*$} %
\end{algorithmic}
\end{algorithm}
\end{minipage}
}

\cref{alg:first-order} admits~$O(\veps^{-2})$ iteration complexity estimate as in the case of~\cref{alg:zeroth-order}. %
That said, compared to~\cref{alg:first-order}, the new algorithm has two advantages: a slightly increased range of available accuracies in the strongly-coupled regime~$\mu^2 \ge \Lxxp_1 \rho_1$; a smaller leading factor in the complexity estimate -- depending on the (primal) suboptimality gap, rather than the full duality gap. %
However, these improvements come at a price: computing~$\gx \fapx_{1}(x,y)$ requires access to a partial Hessian-vector product oracle for~$f$, namely
\begin{equation}
\label{eq:hess-product-oracle}
(x,y) \mapsto \hxy f(x,\by)(y-\by),
\end{equation}
since
$
\gx \fapx_{1}(x_t,y_t) = \gx f(x_t,\by) + \hxy f(x_t,\by)(y_t-\by).
$
On the other hand, for weakly-coupled problems\odima{---}i.e., when~$\mu^2 \le \Lxxp_1 \rho_1$\odima{---}\cref{alg:first-order} uses a simplified approach: (i) the descent step is performed in the negative direction of~$\gx f(x_t,y_t)$ instead of~$\gx \fapx_1(x_t,y_t)$; 
(ii) the gradient ascent step is replaced by the full maximization of the linear model~$\fapx_1(x_t,\cdot)$. %
These two properties allow us to avoid Hessian-vector product~\cref{eq:hess-product-oracle}, and also to access~$Y$ through the (weaker) linear maximization oracle.

Next we state convergence guarantees for~\cref{alg:zeroth-order,alg:first-order} (see~\cref{sec:algos-proofs} for the proofs). %

\begin{theorem}
\label{th:algo-0}
Grant~\cref{ass:gradx}. Running~\cref{alg:zeroth-order} with~$\gamx=\frac{1}{\Lxx}$ and number of iterations
\begin{equation}
\label{eq:zeroth-order-complexity}
T \ge \frac{300 \lam [ \vphi(x_0) - \psi(\by) ]}{\veps^2},
\end{equation}
where~$\psi(y) := \min_{x \in X} f(x,y)$ is the dual function of~\cref{opt:min-max}, 
guarantees~$\|\nabla\vphiapx_{2\Lxx}(x^*)\| \le \veps / 6$. 
Moreover, we have~$\|\nabla\vphi_{2\Lxx}(x^*)\| \le \veps$\odima{---}i.e., in terms of initial problem~\cref{opt:min-max}\odima{---provided that}
$24\Lxy\Dy \le \veps.$
\end{theorem}

Note that the factor~$\vphi(x_0) - \psi(\by)$ in~\cref{eq:zeroth-order-complexity} is the duality gap for the point~$(x_0, \by)$; 
by weak duality, it is lower-bounded by the sum of the dual gap~$\max_{y \in Y}\psi(y) - \psi(\by)$ and the primal gap
\begin{equation}
\label{def:prim-gap}
\Gap := \vphi(x_0) - \min_{x \in X} \vphi(x).
\end{equation}
As we shall see next,~\cref{alg:first-order} admits a slightly different (and typically better) complexity estimate, in which the full duality gap is replaced with the primal gap, and~$\Lxx$ with~$O(\Lxxp_1 + \Lxy^2/\rho_1)$.
In addition, we relax the condition~$24\Lxy \Dy \le \veps$, as imposed in Theorem~\ref{th:algo-0},
by replacing~$\Lxy$ with~$O(\min\{\Lxy,(\Lxxp_1 \rho_1)^{1/2}\})$.

\begin{theorem}
\label{th:algo-1}
Grant~\cref{ass:gradx,ass:tensy,ass:measurable} for~$k = 1$, let~$\Lxxp_1 = \Lxx + 2\tau_1\Dy$ (cf.~\cref{eq:gx-lip}), and assume that %
\begin{equation}
\label{eq:first-order-diameter}
200 \min \{ \Lxy, {(\Lxxp_{1} \rho_1)}^{1/2} \} \Dy \le \veps.
\end{equation}
Running~\cref{alg:first-order} with~$\gamx = \frac{1}{3\Lxxp_1 + \mu^2/\rho_1}$,~$\Coupled = \ind\{\mu \ge {(\Lxxp_1 \rho_1)^{1/2}}\}$ and~$\gamy = \frac{1}{\rho_1}$ if~$\Coupled = 1$, for 
\begin{equation}
\label{eq:first-order-complexity}
T \ge \left(3 + \frac{\mu^2}{\Lxxp_1\rho_1}\right) \left( \frac{700\Lxxp_1 \Gap}{\veps^2}  + 1 \right)
\end{equation}
iterations, with~$\Gap$ being the initial gap as per~\cref{def:prim-gap}, results in~$x^* \in X$ for which~$\|\nabla\vphi_{2\Lxxp_1}(x^*)\| \le \veps$.
\end{theorem}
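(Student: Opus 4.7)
The plan is to combine Theorem~\ref{th:upper-bound} with a convergence analysis of~\cref{alg:first-order} on the surrogate problem~\cref{opt:min-max-apx-1}. Concretely, under~\cref{eq:first-order-diameter} the premise of~\cref{th:upper-bound} (with~$k=1$) is satisfied, so it suffices to show that~\cref{alg:first-order} outputs~$x^*$ with~$\|\nabla\vphiapx_{2\Lxxp_1}(x^*)\|\le\veps/6$; the conclusion~$\|\nabla\vphi_{2\Lxxp_1}(x^*)\|\le\veps$ then follows. I would split the analysis along the two branches of~$\Coupled$.

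In the strongly coupled branch~$\Coupled=1$, i.e.,~$\mu\ge(\Lxxp_1\rho_1)^{1/2}$, I would introduce the auxiliary regularized surrogate
\[
\tilde\vphi(x):=\max_{y\in Y}\Big\{\fapx_1(x,y)-\tfrac{\rho_1}{2}\|y-\by\|^2\Big\}
\]
and argue that lines~\ref{line:y_t-strong}--\ref{line:x_t-strong} implement a projected gradient-descent step on~$\tilde\vphi$. Since~$\fapx_1(x,\cdot)$ is affine, the inner objective is~$\rho_1$-strongly concave, and a single projected ascent step from~$\by$ with stepsize~$1/\rho_1$ returns the exact inner maximizer~$y^*(x)$; hence~$y_t=y^*(x_t)$. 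By Danskin's theorem (\cref{lem:danskin}), the bracketed expression in line~\ref{line:x_t-strong} equals~$\nabla\tilde\vphi(x_t)$. Using the contraction bound~$\|y^*(x)-y^*(x')\|\le(\mu/\rho_1)\|x-x'\|$ (from non-expansiveness of the projection and~$\mu$-Lipschitzness of~$\gy f(\cdot,\by)$) together with~\cref{ass:gradx,ass:tensy} (the latter supplies~$\tau_1$), a short computation yields that~$\nabla\tilde\vphi$ is~$L$-Lipschitz with~$L\le\Lxxp_1+\mu^2/\rho_1\le 1/\gamx$. The standard descent lemma then guarantees the existence of~$t<T$ with squared gradient mapping~$\lesssim L[\tilde\vphi(x_0)-\min_X\tilde\vphi]/T$, and the quantity~$\veps_t^2$ of line~\ref{line:eps_t-1} is (by Pythagoras for~$\proj_X$) at least the squared gradient-mapping norm, so~$\veps_t\le\veps/C$ after~$T$ iterations of the claimed order. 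Finally, to cross from~$\tilde\vphi$ to~$\vphiapx$, note that~$0\le\vphiapx(x)-\tilde\vphi(x)\le\rho_1\Dy^2/2$, which under~\cref{eq:first-order-diameter} is~$O(\veps^2/\Lxxp_1)$; a direct repetition of the proof of~\cref{prop:upper-uniform} turns this uniform bound into~$\|\nabla\vphiapx_{2\Lxxp_1}(x^*)-\nabla\tilde\vphi_{2\Lxxp_1}(x^*)\|=O(\veps)$, and combined with~$\|\nabla\tilde\vphi_{2\Lxxp_1}(x^*)\|\le\|\nabla\tilde\vphi(x^*)\|$ (valid since~$\tilde\vphi$ is smooth) this closes the argument.

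In the weakly coupled branch~$\Coupled=0$, i.e.,~$\mu<(\Lxxp_1\rho_1)^{1/2}$, the point~$y_t$ in line~\ref{line:y_t-weak} is an \emph{exact} maximizer of the affine function~$\fapx_1(x_t,\cdot)$ over~$Y$, so by Danskin~$\gx\fapx_1(x_t,y_t)\in\partial\vphiapx(x_t)$; note that~$\vphiapx$ is~$\Lxxp_1$-weakly convex by~\cref{lem:gx-lip}. The descent direction used in line~\ref{line:x_t-weak}, however, is~$\gx f(x_t,y_t)$, which by~\cref{lem:gx-err} (with~$k=1$) differs from~$\gx\fapx_1(x_t,y_t)$ by at most~$2\mu\Dy$, hence by~$O(\veps)$ under~\cref{eq:first-order-diameter}. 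I would then invoke an inexact-gradient analogue of the Davis--Drusvyatskiy descent analysis for weakly convex functions: expanding~$\vphiapx+\Lxxp_1\|\cdot-x_t\|^2$ around~$x_{t+1}$ and accounting for an~$O(\veps)$ perturbation in the subgradient yields a per-iteration decrease of~$\Omega(\veps_t^2/\Lxxp_1)-O(\veps^2/\Lxxp_1)$ in~$\vphiapx$; telescoping over~$T$ iterations and using the initial gap bound~$\vphiapx(x_0)-\min_X\vphiapx\le\Gap+\rho_1\Dy^2/2\lesssim\Gap$ (via~\cref{lem:fval-err} and~\cref{eq:first-order-diameter}) produces~$\veps_{t^*}\le\veps/6$ for the best iterate, which by~\cref{prop:moreau-to-primal} implies~$\|\nabla\vphiapx_{2\Lxxp_1}(x^*)\|\le\veps/6$.

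The main obstacle I anticipate is the weakly coupled branch: running descent with an \emph{inexact} (approximate-subgradient) direction on a \emph{non-smooth, merely weakly convex} function, while still controlling the Moreau-envelope gradient at the best iterate. The key lever is the precise form of~\cref{eq:first-order-diameter}, which simultaneously bounds the gradient error, the gap between~$\vphi$ and~$\vphiapx$, and the step size-smoothness tradeoff; this is what allows every~$O(\veps)$ error term to be absorbed into the final~$\veps/6$ target. The remaining bookkeeping between~$\vphi,\vphiapx,\tilde\vphi$ and their Moreau envelopes is then mechanical.
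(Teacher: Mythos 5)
Your overall architecture matches the paper's: reduce to showing $\|\nabla\vphiapx_{2\Lxxp_1}(x^*)\|\le\veps/6$ on the surrogate and invoke \cref{th:upper-bound}, splitting on \Coupled. In the strongly coupled branch you and the paper do essentially the same thing: your $\tilde\vphi$ is exactly the paper's $\wt\vphi_{\reg}$ from \cref{eq:freg-apx}, and the identification of lines \ref{line:y_t-strong}--\ref{line:x_t-strong} as an exact projected gradient step on this $(\Lxxp_1+\mu^2/\rho_1)$-smooth function is the heart of the paper's argument too. Where you diverge is in crossing back from $\tilde\vphi$ to $\vphiapx$: you propose a uniform-approximation bound $0\le\vphiapx-\tilde\vphi\le\rho_1\Dy^2/2$ fed into a \cref{prop:upper-uniform}-style estimate on the Moreau gradients, whereas the paper uses \cref{lem:fne-to-moreau} (inequality \cref{eq:fne-to-moreau-1-gen}) together with the bound $\Sy\le\rho_1\Dy$. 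Both routes cost $O(\sqrt{\Lxxp_1\rho_1}\,\Dy)=O(\veps)$ under \cref{eq:first-order-diameter}, and yours is arguably more self-contained since it reuses machinery already in the paper rather than the external Proposition 5.5. One caveat: your step ``$\|\nabla\tilde\vphi_{2\Lxxp_1}(x^*)\|\le\|\nabla\tilde\vphi(x^*)\|$'' is only an unconstrained statement (and even there the correct constant is $2$, via $\Lxxp_1$-strong convexity of the prox objective); when $X\ne\XX$ the quantity your telescoping controls is the projected gradient mapping $\veps_t=\Sx(x_t,\nabla\tilde\vphi(x_t),1/\gamx)$, not the gradient norm, so you need the $\Sx$-to-Moreau conversion of \cref{lem:fne-to-moreau}/\cref{prop:moreau-to-primal} rather than this one-liner.

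The weakly coupled branch is where your route genuinely departs from the paper's, and also where your sketch has a soft spot. The paper regularizes the \emph{original} objective, $f_{\reg}=f-\tfrac{\delta}{2}\|y-\by\|^2$ with $\delta=\mu^2/\Lxxp_1$, so that $\vphi_{\reg}$ is a bona fide $C^1$ function with $2\Lxxp_1$-Lipschitz gradient, applies the ordinary descent lemma to it (absorbing the mismatch between the direction $\gx f(x_t,y_t)$ actually used and $\nabla\vphi_{\reg}(x_t)$ at a cost of $\mu^2\Dy^2/\Lxxp_1$ per iteration), and only converts to $\gx\fapx_1$ and to the Moreau envelope at the very end. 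You instead propose telescoping $\vphiapx$ directly via an ``inexact-gradient Davis--Drusvyatskiy'' argument. As literally written this does not work: expanding the convex function $\vphiapx+\Lxxp_1\|\cdot-x_t\|^2$ via a subgradient at $x_t$ gives a \emph{lower} bound on $\vphiapx(x_{t+1})$, which is the wrong direction for establishing descent, and the generic weakly-convex subgradient analysis telescopes the Moreau envelope and yields only $O(\veps^{-4})$ complexity, not the claimed $O(\veps^{-2})$. What rescues the argument---and what you gesture at but do not make explicit---is that $\partial\vphiapx(x_t)$ has diameter at most $\mu\Dy$ (all active gradients $\gx\fapx_1(x_t,y)$ differ by at most $\mu\|y-y'\|$), so applying the descent lemma to each smooth slice $\fapx_1(\cdot,y)$ and taking the max yields a genuine upper bound $\vphiapx(x_{t+1})\le\vphiapx(x_t)+\langle\gx\fapx_1(x_t,y_t),x_{t+1}-x_t\rangle+\Lxxp_1\|x_{t+1}-x_t\|^2+O(\mu^2\Dy^2/\Lxxp_1)$, after which your telescoping goes through. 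This clustered-subgradient computation is exactly what the paper's $f_{\reg}$ device packages cleanly; you should either carry it out explicitly or adopt the regularization. With that repaired (plus the $\Sx$-to-Moreau conversion noted above), your proof closes.
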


\begin{remark}
\label{rem:eps_t}
As a criterion for selecting the ``best'' iterate, in~\cref{alg:zeroth-order,alg:first-order} we use the quantity
\[
\begin{aligned}
\veps_{t}^2 
= \frac{1}{\gamx^2} \left( \| \wt x_{t+1} - x_{t} \|^2 - \|\wt x_{t+1} - x_{t+1} \|^2 \right),
\end{aligned}
\]
where~$\wt x_{t+1}$ is the result of the gradient descent step from~$x_{t}$ prior to projection (i.e.,~$x_{t+1} = \proj_{X}[\wt x_{t+1}]$).
In fact,~$\veps_{t} = \Sx(x_t,\tfrac{1}{\gamx}(x_t - \wt x_{t+1}),\tfrac{1}{\gamx})$, where~$\Sx$ is the functional used in the proof of~\cref{prop:upper-coupled}.
Using this criterion instead of the gradient norm (which is a weaker criterion, cf.~\cite[Theorem~4.3]{barazandeh2020solving}) allows to work with the Moreau envelope 
(cf., in particular,~\cref{lem:fne-to-moreau} in~\cref{sec:algos-proofs}), and seems to be necessary already in the nonconvex-concave setup (see~\cite[Proposition~5.5]{ostrovskii2020efficient}).
\end{remark}

\subsection{An algorithm based on the quadratic approximation}
\label{sec:algo-quad}

Finally, we propose a more sophisticated method in which we focus on the quadratic approximation
\begin{equation}
\label{opt:min-max-apx-2}
\min_{x \in X} \max_{y \in Y} 
\fapx_{2}(x,y).
\tag{${\mathsf{P}}_2$}
\end{equation}

(Recall that $\hat{f}_{2}(x,\cdot)$ is the quadratic approximation of $f(x,\cdot)$ in y with arbitrary choice of $\hat{y}\in Y$
 cf.~\cref{eq:0-1-models}.) Compared to the previous ones, the approach we are about to present requires Y to be a Euclidean ball in $\mathbb{R}^{d}$. This restriction is necessary for computational tractability when solving the maximization subproblem. While $l_{\infty}$-norm constraints are common in applications like adversarial training, maximizing a potentially nonconcave (or even convex) quadratic function over the $l_{\infty}$-ball (hypercube) is generally NP-hard. It is known (see, e.g., \cite{de2008complexity}) that this problem does not admit an efficient PTAS unless P=NP. Similar intractability results hold for the $l_1$-ball~\cite{vavasis1991nonlinear}. The tractability under the $l_2$-constraint enables our algorithmic approach here. For simplicity, we shall assume that Y has diameter precisely~$\Dy$, and is origin-centered---i.e.,~$Y = B_d(\Dy)$ where \odima{we define}
$
B_d(\Dy) := \{y \in \R^d: \|y\| \le \textstyle\half\Dy\},
$
\odima{the scaled Euclidean ball.}
Note that centering~$Y$ in the origin is not a limitation:~\cref{ass:gradx,ass:tensy} are preserved under shifts of~$y$.
\odima{Our construction rests upon the following two observations.}

\odima{Firstly, we observe,} following~\cite{davis2018stochastic}, that an~$(\frac{\veps}{6},2\Lxxp_2)$-FOSP in~\cref{opt:min-max-apx-2} can be found by running~$O(\veps^{-4})$ iterations 
of a (projected) subgradient scheme on the associated to~\cref{opt:min-max-apx-2}
\odima{max-function}~$\vphiapx(x)$, %
which is~$\Lxxp_2$-weakly convex (cf.~\cref{lem:gx-lip}). 
By~\cref{th:upper-bound}, this also gives an~$(\veps,2\Lxxp_2)$-FOSP in~\cref{opt:min-max-apx-2}. 
Each iteration of the subgradient scheme amounts to alternating between a maximization step in~$y$, i.e., finding~$y^* = y^*(x) \in \Argmax_{y \in Y}\fapx_2(x,y)$, and a projected gradient descent step on~$\fapx_2(\cdot,y^*)$.
Moreover, the analysis in~\cite{davis2018stochastic} (cf. also~\cite[Theorem~31]{jin2020local}) shows that this complexity is preserved under objective value errors of up to~$O(\veps^2/\Lxxp_2)$ in the maximization step. 

\odima{Secondly, we observe} that, despite the corresponding objective~$\fapx_2(x,\cdot)$ being nonconcave, the maximization steps can be efficiently performed by running a first-order algorithm on~$\fapx_2(x,\cdot)$. To this end, we make use of the recent result of~\cite{carmon2020first}, who showed that the Krylov subspace of dimension~$\wt O(\Dy\delta^{-1/2})$ contains a~$\delta$-accurate maximizer of a nonconcave quadratic form on a Euclidean ball.
Krylov-type schemes can usually be efficiently implemented via a Lanczos-type method (see~\cite{gould1999solving}), with~$\wt O(\Dy\delta^{-1/2})$ matrix-vector products  to find a~$\delta$-accurate maximizer. %

\vspace{0.1cm}

Below we present~\cref{alg:second-order} which adapts the general subgradient scheme~\cite{davis2018stochastic} to the present situation, assuming access to an abstract maximization oracle~$\ApproxMax(\fapx_2(x,\cdot),Y,\delta)$ returning a~$\delta$-accurate maximizer of~$\fapx_2(x,\cdot)$ over~$Y$. 
Efficient implementation of this oracle, in the form of~\cref{alg:carmon} is discussed in~\cref{app:krylov}.
Observe that~\cref{alg:second-order} can be run in a simplified (``naive'') regime, wherein the descent step is performed using~$\gx f(\cdot,y_t)$ rather than~$\gx\fapx_2(\cdot,y_t)$, so there is no need of higher-order oracles used otherwise (cf.~\cref{line:grad-2-fapx-grad} of~\cref{alg:second-order}).
\begin{center}
\begin{algorithm}%
\caption{FOSP search based on~$\fapx_2(x,\cdot)$}
\label{alg:second-order}
\begin{algorithmic}[1]
\Require{$x_0 \in X$;~$\by \in Y$;~$\gamx > 0$;~$T \in \N$;~$\delta > 0$; $\Naive \in \{0,1\}$}
\For{$t \in \{0, 1, ..., T-1\}$} 
\State $y_t = \ApproxMax(\fapx_2(x_t,\cdot),Y,\delta)$ 
\Comment{{\em implemented in~\cref{alg:carmon} (cf.~\cref{app:krylov})}}
\label{line:max-2}
\If{$\Naive$}
\State $x_{t+1} = \proj_{X} \left[x_{t} - \gamx \gx f(x_{t},y_t) \right]$
\label{line:grad-2-naive}
\Else
\State $x_{t+1} = \proj_{X} [x_{t} - \gamx \gx \fapx(x_{t},y_t) ]$, 
\label{line:grad-2-approx}
\State where~$\gx \fapx(x_{t},y_t) = \gx f(x_{t},\by) + \hxy f(x_{t},\by) (y_t - \by) + \nabla^3_{\x\y\y} f(x_{t},\by) [\cdot, y_t - \by,  y_t - \by]$
\label{line:grad-2-fapx-grad}
\EndIf
\EndFor 
\Ensure{$x_s$, where~$s \in \{0,..., T-1\}$ is sampled uniformly at random}
\end{algorithmic}
\end{algorithm}
\end{center}

\vspace{-0.2cm}
We now present a convergence guarantee for~\cref{alg:second-order}.
\begin{proposition}
\label{prop:subgradient-scheme}
Grant~\cref{ass:gradx,ass:tensy,ass:measurable} for~$k = 2$, let~$\Lxxp_2 = \Lxx + \tau_2\Dy^2$ (cf.~\cref{eq:gx-lip}), and assume that %
\begin{equation}
\label{eq:second-order-diameter}
24 \min\left\{ \Lxy \Dy + \sigma_2 \Dy^2,  \; \sqrt{\tfrac{1}{300}\Lxxp_2 \rho_2 \Dy^{3}}  \right\} \le \veps.
\end{equation}
Furthermore, assume that~$f(\cdot,y)$ is~$\sigma_0$-Lipschitz for any~$y \in Y$.
Finally, for~$\delta > 0$ and any~$x \in X$, let~$\ApproxMax(\gy\fapx_2(x,\cdot),Y,\delta)$ output~$y_{\delta} = y_{\delta}(x) \in Y$ such that
$\fapx_2(x,y_{\delta}) \ge \max_{y \in Y} \fapx_2(x,y) - \delta.$
Then:
\begin{enumerate}
\item~\cref{alg:second-order} run with~$\Naive = 0$ and the choice of parameters (for fixed~$\prob \in (0,1)$)
\begin{align}
\label{eq:quad-algo-params}
\gamx &= \frac{1}{\sigma_0 + \sigma_2 \Dy^2} \sqrt{\frac{\Gap + \rho_2 \Dy^3}{\Lxxp_2 T}}, \quad
\delta = \frac{4\prob}{10^{4}} \cdot  \frac{\veps^2}{\Lxxp_2}, \quad 
T \ge \frac{6 \cdot 10^6}{\prob^2} \cdot \frac{\Lxxp_2 (\Gap + \rho_2\Dy^3)  (\sigma_0 + \sigma_2 \Dy^2)^2}{\veps^4},
\end{align}
for~$\Gap$ defined in~\cref{def:prim-gap}, with probability at least~$1-\prob$ outputs~$x_s \in X$ such that~$\|\nabla\vphi_{2\Lxxp_2}(x_s)\|\le \veps$. 
\item
Moreover, the output of~\cref{alg:second-order} run with~$\Naive = 1$ has the same property if~$24 \sigma_2 \Dy^2 \le \veps \sqrt{\prob}$.
\end{enumerate}
\end{proposition}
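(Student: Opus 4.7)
The proof follows the Davis--Drusvyatskiy framework for model-based proximal subgradient methods on weakly convex functions, adapted to the setting where $\vphiapx$ is accessed only through an approximate maximization oracle. By~\cref{lem:gx-lip} with $k=2$, each $\fapx_2(\cdot,y)$ and hence $\vphiapx$ is $\Lxxp_2$-weakly convex on $X$, so the Moreau envelope $\vphiapx_{2\Lxxp_2}$ is $C^1$ with $\nabla\vphiapx_{2\Lxxp_2}(x) = 2\Lxxp_2(x - x^+)$ for $x^+ := \argmin_{u \in X}\{\vphiapx(u) + \Lxxp_2\|u-x\|^2\}$. The plan is to treat $\vphiapx_{2\Lxxp_2}(\cdot)$ as a Lyapunov function along the iterates and afterwards transfer the guarantee to $\vphi_{2\Lxxp_2}(\cdot)$ via~\cref{th:upper-bound}.

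\textbf{Approximate subgradient inequality and one-step descent.} In the non-Naive regime I take $g_t := \gx \fapx_2(x_t, y_t)$ and combine weak convexity of $\fapx_2(\cdot, y_t)$ with $\vphiapx \ge \fapx_2(\cdot, y_t)$ and $\fapx_2(x_t, y_t) \ge \vphiapx(x_t) - \delta$ to obtain the approximate subgradient bound
\[
\vphiapx(x') \;\ge\; \vphiapx(x_t) + \langle g_t, x' - x_t\rangle - \tfrac{\Lxxp_2}{2}\|x' - x_t\|^2 - \delta
\quad \forall \, x' \in X.
\]
Specializing to $x' = x_t^+$, combining with $\|x_{t+1} - x_t^+\|^2 \le \|x_t - x_t^+\|^2 - 2\gamx\langle g_t, x_t - x_t^+\rangle + \gamx^2 \|g_t\|^2$ (projection non-expansiveness) and with $\vphiapx(x_t) - \vphiapx(x_t^+) \ge \Lxxp_2 \|x_t - x_t^+\|^2$ (definition of $x_t^+$), one obtains the one-step inequality
\[
\vphiapx_{2\Lxxp_2}(x_{t+1}) \;\le\; \vphiapx_{2\Lxxp_2}(x_t) - \tfrac{\gamx}{4}\|\nabla \vphiapx_{2\Lxxp_2}(x_t)\|^2 + 2\Lxxp_2 \gamx \delta + \Lxxp_2 \gamx^2 G^2,
\]
where $G := \sigma_0 + \sigma_2 \Dy^2$ bounds $\|g_t\|$: the split $g_t = \gx f(x_t, y_t) + (g_t - \gx f(x_t, y_t))$, with $\|\gx f\| \le \sigma_0$ and~\cref{lem:gx-err} applied to the correction term at $k = 2$, provides this estimate.

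\textbf{Telescoping, random-index Markov, and transfer.} Telescoping the descent inequality over $t = 0, \ldots, T-1$, using~\cref{lem:fval-err} at $k = 2$ to bound $\vphiapx_{2\Lxxp_2}(x_0) - \vphiapx_{2\Lxxp_2}(x_T) \le \Gap + \rho_2 \Dy^3/3$, and dividing by $T$ yields
\[
\mathbb{E}_s\!\left[\|\nabla \vphiapx_{2\Lxxp_2}(x_s)\|^2\right]
\;\lsim\; \frac{\Lxxp_2(\Gap + \rho_2 \Dy^3)}{\gamx T} + \Lxxp_2 \delta + \Lxxp_2 \gamx G^2
\]
for $s$ uniform on $\{0, \ldots, T-1\}$. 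Balancing $\gamx$ as prescribed in~\cref{eq:quad-algo-params} makes the first and last terms comparable; choosing $\delta$ and $T$ as prescribed then drives the right-hand side below $p\veps^2/36$. Markov's inequality gives $\|\nabla \vphiapx_{2\Lxxp_2}(x_s)\| \le \veps/6$ with probability $\ge 1 - p$ (this is what forces the $1/p^2$ factor in~$T$). Since~\cref{eq:second-order-diameter} coincides with~\cref{eq:upper-bound-k} at $k = 2$, invoking~\cref{th:upper-bound} promotes this into $\|\nabla \vphi_{2\Lxxp_2}(x_s)\| \le \veps$.

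\textbf{Naive case and main obstacle.} In the Naive regime, the step direction $\gx f(x_t, y_t)$ differs from $\gx \fapx_2(x_t, y_t)$ by at most $\sigma_2 \Dy^2$ (\cref{lem:gx-err} with $k = 2$). Repeating the derivation above with this substitution, the approximate subgradient inequality acquires an extra additive term $-\sigma_2 \Dy^2 \|x' - x_t\|$; after Young's inequality decouples it from the quadratic $\|x_t - x_t^+\|^2$ contribution that drives the descent, this manifests as an additional $O(\sigma_2^2 \Dy^4)$ bias in $\mathbb{E}_s[\|\nabla \vphiapx_{2\Lxxp_2}(x_s)\|^2]$. Ensuring this bias stays below the Markov threshold $p\veps^2/36$ is precisely what forces the extra hypothesis $24 \sigma_2 \Dy^2 \le \veps \sqrt{p}$ in part~2---and this is the main additional technical point in the proof. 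The remaining work is bookkeeping of absolute constants, for which the generous prefactors in~\cref{eq:quad-algo-params} and in the bound on~$T$ leave ample slack.
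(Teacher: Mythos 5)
Your proposal is correct and follows essentially the same route as the paper's proof: the $\delta$-approximate max oracle yields the same approximate subgradient inequality, the one-step Moreau-envelope descent via the proximal point $x_t^+$ and projection non-expansiveness, the Lipschitz bound $\sigma_0+\sigma_2\Dy^2$ on the step direction via \cref{lem:gx-err}, telescoping plus Markov over the random index, and the transfer to $\vphi_{2\Lxxp_2}$ via \cref{th:upper-bound} all match, as does the Young's-inequality treatment of the $O(\sigma_2^2\Dy^4)$ bias in the $\Naive=1$ case that forces $24\sigma_2\Dy^2\le\veps\sqrt{\prob}$. No gaps.
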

We prove~\cref{prop:subgradient-scheme} in~\cref{sec:algos-proofs}.
The first claim is proved by following the footsteps of~\cite[Theorem~31]{jin2020local} up to minor modifications: first, parameters~$\sigma_0$ and~$\Gap$ have to be adjusted for the use of~$\vphiapx$ instead of~$\vphi$; second, the bound in expectation is replaced with a fixed-probability one. 
The argument proceeds by establishing~$O(\veps^{-4})$ complexity in terms of the surrogate~$\| \vphiapx_{2\Lxxp_2}(x_s)\|$, and then applying~\cref{th:upper-bound} under the high-probability event, which allows to control~$\| \vphi_{2\Lxxp_2}(x_s)\|$.

The second claim, pertaining to a simplified (``naive'') variant of the algorithm, where descent is performed in the direction of~$\gx f(\cdot,y_t)$ rather than that of~$\gx\fapx_2(\cdot,y_t)$, is proved by controlling the resulting perturbation of~$\|\vphiapx_{2\Lxxp_2}(x_t)\|$. This perturbation turns out to be~$O(p^{-1/2}\sigma_2 \Dy^2)$, so requiring that~$\sigma_2 \Dy^2 \lsim \veps\sqrt{p}$ %
suffices for the ``naive'' approach to work. 
Note that under~\cref{eq:second-order-diameter}, this requirement is very weak: it is either satisfied right away if the minimum in~\cref{eq:second-order-diameter} is attained on the first argument, or follows from~\cref{eq:second-order-diameter} when~$\Dy$ is smaller than an $\veps$-independent threshold---namely, when~$\sigma_2^2 \Dy \lsim {\Lxxp_2\rho_2}$.
Moreover, in the latter case---which is of main interest, as otherwise there is no advantage in using~\cref{alg:second-order} over~\cref{alg:zeroth-order,alg:first-order} anyway---the number of iterations as per~\cref{eq:quad-algo-params} becomes
\[
\frac{1}{\prob} \, O\left(\frac{\Lxxp_2 \Gap \sigma_0^2}{\veps^4} + \frac{\Lxxp_2 \Gap + \sigma_0^2}{\veps^2} + 1\right).
\]

\paragraph{Implementation of the max-oracle.}
Next we show how to implement~$\ApproxMax(\fapx_2(x,\cdot),Y,\delta)$ in the case where~$Y$ is a Euclidean ball. 
To this end, for~$g \in \R^d$ and a symmetric~$H \in \R^{d\times d}$, we let
\begin{equation}
\label{eq:quad-carmon}
\Psi_{H,g}(y) = \frac{1}{2} y^\top H y + g^\top y
\end{equation}
be the corresponding quadratic form, and  we aim at efficiently solving problems of the form
\begin{equation}
\label{opt:quadratic}
\max_{y \in B_d(\Dy)} \Psi_{H,g}(y)
\end{equation}
up to accuracy~$\delta > 0$ in objective value given access to~$g$ and the matrix-vector multiplication oracle~$y \mapsto Hy$.  
In order to accomplish this goal, we shall exploit the following result from~\cite{carmon2020first}.
\begin{proposition}[{\cite[Corollary~5.2]{carmon2020first}}]
\label{prop:carmon-perf}
Define
$\cK_{2m}(H, \{g,\xi\}) := \textup{span} \left( \{H^{j} g, H^{j} \xi  \}_{j \in \{0,...,m-1\}} \right)$, 
the joint Krylov subspace, where~$\xi \sim \Uniform(\mathds{S}^{d-1})$.
For any~$\Ry \ge 0$ and~$q \in (0,1)$, w.p.~$\ge 1-q$ one has
\[
\max_{\|y\| \le \Ry} \Psi_{H,g}(y)  
- \max_{y \, \in \, \cK_{2m}(H, \{g,\xi\}): \; \|y\| \le \Ry} \Psi_{H,g}(y)  
\le \frac{4\|H\|\Ry^2}{m^2} \left( 2 + \log^2\left( \frac{2\sqrt{d}}{q} \right) \right).
\]
\end{proposition}
\cref{prop:carmon-perf} immediately implies that whenever
$
m \ge \Dy \sqrt{\frac{\|H\|}{\delta} \left( 2 + \log^2\left(\frac{2\sqrt{d}}{q}\right) \right)},
$
the corresponding joint Krylov subspace~$\cK_{2m}(H,\{g,\xi\})$
with probability at least~$1-q$ contains a~$\delta$-suboptimal solution to~\cref{opt:quadratic} in terms of objective value. 
Now, since~$\fapx_2(x,y) = \Psi_{\hat H(x),\hat g(x)}(y)$ with
\begin{equation}
\label{eq:grad-and-hess}
\hat g(x) = \gy f(x,\by) \quad \text{and} \quad \hat H(x) = \hyy f(x,\by),
\end{equation}
we conclude that, granted~\cref{ass:tensy} with~$k = 1$ (more precisely, finiteness of~$\rho_1$, cf.~\cref{eq:tensy-lip}), any optimal solution to the problem 
\begin{align}
\label{eq:krylov-at-x}
&\max_{y \, \in \, \cK_{2\mbar}(\hat H(x), \{\hat g(x),\xi\}): \; \|y\| \le \half \Dy} \Psi_{H(x),g(x)}(y) 
&\text{with} \quad
\mbar = \left\lceil \min \left\{ \Dy \sqrt{\frac{\rho_1}{\delta} \left(2 + \log^2\left(\frac{2\sqrt{d}}{q}\right)  \right)}, \,  \frac{d}{2} \right\} \right \rceil 
\end{align}
implements the query~$\ApproxMax(\fapx_2(x,\cdot),B_{d}(\Dy),\delta)$ with probability at least~$1-q$. 
On the other hand, as discussed in~\cite{carmon2020first}, the computational burden of solving~\cref{eq:krylov-at-x} to machine precision is dominated by~$O(\mbar)$ calls of the oracle~$(x,y) \mapsto [\hat g(x), \hat H(x) y]$, inner products, and element-wise vector operations on~$\E_{\y}$ (typically~$y \mapsto \hat H(x) y$ is the most expensive of these operations).\footnote{Such an implementation is discussed in~\cite[Appendix A]{carmon2020first}, but somewhat informally, and no pseudocode of an algorithm is given. For this reason, in~\cref{app:krylov} we provide a formal algorithm (following the footsteps of~\cite{carmon2020first}) and analyze its complexity. Note that this can also be useful in the broader context of nonconvex quadratic optimization.}
Now, by recalling~\cref{prop:subgradient-scheme} and plugging in the value of~$\delta$ from~\cref{eq:quad-algo-params}, we arrive at the following result. %
\begin{theorem}
\label{th:algo-2}
Grant the premise of~\cref{prop:subgradient-scheme}.
Also, assume that~$Y = B_d(\Dy)$, and~$\gy f(x,\cdot)$ is~$\rho_1$-Lipschitz for all~$x$, i.e.,~$\|\gy f(x,y') - \gy f(x,y) \| \le \rho_1 \|y' - y\|$ \odima{for all}~$ x \in X$ and~$y,y' \in Y$. 
Choosing~$p \in (0,1)$ and~$q = (0,1-p)$, run~\cref{alg:second-order} with~$\Naive = 0$, parameters~$\gamx, \delta, T$ \odima{set as per}~\cref{eq:quad-algo-params}, and the oracle~$x \mapsto \ApproxMax(\fapx_2(x,\cdot),Y,\delta)$ implemented by running~\cref{alg:carmon} with~$g = \hat g(x)$,~$H = \hat H(x)$ (cf.~\cref{eq:grad-and-hess}),~$\Ry = \half\Dy$, and~$m = \lceil \min\{ M, d/2 \} \rceil$ with
\[
M = \frac{50 \, \Dy}{\veps} \sqrt{ \left( 2 + \log^2\left(\frac{2T\sqrt{d}}{q}\right)  \right) \frac{\rho_1 \Lxxp_2}{p}} 
\quad\quad
\left[ 
\stackrel{\cref{eq:second-order-diameter}}{\le} 
\;\;
\frac{80}{\min\{(\Lxxp_2 \rho_2 \veps )^{1/3}, 24\mu \}}\sqrt{\frac{\rho_1 \Lxxp_2}{p} \left( 1 + \log^2\left(\frac{T\sqrt{d}}{q}\right)  \right)}
\right].
\]

Then
the resulting point~$x_s \in X$ satisfies~$\|\nabla\vphi_{2\Lxxp_2}(x_s)\|\le \veps$ with probability at least~$1-(p+q)$, and is constructed by performing~$O(T)$ calls of the oracle
\begin{equation}
\label{quad-x-oracle}
\begin{aligned}
(x,y) 
\mapsto 
&\gx \fapx_2(x,y) 
\;\; \left[= \gx f(x,\by) + \hxy f(x,\by) (y - \by) + \nabla^3_{\x\y\y} f(x,\by) [\cdot, y - \by,  y - \by] \right]
\end{aligned}
\end{equation}
and projections onto~$X$, and~$O(MT)$ calls of the oracle
$(x,y) \mapsto ( \gy f(x,\by), \hyy f(x,\by) (y - \by) )$, inner products on~$E_{\y}$, and elementwise vector operations on~$E_{\y}$.

Moreover, if we in addition assume that~$24 \sigma_2 \Dy^2 \le \veps \sqrt{\prob}$ (cf. the second claim of~\cref{prop:subgradient-scheme}), then running~\cref{alg:second-order} with~$\Naive = 1$ produces~$x_s \in X$ with the same property while using the oracle~$(x,y) \mapsto \gx f(x,y)$ instead of~\cref{quad-x-oracle}. 
\end{theorem}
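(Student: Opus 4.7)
The plan is to combine Proposition \ref{prop:subgradient-scheme} with Proposition \ref{prop:carmon-perf} via a union bound over the $T$ outer iterations. Proposition \ref{prop:subgradient-scheme} already delivers the conclusion $\|\nabla\vphi_{2\Lxxp_2}(x_s)\| \le \veps$ with probability $\ge 1-p$ provided every call of $\ApproxMax(\fapx_2(x_t,\cdot),Y,\delta)$ genuinely returns a $\delta$-suboptimal maximizer; it only remains to verify that the Krylov-based implementation of this oracle meets its specification with sufficiently high probability, and to convert the required Krylov dimension into the stated oracle complexity.

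First, I would recall from the discussion preceding \cref{eq:krylov-at-x} that $\fapx_2(x,y) = \Psi_{\hat H(x),\hat g(x)}(y)$ with $\hat g(x) = \gy f(x,\by)$ and $\hat H(x) = \hyy f(x,\by)$, so the $\rho_1$-Lipschitz assumption on $\gy f(x,\cdot)$ yields $\|\hat H(x)\| \le \rho_1$. Applying Proposition \ref{prop:carmon-perf} to $\Psi_{\hat H(x),\hat g(x)}$ on $B_d(\Dy)$, and using the elementary fact that any exact maximizer of $\Psi_{\hat H(x),\hat g(x)}$ over $\cK_{2m}(\hat H(x),\{\hat g(x),\xi\}) \cap B_d(\Dy)$ implements the query $\ApproxMax(\fapx_2(x,\cdot),B_d(\Dy),\delta)$ with probability $\ge 1-q'$ provided $m \ge \Dy \sqrt{(\rho_1/\delta)(2+\log^2(2\sqrt{d}/q'))}$. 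Plugging in $\delta = (4p/10^4)(\veps^2/\Lxxp_2)$ from \cref{eq:quad-algo-params} and setting the per-call failure probability to $q' = q/T$, the resulting lower bound on $m$ is exactly the quantity $M$ defined in the theorem statement.

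Next, a union bound over the $T$ outer iterations shows that with probability at least $1-q$ every one of the $T$ invocations of the max-oracle succeeds; on this event Proposition \ref{prop:subgradient-scheme} applies verbatim, yielding $\|\nabla\vphi_{2\Lxxp_2}(x_s)\|\le\veps$ on a further event of probability at least $1-p$ (the latter event is the high-probability event entering the proof of Proposition \ref{prop:subgradient-scheme}, which in turn invokes \cref{th:upper-bound} under condition \cref{eq:second-order-diameter}). Intersecting the two events gives total failure probability at most $p+q$. The bracketed upper bound on $M$ is then read off directly from \cref{eq:second-order-diameter}: bounding $\Dy/\veps$ by $24/\mu$ when the first term of that minimum is active, and by a constant multiple of $(\Lxxp_2\rho_2\veps)^{-1/3}$ when the second term is active, and substituting into the formula for $M$. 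The oracle counts follow by inspection of \cref{alg:second-order}: each outer iteration performs one call of the $\gx\fapx_2$ oracle \cref{quad-x-oracle} and one projection onto $X$, while the implementation of $\ApproxMax$ via \cref{alg:carmon} costs $O(M)$ matrix-vector products $y \mapsto \hat H(x_t) y$ together with the associated inner products and elementwise operations in $E_\y$.

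For the ``naive'' variant ($\Naive = 1$), the same argument applies verbatim except that Proposition \ref{prop:subgradient-scheme} is invoked through its second claim; this requires the auxiliary assumption $24\sigma_2\Dy^2 \le \veps\sqrt{p}$ stated in the theorem, and replaces $\gx\fapx_2$ by $\gx f$ in the descent step of \cref{line:grad-2-naive}, leaving the max-oracle complexity unaffected. The one point of mild care in the whole argument is the accounting for per-call failure probability of the Krylov routine: naively taking failure probability $q$ per call would introduce a spurious factor of $T$ in the overall failure probability, so one must allocate $q/T$ per call, which is precisely why the logarithmic factor inside $M$ carries $T\sqrt{d}/q$ rather than $\sqrt{d}/q$.
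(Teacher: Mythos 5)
Your proposal is correct and follows essentially the same route as the paper: Theorem~\ref{th:algo-2} is obtained by combining \cref{prop:subgradient-scheme} with the Krylov-subspace guarantee of \cref{prop:carmon-perf} (using $\|\hat H(x)\|\le\rho_1$), allocating failure probability $q/T$ per max-oracle call so that a union bound over the $T$ outer iterations accounts for the $T$ inside the logarithm in $M$, and reading the oracle counts off \cref{alg:second-order,alg:carmon}. Your computation recovering $M$ from $\delta$ in \cref{eq:quad-algo-params} and your handling of the intersection of the two failure events match the intended argument.
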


Comparing this result with~\cref{th:algo-0,th:algo-1}  we see that for~\cref{alg:second-order}\odima{,} the \odima{admissible} range of~$\Dy$ improves from~$O(\veps)$ to~$O(\veps^{2/3})$, but this happens at the price of a significantly deteriorated complexity -- from~$O(\veps^{-2})$ to~$O(\veps^{-{13}{3}})$. We leave open the questions of whether the latter complexity estimate can be improved, and whether one can smoothly interpolate between the two complexities.

\appendix

\section{Deferred proofs for Section~\ref{sec:upper}}
\label{app:aux}

\subsection{Proof of~\cref{lem:fval-err}}
Take arbitrary~$\by, y \in Y$ and~$x \in X$. 
Let~$y_t = (1-t)\by + t y$ for~$t \in [0,1]$, and define~$\psi_{x,y}: [0,1] \to \R$ by~$\phi_{x,y}(t) := f(x,y_t)$.
Clearly,~$f(x,y) = \phi_{x,y}(1)$. 
Moreover, \odima{for~$\phi_{x,y}(\cdot)$ the first~$k$ derivatives read}
\begin{equation}
\label{eq:psi}
\phi_{x,y}^{(j)}(t) = \Ty{j} f(x,y_t)[(y-\by)^j], \quad \odima{1} \le j \le k; 
\end{equation}
whence
$
\fapx_k(x,y) = \sum_{j = 0}^k \frac{1}{j!}\phi_{x,y}^{(j)}(0),
$
cf.~\cref{def:fk}.
\odima{Now: by~\cref{eq:tensy-lip} with~$x' = x$,} it holds that~$\phi_{x,y}^{(k)}$ is absolutely continuous on~$[0,1]$, 
hence its derivative exists almost everywhere on~$[0,1]$ and is given by
\begin{equation}
\label{eq:psikplus1}
\phi_{x,y}^{(k+1)}(t) = \Ty{k+1} f(x,y_t)[(y-\by)^{k+1}].
\end{equation}
Expressing the Taylor expansion remainder~$\phi_{x,y}(1) - \sum_{j = 0}^k \frac{1}{j!}\phi_{x,y}^{(j)}(0)$ in the integral form\odima{,} we get
\begin{equation}
\label{eq:taylor-rem}
f(x,y) - \fapx_k(x,y)
=
\int_{0}^1 \frac{(1-t)^k}{k!} \, \phi_{x,y}^{(k+1)}(t) \, dt 
=
\int_{0}^1 \frac{(1-t)^k}{k!} \, \Ty{k+1} f(x,y_t)[(y-\by)^{k+1}] \, dt.
\end{equation}
Whence we arrive at
\[
\begin{aligned}
|f(x,y) - \fapx_k(x,y)|
&\le  
\| y-\by\|^{k+1} \int_{0}^1 \frac{(1-t)^k}{k!} \, \|\Ty{k+1} f(x,y_t) \| \, dt  
\le 
\cLyky \Dy^{k+1}  \int_{0}^1 \frac{(1-t)^k}{k!} \, dt
= 
\frac{\cLyky \Dy^{k+1}}{(k+1)!}.\qed
\end{aligned}
\]

\subsection{Proof of~\cref{lem:gx-err}}
For~$k = 0$ the result is obvious: we have~$\gx \fapx_0(x,y) =  \gx f(x,\by)$, so~$\| \gx f(x,y) - \gx \fapx(x,\by) \|$ can be bounded via~\cref{ass:gradx} or via triangle inequality and Lipschitzness of~$f(\cdot,y)$. 
For~$k \ge 1$, fix~$\by$ and arbitrary~$y \in Y$ and~$x \in X$, and let~$y_t = (1-t)\by + t y$ for~$t \in [0,1]$. 
As in the proof of~\cref{lem:fval-err} we define~$\phi_{x,y}: [0,1] \to \R$ as~$\phi_{x,y}(t) := f(x,y_t)$, and observe that~\eqref{eq:psi}--\eqref{eq:taylor-rem} are still valid.
(Indeed, imposing~\cref{eq:tensy-lip} with~$\cLykx = \infty$ suffices for~$\phi_{x,y}^{(k)}$ to be absolutely continuous on~$[0,1]$, and hence for its derivative to exists almost everywhere on~$[0,1]$ and to be given by~\cref{eq:psikplus1}.) 
As a result, we have that
\begin{align}
f(x,y) - \fapx_k(x,y)
&=
\int_{0}^1 \frac{(1-t)^k}{k!} \, \phi_{x,y}^{(k+1)}(t) \, dt 
= 
-\frac{\phi_{x,y}^{(k)}(0)}{k!}  + \int_{0}^1 \frac{(1-t)^{k-1}}{(k-1)!} \, \phi_{x,y}^{(k)}(t) \, dt \nn
&= 
\int_{0}^1 \frac{(1-t)^{k-1}}{(k-1)!} \Big( \Ty{k} f(x,y_t) - \Ty{k} f(x,\by)\Big) [(y-\by)^k] dt,
\label{eq:taylor-rem-int}
\end{align}
where we first \odima{integrated by parts and then used}~\cref{eq:psi}. 
Taking the partial gradient in~$x$ \odima{results in}
\[
\lang \gx f(x,y) - \gx \fapx_k(x,y), u \rang
= 
\int_{0}^1 \frac{(1-t)^{k-1}}{(k-1)!} \left( \Txy{k+1}{k} f(x,\by)[(y-\by)^k; u] - \Txy{k+1}{k} f(x,y_t)[(y-\by)^k; u] \right)  dt,
\]
where~$u \in E_\x$ is arbitrary\odima{; here} we used the abridged notation~$[(y-\by)^k; u] := [(y-\by), .., (y-\by); u]$ for tensor evaluation,
and the right-hand side is well-defined by the premise of the lemma.
Taking supremum over the unit ball in~$\XX$ and combining Jensen's inequality with~\cref{eq:tensy-xx}, we arrive at
\[
\begin{aligned}
\| \gx f(x,y) - \gx \fapx_k(x,y) \| 
&\le 
\|y-\by\|^k \int_{0}^1 \frac{(1-t)^{k-1}}{(k-1)!} \left(\left\| \Txy{k+1}{k} f(x,\by)\right\| + \left\|\Txy{k+1}{k} f(x,y_t) \right\| \right) dt 
\le \frac{2\cLykx \Dy^k}{k!}.\qed
\end{aligned}
\]

\subsection{Proof of~\cref{lem:gx-lip}}
\odima{\paragraph{Lipschitzness in~$x$.}
In the case~$k = 0$, the result is immediate. 
Let~$k \ge 1$.
It suffices to show that}
\begin{equation}
\label{eq:hxx-err}
\|\hxx \fapx_k(x,y) - \hxx f(x,y) \| \le \frac{2\cLykxx \Dy^k}{k!}
\end{equation}
for all~$y \in Y$ almost everywhere on~$X$. 
Indeed,~\cref{eq:gradx-lip} with~$\mu = \infty$ is equivalent to~$\| \hxx f(x,y) \| \le \Lxx$ holding for all~$y \in Y$ almost everywhere on~$X$, 
whence~\cref{eq:hxx-err} would imply\odima{,} by the triangle inequality\odima{,} that~$\|\hxx \fapx_k(x,y)\| \le \Lxxp_k$ for all~$y \in Y$ almost everywhere on~$X$, which is equivalent to~\cref{eq:gx-lip}.
Hence it only remains to verify~\cref{eq:hxx-err}. 
This can be done via~\cref{eq:taylor-rem-int} (which is valid by continuity of~$\Ty{k} f(x,\cdot)$):
\begin{equation}
\label{eq:f-err}
f(x,y) - \fapx_k(x,y)
= 
\int_{0}^1 \frac{(1-t)^{k-1}}{(k-1)!} \Big( \Ty{k} f(x,y_t) - \Ty{k} f(x,\by)\Big) [(y-\by)^k] dt.
\end{equation}
Now observe that by~\cref{eq:tensy-xx}, for any~$y \in Y$ tensor~$\Txxy{k+2}{k} f(x,y)$ exists and satisfies~$\|\Txxy{k+2}{k} f(x,y)\| \le \cLykxx$ almost everywhere on~$X$. 
Fix~$y \in Y$,~$\by \in Y$, and~$x \in X$, and assume w.l.o.g. that~$x \in X$ is such that~$\Txxy{k+2}{k} f(x,y_t)$ exists (and hence~$\|\Txxy{k+2}{k} f(x,y_t)\| \le \cLykxx$) for all~$y_t \in [\by,y]$. Then, for any~$u,v \in \XX$,
\begin{equation}
\label{eq:bilinear-integral}
\lang u, \left( \hxx f(x,y) - \hxx \fapx_k(x,y) \right) v \rang 
=  
\int_{0}^1 \frac{(1-t)^{k-1}}{(k-1)!} \Big( \Txxy{k+2}{k} f(x,y_t) - \Txxy{k+2}{k} f(x,\by)\Big) \, [(y-\by)^k; u,v] \, dt.
\end{equation}
Whence, taking supremum over~$u,v$ on the unit sphere, by Jensen's inequality and~\cref{eq:tensy-xx} we arrive at
\begin{equation}
\label{eq:bilinear-integral-bounded}
\begin{aligned}
\| \hxx f(x,y) - \hxx \fapx_k(x,y) \|
&\le 
\left\| y-\by  \right\|^k \int_{0}^1 \frac{(1-t)^{k-1}}{(k-1)!} \left( \Big\| \Txxy{k+2}{k} f(x,y_t) \Big\| + \Big\| \Txxy{k+2}{k} f(x,\by)  \Big\| \right) dt 
\le \frac{2\cLykxx \Dy^k}{k!}.
\end{aligned}
\end{equation}
Finally, observe that these estimates remain valid, for any~$y \in Y$ and almost all~$x \in X$, even if~$\Txxy{k+2}{k} f$ is not guaranteed to exist ever\odima{y}where on~$X \times Y$. 
Indeed, for any~$x \in X$ define~$Y'_x$ as the set of all~$y' \in Y$ where~$\Txxy{k+2}{k} f(x,y')$ does not exist. 
Consider the graph of the set-valued map~$x \mapsto Y'_x$, i.e.
\[
\Gamma := \{(x,y'): \; x \in X, \, y' \in Y'_x \} \subset X \times Y.
\]
By~\cref{ass:measurable},~$\Gamma$ is~$(m_{X} \times m_{Y})$-measurable (here~$m_{X},m_{Y}$ are the Lebesgue measures on~$X$,$Y$).
Hence its restriction~$\Gamma^*$ on~$X \times [\by, y]$ is measurable with respect to the induced measure~$m^* = m_{X \times [\by,y]}$, and we can apply Fubini's theorem:
\[
m^*(\Gamma^*) 
= \int_{x \in X}  m_{[\by,y]} (Y_x') \, dx 
= \int_{y' \in [\by, y]} m_{X} ( X_{y'}) \, dy', \;\;
\text{where} \;\;
X_{y'} := \{ x \in X: y' \in Y'_x \}. 
\]
By~\cref{eq:tensy-xx} we have~$m_{X} ( X_{y'}) = 0$ for any~$y' \in Y$, whence~$m^*(\Gamma^*) = 0$ by the second representation of~$m^*(\Gamma^*)$, and therefore~$m_{[\by,y]} (Y_x') = 0$ for almost all~$x \in X$ (by the first representation of~$m^*(\Gamma^*)$). 
This shows that, for any choice of~$y,\by \in Y$, identity~\cref{eq:bilinear-integral} is valid for almost all~$x$ (the integrand exists almost everywhere on~$[0,1]$), and so the final estimate is preserved.
\qed

\vspace{-0.2cm}
\odima{
\paragraph{Lipschitzness in~$y$.}
We can assume w.l.o.g.~that~$k \ge 1$. Let us now show that for all~$(x,y) \in X \times Y$,
\begin{equation}
\label{eq:hxy-err}
\|\hxy \fapx_k(x,y) - \hxy f(x,y) \| \le \frac{2\cLykx \Dy^{k-1}}{(k-1)!}.
\end{equation}
To this end,~$\forall y,w \in Y$ let
$
\psi_{x,y,w}(t) := \lang \gy f(x,y_t), w \rang.
$
Then~$\psi_{x,y,w}^{(j)}(t) = \Ty{j+1} f(x,y_t)[(y-\by)^{j}; w]$ for~$j \ge 1$, therefore~$\langle \gy \fapx_k(x,y), w \rangle = \sum_{j = 0}^{k-1} \frac{1}{j!}\psi_{x,y,w}^{(j)}(0)$ and
\[
\lang \gy f(x,y) - \gy \fapx_k(x,y), w \rang 
= \int_{0}^1 \frac{(1-t)^{k-1}}{(k-1)!} \, \psi_{x,y,w}^{(k)}(t) \, dt 
=  \int_{0}^1 \frac{(1-t)^{k-2}}{(k-2)!} \, \left( \psi_{x,y,w}^{(k-1)}(t) - \psi_{x,y,w}^{(k-1)}(0) \right)\, dt;
\]
here the first integral exists by Assumption~\ref{ass:gradx}, and for the second one we integrated by parts. 
Thus,
\[
\lang u, \left(\hxy f(x,y) - \hxy \fapx_k(x,y) \right) w \rang 
=
\int_{0}^1 \frac{(1-t)^{k-2}}{(k-2)!} \Big( \Txy{k+1}{k} f(x,y_t) - \Txy{k+1}{k} f(x,\by)\Big) \, [(y-\by)^{k-1}; u, w] \, dt.
\]
Whence, by proceeding as in~\cref{eq:bilinear-integral-bounded} but this time using~\eqref{eq:tensy-lip} rather than~\eqref{eq:tensy-xx}, we arrive at~\eqref{eq:hxy-err}.
\qed
}

\subsection{Justification of~\cref{eq:eps-for-lower-order}}
\label{app:eps-for-lower-order-proof}
We first consider the case~$k > 1$. 
Recall that we have to show that condition
$
\min\big\{\Lxy\Dy, \sqrt{\Lxx \rho_k \Dy^{k+1}} \big\} \lsim_k \veps,
$
for suitable constant factors that might depend on~$k$, implies \cref{eq:upper-bound-k} under~\cref{eq:eps-for-lower-order}---i.e., provided that
\[
\veps \lsim_k \min\left\{ \left(\frac{\mu^{k}}{\sigma_k}\right)^{\frac{1}{k-1}}, \left(\frac{\lam^{2k+1} \rho_k^{k}}{\tau_k^{k+1}} \right)^{\frac{1}{2k}}\right\}.
\]
It suffices to show that~\cref{eq:eps-for-lower-order} implies~$\cLykx \Dy^k \lsim_k \mu \Dy$ when~$\mu \Dy \lsim \veps$, and~$\cLykxx \Dy^{k} \lsim_k \lam$ when~$\sqrt{\Lxx \rho_k \Dy^{k+1}}\lsim_k \veps$.
The first of these implications follows from the first part of~\cref{eq:eps-for-lower-order}: indeed, under its premise we have
\[
\mu \Dy \lsim \veps \lsim_k \left(\frac{\mu^{k}}{\sigma_k}\right)^{\frac{1}{k-1}},
\]
whence~$\cLykx \Dy^k \lsim_k \mu \Dy$ follows by taking power~$k - 1 > 0$. 
For the second implication, the premise gives
\[
\Lxx \rho_k \Dy^{k+1} \lsim_k \veps^2 \lsim_k \left(\frac{\lam^{2k+1} \rho_k^{k}}{\tau_k^{k+1}} \right)^{\frac{1}{k}},
\]
whence~$\Dy^{k+1} \lsim_k \left({\lam}/{\tau_k} \right)^{\frac{k+1}{k}},$ that is~$\cLykxx \Dy^{k} \lsim_k \lam$ by taking power~$\frac{k}{k+1} > 0$. 
Both implications are proved. 

Finally, in the case~$k = 1$ the first implication holds trivially, as~$\sigma_1 = \mu$~w.l.o.g.
On the other hand, our previous argument for the second implication applies here as well (since~$\frac{k}{k+1} > 0$).
\qed

\section{Proofs for Section~\ref{sec:lower}}
\label{sec:lower-proofs}

\subsection{Proof of~\cref{lem:hard-properties}}
\proofstep{1}.
The first claim is obvious for~$F_{k,s,\lam,\mu,\rho}$. For~$S_{\lam,\rho,\Dy}$, as~$0 \le \tanh'(x) \le 1$ and~$-1 \le \tanh''(x) \le 1$, %
\[
\begin{aligned}
\frac{\partial^2}{\partial x^2} S_{\lam,\rho,\Dy}(x,y)           \; &= \frac{\lam}{2} \, \left( -1 + \frac{y}{\Dy} \tanh''\middle(\sqrt{\frac{\Lxx}{\rho \Dy}} \, x\middle) \right) \in [-\lam, 0],\\
\frac{\partial^2}{\partial x \partial y} S_{\lam,\rho,\Dy}(x,y)  \; &= \frac{1}{2} \sqrt{\frac{\Lxx \rho}{\Dy}} \tanh' \left(\sqrt{\frac{\Lxx}{\rho \Dy}} \, x\right) 
\in \bigg[0, \frac{1}{2} \sqrt{\frac{\Lxx \rho}{\Dy}} \bigg] \subseteq \left[0,\frac{\mu}{2\sqrt{2}}\right].
\end{aligned}
\]
\proofstep{2}.
For the second claim, first note that~$\frac{\partial}{\partial y} F_{0,s,\lam,\mu,0} = \mu x$ and~$\frac{\partial}{\partial x} F_{0,s,\lam,\mu,0} = \mu y - \lam x$.
So if~$\mu \le \sqrt{2\lam \rho / \Dy}$, then we have on~$[-r,r] \times [-\Dy/2, \Dy/2]$ that
$
\left|\frac{\partial}{\partial y} F_{0,0,\lam,\mu,0}\right| \le \frac{\mu^2 \Dy}{2\lam} \le \rho
$
and 
$
\left|\frac{\partial}{\partial x} F_{0,0,\lam,\mu,0} \right| \le \mu \Dy.
$
On the other hand, if~$\mu \ge \sqrt{2\lam \rho/\Dy}$, we have on~$[-r,r] \times [0,\Dy]$ that
$
\frac{\partial}{\partial y} S_{\lam,\rho,\Dy} = \frac{\rho}{2} \left( \tanh\left(\sqrt{\frac{\Lxx}{\rho \Dy}} \, x\right) - 1 \right)  \in [-\rho, 0];
$
meanwhile,
$
\frac{\partial}{\partial x} S_{\lam,\rho,\Dy} = \frac{y}{2} \sqrt{\frac{\lam\rho}{\Dy}} \tanh' \left(\sqrt{\frac{\Lxx}{\rho \Dy}} \, x\right)  - \frac{\lam x}{2}
\le \frac{1}{2}\sqrt{\lam\rho\Dy} + \frac{\mu \Dy}{4} < \mu \Dy,
$
where we used~$-1 \le \tanh(x) \le 1$ and~$0 \le \tanh'(x) \le 1$.
The second claim is thus verified. 
The third claim is straightforward. \qed

\subsection{Proof of~\cref{prop:lower-bound-0}}
\proofstep{1}. 
For the first claim, let~$f(x,y) = -\frac{1}{2}\lam x^2 + \mu xy$,~$X = [-r,r]$,~$Y = [\Ry,\Ry]$ where~$\Ry = \Dy/2$ and
$r := {\mu \Ry}/{\lam}.$
Clearly,
$
\vphi(x) = -\frac{1}{2}\lam x^2 + \mu \Ry |x| = \lam (-\tfrac{1}{2}x^2 + r |x|),
$
whence
$
\vphi_{2\lam}(x) = \lam\min_{u \in X} \{(u - x)^2 - \frac{1}{2} u^2 + r|u| \}.
$
Let~$x^+(x)$ be the (constrained) minimizer.
The unconstrained minimizer is given by~$[2x]_r$, where
\begin{equation}
\label{def:soft-thresholding}
[z]_r := (\max\{|z|,r\}-r)\sign(z)
\end{equation}
is the soft\odima{-}thresholding operator. %
Now, observe that whenever~$|x| \le r$, one has~$|[2x]_r| \le r$, and therefore~$x^+(x) = [2x]_{r}$. 
Whence by~\cref{eq:moreau-explicit} (cf.~also~\cref{prop:moreau-to-primal} in appendix) for any~$x \in X$ we have
\begin{equation}
\label{eq:moreau-grad-0}
\vphi_{2\lam}'(x) = 2\lam(x - [2x]_r).
\end{equation}
On the other hand, we have that~$\vphiapx(x) = f(x,\by)$ and thus
$
\vphiapx_{2\lam}(x) = \lam \min_{u \in X} \{(u-x)^2 -\frac{1}{2} u^2 + u{\mu \by}/{\lam} \}.
$
Here the unconstrained minimizer is given by~$2x-\mu\by/\lam$, hence~$\hat x^+(x) = 2x-\mu\by/\lam$ for the actual (constrained) minimizer as long as~$|2x-\mu\by/\lam| \le r$.
Now, let us choose~$x^* = {r}/{2}$ and~$\by = {\Ry}/{2}.$
Clearly,~$2x^*-\mu\by/\lam = r/2$, therefore
$
\vphiapx_{2\lam}'(x^*) = 2\lam(x^* - \hat x^+(x^*)) = 0.
$
Meanwhile, due to~\cref{eq:moreau-grad-0} we have
\[
\vphi_{2\lam}'(x^*) = 2\lam(x^* - [2x^*]_r) = 2\lam(\tfrac{1}{2}r - [r]_r) = \lam r = \mu \Ry.
\]

\proofstep{2}.
Now consider
$
f(x,y) = S_{\lam,\rho,\Dy}(x,y),
$ 
cf.~\cref{eq:hard-sigmoid}, on~$[-r,r] \times [0,\Dy]$. 
Since~$|\tanh(\cdot)| \le 1$ on~$\R$, we have~$\vphi(x) = f(x,0) = -\frac{1}{4} \lam x^2$ and~$\vphi_{2\lam}(x) = \lam \min_{-r \le u \le r} \left\{ (u-x)^2 - \tfrac{1}{4} u^2 \right\}$, with  unconstrained minimizer given by~$\tfrac{4}{3}x$. 
Hence, as long as~$|x| \le 3r/4$, we have
$
\vphi_{2\lam}(x) = -\lam x^2/3
$
and
$
\vphi_{2\lam}'(x) = -2\lam x/3.
$

On the other hand, for the choice~$\by = 2\Dy/3$ we have
\[
\vphiapx(x) = f(x,\by) = -\frac{\lam x^2}{4} + \frac{\rho \Dy}{3} \left( \tanh\left(\sqrt{\frac{\Lxx}{\rho \Dy}} \, x \right)-1 \right).
\]
Since~$\vphiapx(\cdot)$ is differentiable on~$X = [-r,r]$, the set of its points on~$X$ with vanishing derivative coincides with such set for~$\vphiapx_{2\lam}(\cdot)$. 
Let us now find~$x^* \in [-r,r]$ for which~$\vphiapx'(x^*) = 0$, i.e., solutions to
\[
\sqrt{\frac{\lam}{\rho \Dy}} x^* \cosh^2\left(\sqrt{\frac{\lam}{\rho \Dy}}x^*\right) = \frac{2}{3}.
\]
The unique solution is~$x^* = c\sqrt{\rho\Dy/\lam}$ with~$c \in (0.51, \; 0.52)$. 
Using that~$\mu \ge \sqrt{2\lam \rho/\Dy}$, we \odima{conclude that
$
x^* < 0.52 \mu \Dy/(\sqrt{2} \lam) < 3 \mu \Dy/(8\lam) = {3r}/{4},
$
whence finally~$\vphi_{2\lam}'(x^*) = -2\lam x^*/3 < -\sqrt{\lam \rho \Dy}/3$.} 
\qed

\subsection{Proof of~\cref{prop:lower-bound-1}}
\proofstep{1}. 
Assume that~$\mu \le \sqrt{\lam\rho/2}$.
Recall that
$
f(x,y) = -\frac{1}{2}\lam x^2 + \mu xy - \frac{1}{2}\rho y^2.
$
Hence for~$\by = 0$ we have~$\fapx(x,y) = -\frac{1}{2}\lam x^2 + \mu xy$ and~$\vphiapx(x) = \lam (-\frac{1}{2} x^2 + r |x|)$
where~$r = {\mu \Ry}/{\lam}$ with~$\Ry = \half\Dy$ \odima{(as in~\proofstep{1} of the previous proof).}
\odima{As such,}~$\vphiapx_{2\lam}'(x) = 2\lam([2x]_r-x)$, and~$x^* = r$ \odima{is} a stationary point for~$\vphiapx_{2\lam}$ (cf.~\cref{eq:moreau-grad-0}).

Meanwhile, the maximum in
$
\vphi(x) = -\frac{1}{2} \lam x^2 + \max_{|y| \le \Ry} \{ \mu xy - \frac{1}{2} \rho y^2 \}
$
is effectively unconstrained---attained at~$\mu x/\rho$---whenever~$|x| \le  \rho \Ry/\mu$; 
thus, for such~$x$ we have
$
\vphi(x) = 
\frac{1}{2}\left({\mu^2}/{\rho} -\lam  \right) x^2
$
and hence
\[
\vphi'(x) = ({\mu^2}/{\rho} - \lam)x.
\]
Moreover, by the first-order optimality condition~$2\lam(x - x^+(x)) \in \partial\vphi'(x^+(x))$, cf.~\cref{eq:moreau}, we express the proximal mapping~$x^+(x) = x^+_{{\vphi}/{2\lam}}(x)$ as the solution to~$2\lam(x^+(x) - x) + \left({\mu^2}/{\rho} - \lam \right) x^+(x) = 0,$ that is
\[
x^+(x) = \frac{2\lam\rho  x}{\lam\rho + \mu^2}, 
\quad \text{whenever} \quad
\frac{2\lam\rho  |x|}{\lam\rho + \mu^2} \le \frac{\rho \Ry}{\mu}.
\]
In particular, the above expression is valid for~$x^* = r$: indeed, recalling that~$\mu \le \sqrt{\lam\rho/2}$, we obtain
\[
\frac{2\lam \rho r}{\lam\rho + \mu^2}  < 2r = \frac{2\mu \Ry}{\lam} \le \frac{\rho \Ry}{\mu}.
\] 
To verify the first claim of the proposition, it remains to observe that
\[
-\vphi_{2\lam}'(r) 
= 2\lam(x^+(r) - r) 
= 2\lam r \left(\frac{2\lam \rho}{\lam\rho+\mu^2} - 1\right) 
= 2\lam r \, \frac{\lam\rho - \mu^2}{\lam\rho + \mu^2}
\ge \frac{2\lam r}{3} = \frac{2\mu \Ry}{3},
\]
where the inequality is due to~$\mu^2 \le \lam\rho/2$.

\proofstep{2}.
We shall now prove the second claim. 
Define~$\bar \rho = \rho/4$,~$\bar \mu := \sqrt{\lam \rho/2} = \sqrt{2\lam\bar\rho}$, and
$\bar r := {\bar\mu \Ry}/{\lam}$,
so that~$f(x,y) = F_{1,1,\lam,\bar\mu,\bar\rho} = -\frac{1}{2}\lam x^2 + \bar\mu xy + \frac{1}{2}\bar\rho y^2$.
As such,
$
\vphi(x) = \lam \left(-\tfrac{1}{2}x^2 + \bar r |x| \right) + \tfrac{1}{2} \bar\rho \Ry^2
$
and
$
\vphi_{2\lam}(x) = \lam\min_{u \in \R} \{(u - x)^2 - \frac{1}{2}u^2 + r|u| \} + \frac{1}{2} \bar\rho \Ry^2.
$
This implies the same result as in~\cref{eq:moreau-grad-0}, namely
\begin{equation}
\label{eq:moreau-grad-1}
\vphi_{2\lam}'(x) = 2\lam(x - [2x]_{\bar r}).
\end{equation}
On the other hand,~$\fapx \equiv \fapx_1$ at any~$\by \in Y$ is given by
$
\fapx(x,y) 
= -\frac{1}{2}\lam x^2  + (\bar \mu x + \bar\rho \by)y - \frac{1}{2} \bar\rho \by^2;
$
in particular, for~$\by = \Ry$ we have~$\fapx(x,y) = -\frac{1}{2}\lam x^2  + (\bar \mu x + \bar\rho \Ry)y - \frac{1}{2}\bar\rho \Ry^2$ and
$
\vphiapx(x) = -\frac{1}{2}\lam x^2  + \Ry |\bar \mu x + \bar\rho \Ry| - \frac{1}{2}\bar\rho \Ry^2,
$
thus
\begin{equation}
\label{eq:moreau-shifted}
\vphiapx_{2\lam}(x) = \lam\min_{u \in \R} \left\{(u - x)^2 - \frac{u^2}{2} + \bar r\left|u + \frac{\bar\rho \Ry}{\bar \mu}\right| \right\} - \frac{\bar\rho \Ry^2}{2}.
\end{equation}
By the optimality condition, the minimizer is given by
$\hat x^+(x) = \left[ 2x + {\bar\rho \Ry}/{\bar \mu} \right]_{\bar r} - {\bar\rho \Ry}/{\bar \mu},$
so we arrive at
\begin{equation}
\label{eq:moreau-shifted-grad}
\vphiapx_{2\lam}'(x) = 2\lam\left(x + \frac{\bar\rho \Ry  }{\bar \mu} - \left[ 2x + \frac{\bar\rho \Ry}{\bar \mu} \right]_{\bar r} \right).
\end{equation}
We conclude that~$x^* = -{\bar\rho \Ry}/{\bar \mu}$ is stationary for~$\vphiapx_{2\lam}(\cdot)$: indeed, plugging in~$\bar\mu = \sqrt{2\lam \bar\rho}$ we have
$
-\vphiapx_{2\lam}'(x^*) = 2\lam \left[-{\bar\rho \Ry}/{\bar \mu} \right]_{\bar r} = 2\lam \left[-{\bar r}/{2} \right]_{\bar r} = 0.
$
Meanwhile, due to~\cref{eq:moreau-grad-1} we conclude that
\[
-\vphi_{2\lam}'(x^*) 
= 2\lam \left( \frac{\bar\rho \Ry}{\bar \mu} + \left[-\frac{2\bar\rho \Ry}{\bar \mu} \right]_{\bar r} \right) 
= 2\lam \left( \frac{\bar\rho \Ry}{\bar \mu} + \left[-\bar r\right]_{\bar r} \right) 
= \frac{2\lam \bar\rho \Ry}{\bar \mu} 
= \sqrt{2\lam \bar\rho\Ry^2}  
= \sqrt{\frac{\lam \rho\Dy^2}{8}}.
\quad\qed
\]

\subsection{Proof of~\cref{prop:lower-bound-k}}

\subsubsection{Case~$\mu \le \mucrit$}
Here~$Y = [0,\Dy]$ and
$
f(x,y) = F_{k,-1,\lam,\mu,\rho}(x,y) = -\frac{\lam x^2}{2} + \bar\mu x y - \frac{\rho |y|^{k+1}}{(k+1)!},
$
with some~$\bar \mu \le \mu$ yet to be chosen.
Let~$\by = 0$; then~$\fapx(x,y) = -\frac{1}{2} \lam x^2 + \bar\mu x y$ is maximized on~$\{0,\Dy\}$, \odima{thus}
$
\vphiapx(x) = -\frac{\lam x^2}{2}  + \bar\mu \Dy \max\{x,0\}.
$
\odima{As such,} the point
$
x^* = \frac{\bar\mu \Dy}{\lambda}
$
is stationary for~$\vphiapx(\cdot)$, and thus for~$\vphiapx_{2\lam}(\cdot)$ as well.
It remains to lower-bound~$|\vphi_{2\lam}'(x^*)|$. 
To this end, note that
$
\frac{\partial}{\partial y}f(x,y) = \bar\mu x - \frac{1}{k!}{\rho |y|^{k}\sign(y)},
$
so~$f(x,\cdot)$ has a unique unconstrained maximizer~$\bar y = \bar y(x)$ which is given as the solution to
$
\bar\mu x - \frac{1}{k!} \rho |y|^{k}\sign(y)= 0;
$
in other words,
$
\bar y(x) = \left({|x| \bar\mu k!}/{\rho} \right)^{1/k} \sign(x).
$
Clearly, we have that~$\vphi(x) = f(x,\bar y(x))$ for any~$x\in \R$ such that~$\bar y(x) \in [0,\Dy]$---in other words, when
\begin{equation}
\label{eq:strong-coupling-k-to-check-0}
0 \le \bar \mu x \le \frac{\rho \Dy^k}{k!}.
\end{equation}
As a result, for such~$x$ function~$\vphi(x)$ is differentiable, and we have
\[
\vphi(x) = -\frac{\lam x^2}{2} + \frac{k}{k+1} \left(\frac{x \bar\mu k!}{\rho}\right)^{{1}/{k}} \mu x,
\quad
\vphi'(x) = -\lam x + \bar\mu \left(\frac{x \bar\mu k!}{\rho}  \right)^{1/k}.
\]
Now, recall (cf.~\cref{eq:moreau-explicit}) that we have
$
2\lam(x - x^+(x)) \in \partial \vphi(x^+(x))
$
for the proximal mapping~$x^+(x)$ corresponding to~$\vphi$ with stepsize~$\frac{1}{2\lam}$. 
Therefore, we can compute~$x^+(x)$ for given~$x \ge 0$ by solving
\begin{equation}
\label{eq:strong-coupling-x-plus-eq}
x^+ +  \frac{\bar\mu}{\lam} \left(  \frac{x^+ \bar\mu k!}{\rho} \right)^{1/k} = 2 x
\end{equation}
for~$x^+ \ge 0$ (such a solution is clearly unique) and verifying that the solution satisfies
$
\bar\mu x^+ \le \frac{1}{k!}\rho \Dy^k
$
(cf.~\cref{eq:strong-coupling-k-to-check-0}).
It is clear that, for any~$x \ge 0$, the corresponding solution~$x^+(x)$ to~\cref{eq:strong-coupling-x-plus-eq} satisfies the bounds
\begin{equation}
\label{eq:strong-coupling-k-bracketing}
2x -  \frac{\bar\mu}{\lam} \left(  \frac{2x \bar\mu k!}{\rho} \right)^{1/k} \le x^+(x) \le 2x.
\end{equation}
To this end, let~$x^+ = x^+(x^*)$ for~$x^* = \frac{\bar\mu \Dy}{\lam}$, and~$\bar \mu = \frac{1}{2}\mu \, [\le \frac{1}{2} \mucrit]$. 
By~\cref{eq:mu-crit-k} and the upper bound in~\cref{eq:strong-coupling-k-bracketing},
\[
\bar\mu x^+ \le \frac{2 \bar\mu^2 \Dy}{\lam} %
\le \frac{1}{2} \frac{\rho \Dy^k}{k!}, 
\]
which verifies our characterization of~$x^+$ as the solution to~\cref{eq:strong-coupling-x-plus-eq} for chosen~$x^*$.
On the other hand, %
\[
\frac{\bar\mu}{\lam} \left(  \frac{2x^* \bar\mu k!}{\rho} \right)^{1/k} 
=  \frac{\bar\mu \Dy}{\lam} \left(\frac{2\bar\mu^2 k!}{\lam \rho \Dy^{k-1}} \right)^{1/k} 
\le \frac{x^*}{2^{1/k}},
\]
\odima{whence by~\cref{eq:strong-coupling-k-bracketing} we get~$x^+ \ge (2-2^{-1/k})x^*$ and, by~\cref{eq:moreau-explicit},
$
-\vphi_{2\lam}'(x^*) = 2\lam (x^+ - x^*) \ge (1 - 2^{-1/k} ) \mu\Dy > \frac{\mu \Dy}{2k}.
$
Here in the final step we used~$k \ge 2$ and the fact that the function~$t \mapsto (1-\frac{1}{t})^t$ increases on~$[1,+\infty]$.}
\subsubsection{Case~$\mu \ge \mucrit$}
Denote~$\Ry = \Dy/2$ and let
$
\bar r := \frac{\bar\mu \Ry}{\lam}
$
for some~$\bar \mu \le \mucrit \,[\le \mu]$ yet to be chosen.
Recall that here~$Y = [-\Ry, \Ry]$, 
\[
f(x,y) = F_{k,1,\lam,\bar\mu,\rho}(x,y)  = -\frac{\lam x^2}{2} + \bar\mu xy + \frac{\rho |y|^{k+1}}{(k+1)!}.
\]
Clearly
$
\vphi(x) = \lam (-\tfrac{1}{2}x^2 + \bar r |x| ) + \tfrac{\rho \Ry^{k+1}}{(k+1)!},
$
therefore
$
\vphi_{2\lam}(x) = \lam\min_{u \in \R} \{(u - x)^2 - \frac{1}{2} u^2 + \bar r|u| \} + \frac{\rho \Ry^{k+1}}{(k+1)!} 
$
and
\begin{equation}
\label{eq:moreau-grad-k}
\vphi_{2\lam}'(x) = 2\lam(x - [2x]_{\bar r}),
\end{equation}
cf.~\cref{eq:moreau-grad-1}. 
Now, note that~$g(y) = \frac{1}{(k+1)!}|y|^{k+1}$ is~$k$ times continuously differentiable with~$j$-th derivative
\begin{equation}
\label{eq:g-derivatives}
g^{(j)}(y) 
= \frac{|y|^{k+1-j}\sign(y)^j}{(k+1-j)!}, \quad j \le k.
\end{equation}
Whence by the binomial formula we conclude that, for any~$\by \in [0,\Ry]$, 
\[
\begin{aligned}
\fapx(x,y) 
&= -\frac{\lam x^2}{2} + \bar\mu xy + \frac{\rho \left(y^{k+1}-(y-\by)^{k+1}\right)}{(k+1)!}.
\end{aligned}
\]
From now on, we consider two cases depending on the parity of~$k$ (the case of odd~$k$ being harder).

\paragraph{Case of even~$k$.}
In this case we choose~$\bar\mu = \mucrit$,~$\by = \Ry$, and observe that the resulting function~$\fapx(x,\cdot)$ is convex on~$[-\Ry,\Ry]$. 
Indeed, in terms of the rescaled variable~$z = y/\Ry$ we have that~$\fapx(x, y) = h(x,z)$ with
$
h(x,z) =  -\frac{1}{2}\lam x^2 + \bar\mu \Ry x z + \rho \Ry^{k+1} p(z)
$
and function~$p: [-1,1] \to \R$ given by
\begin{equation}
\label{eq:p-poly}
p(z) = \frac{z^{k+1}-(z-1)^{k+1}}{(k+1)!}.
\end{equation}
Let us verify that~$p$ is convex on~$[-1,1]$.
Indeed: on one hand, for~$z \in [-1,1]$ one has~$\frac{z}{z-1} \le 1/2$, thus~$( \frac{z}{z-1} )^{k-1} \le 1$
using that~$k-1$ is odd; on the other hand,~$(z-1)^{k-1} \le 0$ for~$z \in [-1,1]$. 
As a result,
\[
p''(z) = \frac{z^{k-1} - (z-1)^{k-1}}{(k-1)!} \ge 0, \quad \forall z \in [-1,1]. 
\]
As such,~$p(\cdot)$ is convex on~$[-1,1]$;~$h(x,\cdot)$ is convex on~$[-1,1]$ and maximized at an endpoint, so that
\[
\begin{aligned}
\vphiapx(x) 
= \max_{z = \pm 1} h(x,z) 
&= \lam \left( -\frac{x^2}{2} + \max \left\{ \bar r x + \frac{\rho \Ry^{k+1}}{\lam (k+1)!}, -\bar r x + \frac{(2^{k+1} - 1) \rho\Ry^{k+1}}{\lam (k+1)!}  \right\} \right) \\
&= \lam \left( -\frac{x^2}{2} + \bar r \left|  x - \frac{(2^{k} - 1) \rho \Ry^{k}}{\bar \mu (k+1)!}  \right| \right) + \frac{2^{k} \rho \Ry^{k+1}}{(k+1)!}
\end{aligned}
\]
and \odima{by~\eqref{eq:moreau-shifted}--\eqref{eq:moreau-shifted-grad},}
$
\vphiapx_{2\lam}'(x) = 2\lam\left( x - \frac{(2^{k} - 1) \rho \Ry^{k}}{\bar \mu (k+1)!} - \left[ 2x - \frac{(2^{k} - 1) \rho \Ry^{k}}{\bar \mu (k+1)!} \right]_{\bar r} \right).
$
Now: by~\cref{eq:mu-crit-k},
$
\odima{x^{\textsf{even}}_{k}} := \frac{ (2^{k} - 1) \rho \Ry^{k}}{\bar\mu (k+1)!} 
$
\odima{is such that}
$
\odima{x^{\textsf{even}}_k} \le \frac{\rho \Dy^k}{\bar\mu (k+1)!} \le \frac{2\bar r}{k+1}.
$
But then
$
[x^{\textsf{even}}_k]_{\bar r} = 0
$
and~$0 \le [2x^{\textsf{even}}_k]_{\bar r} \le [2x^{\textsf{even}}_k]_{\frac{3}{2}x^{\textsf{even}}_k} = \half x^{\textsf{even}}_k$. 
\odima{As a result,}~$\vphiapx_{2\lam}'(x^{\textsf{even}}_k) = 0$ and
$
\vphi_{2\lam}'(x^{\textsf{even}}_k) 
= 2\lam(x^{\textsf{even}}_k - [2x^{\textsf{even}}_k]_{\bar r}) 
\ge \lam x^{\textsf{even}}_k 
= \frac{(2^{k} - 1) \lam \rho \Ry^{k}}{\mucrit (k+1)!}
= \frac{(1-2^{-k}) \mucrit \Dy}{k+1} %
\ge \frac{\mucrit \Dy}{2k}.
$

\paragraph{Case of odd~$k$.}
Note that here we have~$k \ge 3$ by the premise of the theorem. 
We choose~$\by = (1-\frac{1}{k}) \Ry$, so that $\fapx(x,y) = h(x,z)$ with
$
h(x,z) =  -\frac{1}{2}\lam x^2 + \bar\mu \Ry x z + \rho \Ry^{k+1} q(z),
$
for~$z = y/\Ry$ and~$q(z)$ given by
\begin{equation}
\label{eq:q-poly}
q(z) = \frac{1}{(k+1)!} \left(z^{k+1}-\left(z-\left(1-\frac{1}{k}\right)\right)^{k+1} \right).
\end{equation}
We choose~$\bar \mu$ as follows:
\begin{equation}
\label{eq:mu-bar-odd-k}
\bar \mu = \sqrt{\frac{2\lam \rho \Dy^{k-1}}{k!} \left(1 - \frac{1}{2^{k}}\right) \left(1-\frac{1}{k}\right)^{k-1}}\odima{.}
\end{equation}
Since~$k \ge 3$, we have that~$\frac{1}{\sqrt{2}} \mucrit \le \bar \mu \le \mucrit$ (cf.~\cref{eq:mu-crit-k}).
Now, let us show that
$
\odima{x^{\textsf{odd}}_k} := -\left(1-\frac{1}{k}\right) \bar r
$ 
is a stationary point for~$\vphiapx$ (and thus also for~$\vphiapx_{2\lam}$). 
\odima{To this end, we first observe that}
\begin{equation}
\label{eq:h-derivative-k-odd}
\begin{aligned}
h'_z(x^{\textsf{odd}}_k, z) 
= \bar \mu \Ry x^{\textsf{odd}}_k + \rho \Ry^{k+1} q'(z) 
= -\left(1-\frac{1}{k}\right) \frac{\bar \mu^2 \Ry^2}{\lam}
	+ \frac{\rho \Ry^{k+1}}{k!}  \left(z^{k}-\left(z-\left(1-\frac{1}{k}\right)\right)^{k}\right). 
\end{aligned}
\end{equation}
\begin{itemize}
\item 
Let us now find all stationary points of~$h(x^{\textsf{odd}}_k, \cdot)$ on~$\R$. %
Plugging~\cref{eq:mu-bar-odd-k} into the right-hand side of~\cref{eq:h-derivative-k-odd} and dividing over~$\frac{1}{k!} \Ry^{k+1} \left(1-\frac{1}{k}\right)^k > 0$,
we arrive at the equation
\begin{equation}
\label{eq:odd-k-stationary-eq}
w^k - (w - 1)^k = 2^k-1
\end{equation}
in terms of~$w = \frac{k}{k-1} z$, and guess two solutions:~$w_1^* = -1$ and~$w_2^* = 2$. 
Moreover,~$w \mapsto w^k - (w - 1)^k$ is a strictly convex function (to see this, note that~$k-2 > 0$ is odd, and the function~$u \mapsto u^{k-2}$ increases on~$\R$ modulo the sole fixed point~$u = 0$), therefore~\eqref{eq:odd-k-stationary-eq} has no other (real) solutions.

\item 
Since~$k \ge 3$, we have~$1 < \frac{k}{k-1} \le \frac{3}{2}$, so only one of the two solutions, namely~$w_1^* = -1$, belongs to the range~$\left[-\frac{k}{k-1}, \frac{k}{k-1} \right]$ of~$w = w(z)$ corresponding to~$z \in [-1,1]$.
As such,
$
z^{\textsf{odd}}_k = -\left(1 -\frac{1}{k}\right)
$
is a unique stationary point of~$h(x^{\textsf{odd}}_k, \cdot)$ on~$[-1,1]$. 
Moreover, since~$k-1$ is even, we have that
\[
h^{''}_{zz}(x^{\textsf{odd}}_k, z^{\textsf{odd}}_k)
= \frac{\rho \Ry^{k+1}}{(k-1)!}  \left((z^{\textsf{odd}}_k)^{k-1}-\left(z^{\textsf{odd}}_k-\left(1-\frac{1}{k}\right)\right)^{k-1}\right)  
= (1-2^{k-1}) \left( 1 - \frac{1}{k} \right)^{k-1} \frac{\rho \Ry^{k+1}}{(k-1)!} < 0,
\]
so~$z^{\textsf{odd}}_k$ is a local maximizer---and hence also a unique global maximizer---of~$h(x^{\textsf{odd}}_k,\cdot)$ on~$[-1,1]$. %

\item
Finally, by a version of Danskin's theorem (see, e.g.,~\cref{lem:danskin} in appendix) we have that
\[
\vphiapx'(x^\textsf{odd}_k) 
= h'_x(x^\textsf{odd}_k, z^{\textsf{odd}}_k) 
= \bar\mu \Ry z^{\textsf{odd}}_k - \lam x^\textsf{odd}_k = \lam(\bar r z^{\textsf{odd}}_k - x^\textsf{odd}_k) 
= 0.
\]
\end{itemize}
We have just verified that~$x^\textsf{odd}_k$ is a stationary point for~$\vphiapx(\cdot)$. 
On the other hand, observe that
\[
\begin{aligned}
-\vphi_{2\lam}'(x^\textsf{odd}_k) 
= 2\lam([2x^\textsf{odd}_k]_{\bar r}-x^\textsf{odd}_k) 
&= 2\lam \left( \left(1-\frac{1}{k}\right) \bar r + \left[-2\left(1-\frac{1}{k}\right) \bar r \right]_{\bar r} \right) 
= \frac{2\lam \bar r}{k} 
= \frac{\bar \mu \Dy}{k}
> \frac{\mucrit \Dy}{2k}.
\end{aligned}
\qed
\]

\section{Proofs for Section~\ref{sec:algos}}
\label{sec:algos-proofs}

The proofs of~\cref{th:algo-0,th:algo-1} rely on the following technical result extracted from~\cite{ostrovskii2020efficient}.

\begin{lemma}[cf.~{\cite[Proposition~5.5]{ostrovskii2020efficient}}]
\label{lem:fne-to-moreau}
\begin{comment}
For convex set~$Z$ in Euclidean space~$E$, define functional~$\Sz$ by
\begin{equation}
\label{def:strong-measure}
\Sz^2(z,\zeta,\ell) := 2\ell\max_{z' \in Z} \big\{-\langle \zeta, z' - z \rangle - \tfrac{\ell}{2} \|z'-z\|^2 \big\} 
\quad \text{on~~$z \in Z,$\,~$\zeta \in E$,\,~$\ell > 0$}. 
\end{equation}
For any~$(x,y) \in X \times Y$, let
$
\ex(x,y) = \Sx(x,  \gx f(x,y), \lam)
$
%
and
$
\ey(x,y) = \Sy(y, -\gy f(x,y), \rho_1).
$
Then:
\end{comment}
%
%
For~$x \in X,$~$\xi \in \XX$,~$\bar\lam > 0$ define functionals~$\Sx,\Sy$ by
\begin{equation}
\label{def:strong-measure}
\begin{aligned}
\Sx^2(x,\xi,\bar\lam) 
	&:= 2\bar\lam\max_{u \in X} \big\{-\langle \xi, u-x \rangle - \tfrac{\bar\lam}{2} \|u-x\|^2 \big\}
	\quad \text{for \, $x \in X,$~$\xi \in \XX$,\,~$\bar\lam > 0$}, \\
\Sy^2(y,\eta,\bar\rho) 
	&:= 2\bar\rho\max_{v \in Y} \big\{-\langle \eta, v-y \rangle - \tfrac{\bar\rho}{2} \|v-y\|^2 \big\}
	\quad \,\text{for \; $y \in Y,$\,~$\eta \in \YY$,\,~$\bar\rho > 0$}.
\end{aligned}
\end{equation}
\begin{enumerate}
\item
Under~\cref{ass:gradx}, for~$\fapx \equiv \fapx_0$, at any~$x \in X$ (and regardless of the choice of~$\by \in Y$) we have
\begin{equation}
\label{eq:fne-to-moreau-0}
\|\nabla \vphiapx_{2\Lxx}(x)\| \le 2 \Sx(x,  \gx f(x,\by), \lam).
\end{equation}
\item
Under~\cref{ass:gradx,ass:tensy,ass:measurable} with~$k = 1$, for~$\fapx \equiv \fapx_1$, at any~$x \in X$ and~$y \in Y$ we have
\begin{equation}
\label{eq:fne-to-moreau-1-gen}
\|\nabla \vphiapx_{2\Lxxp_1}(x)\| 
\le
2 \left( \Sx(x, \gx \fapx(x, y), \Lxxp_1) + \sqrt{\tfrac{\Lxx}{\rho_1}} \Sy(y, -\gy \fapx(x, y), \rho_1) + \sqrt{\Lxx \rho_1} \Dy \right).
\end{equation}
Moreover,
for~$y^\circ [= y^\circ(x)] \in \Argmax_{y \in Y} \fapx(x,y)$ we have
$\|\nabla \vphiapx_{2\Lxxp_1}(x)\| \le 2 \Sx(x,  \gx \fapx(x,y^\circ), \Lxxp_1)$.
\end{enumerate}
\end{lemma}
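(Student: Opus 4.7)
The plan is to handle the three inequalities in turn, using a common template that starts from the characterization $\nabla\vphiapx_{2L}(x) = 2L(x - x^+)$ (cf.~\cref{eq:moreau-explicit}), where $x^+ := \argmin_{u \in X}\{\vphiapx(u) + L\|u-x\|^2\}$ is the prox point corresponding to the weak-convexity modulus $L$ of $\vphiapx$ ($L = \Lxx$ for claim~1 by \cref{ass:gradx}, and $L = \Lxxp_1$ for claim~2 by \cref{lem:gx-lip}); the prox objective is then $L$-strongly convex. For claim~1, $\vphiapx_0(x) = f(x, \by)$ is itself $\Lxx$-smooth, so the weak-convexity lower bound at $x$ with gradient $\gx f(x, \by)$ gives $\vphiapx_0(x) - \vphiapx_0(x^+) \le \lang \gx f(x, \by), x - x^+\rang + \tfrac{\Lxx}{2}\|x - x^+\|^2$, while strong convexity of the prox objective tested at $u = x$ gives the matching lower bound $\vphiapx_0(x) - \vphiapx_0(x^+) \ge \tfrac{3\Lxx}{2}\|x - x^+\|^2$. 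Subtracting yields the key inequality $\lang \gx f(x, \by), x - x^+\rang \ge \Lxx\|x - x^+\|^2$, and substituting $u = x^+$ into the variational definition~\eqref{def:strong-measure} of $\Sx^2$ produces $\Sx^2(x, \gx f(x, \by), \Lxx) \ge \Lxx^2\|x - x^+\|^2$, equivalent to the claim. The ``Moreover'' part of claim~2 (the special case $y = y^\circ$) follows by the same template: a Danskin-type theorem (\cref{lem:danskin} in the appendix) places $\gx\fapx_1(x, y^\circ)$ into the weak subdifferential $\partial\vphiapx(x)$, so the argument transfers verbatim upon replacing $\Lxx$ by $\Lxxp_1$ and the smoothness upper bound by the $\Lxxp_1$-weak-convexity lower bound on $\vphiapx(x^+)$.

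The general bound in claim~2 is the main new content. The plan is to introduce the $\rho_1$-regularized surrogate $\vphiapx_\reg(u) := \max_{v \in Y}\{\fapx_1(u, v) - \tfrac{\rho_1}{2}\|v - y\|^2\}$, which remains $\Lxxp_1$-weakly convex (as a max of $\Lxxp_1$-weakly convex functions in $u$) but, by $\rho_1$-strong concavity of its inner problem, admits a unique inner maximizer $\hat v(u)$ and is moreover smooth. Three ingredients are then combined. First, since $|\vphiapx - \vphiapx_\reg| \le \rho_1\Dy^2/2$, the argument of \cref{prop:upper-uniform} adapts to give $\|\nabla\vphiapx_{2\Lxxp_1}(x) - \nabla(\vphiapx_\reg)_{2\Lxxp_1}(x)\| \lesssim \sqrt{\Lxx\rho_1}\,\Dy$, producing the $\Dy$ term. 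Second, applying the ``Moreover'' part of claim~2 to $\vphiapx_\reg$ bounds $\|\nabla(\vphiapx_\reg)_{2\Lxxp_1}(x)\| \le 2\Sx(x, \gx\fapx_1(x, \hat v(x)), \Lxxp_1)$. Third, $\rho_1$-strong concavity tested at $v = y$ gives $\lang \gy f(x, \by), \hat v(x) - y\rang \ge \rho_1\|\hat v(x) - y\|^2$, and plugging this into the definition of $\Sy^2(y, -\gy f(x, \by), \rho_1)$ at $v = \hat v(x)$ yields $\|\hat v(x) - y\| \le \Sy/\rho_1$. Assembling these via the elementary $1$-Lipschitzness of $\Sx(x, \cdot, \Lxxp_1)$ in its second argument (checked directly by comparing the two $\Sx$-maximizers) together with $\|\hxy f(x, \by)\| \le \mu$ then produces the claim in the weakly-coupled regime $\mu \le \sqrt{\Lxxp_1\rho_1}$, where the resulting coefficient $\mu/\rho_1$ on $\Sy$ is absorbed into $\sqrt{\Lxxp_1/\rho_1} \simeq \sqrt{\Lxx/\rho_1}$.

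The main technical obstacle is matching the coefficient $\sqrt{\Lxx/\rho_1}$ on $\Sy$ uniformly, in particular in the strongly-coupled regime $\mu^2 > \Lxx\rho_1$, where the naive chain above yields only the larger $\mu/\rho_1$. The expected remedy is to exploit the saddle-point characterization $\nabla\vphiapx_{2\Lxxp_1}(x) = \gx\fapx_1(x^+, v^*)$ with $v^* \in \Argmax_{v \in Y}\fapx_1(x^+, v)$, obtained from Sion's minimax theorem applied to the prox objective (which is $\Lxxp_1$-strongly convex in $u$ and affine in $v$), and to choose $v^*$ judiciously from this possibly non-unique Argmax so that the extra $\mu$-factor is traded against the $\sqrt{\Lxx\rho_1}\,\Dy$ budget already produced by the regularization step.
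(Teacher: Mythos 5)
Your arguments for claim~1 and for the ``Moreover'' part of claim~2 are correct: the descent-lemma upper bound on $\vphiapx(x)-\vphiapx(x^+)$ paired with the $\Lxx$-strong convexity of the prox objective at $x^+$, followed by testing the definition of $\Sx$ at $u=x^+$, is sound, and \cref{lem:danskin} legitimately supplies $\gx\fapx_1(x,y^\circ)\in\partial\vphiapx(x)$. (The paper itself does not prove these claims from scratch but imports them from \cite[Prop.~5.5]{ostrovskii2020efficient}, so a self-contained derivation is welcome.)

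The general inequality \cref{eq:fne-to-moreau-1-gen}, however, is not established, and this is the load-bearing part. Your chain converts $\Sy$ into the distance $\|\hat v(x)-y\|\le \Sy/\rho_1$ and then pays the coupling constant through $\|\hxy f(x,\by)\|\le\Lxy$, producing the coefficient $\Lxy/\rho_1$ on $\Sy$ instead of $\sqrt{\Lxx/\rho_1}$. These agree only when $\Lxy\lsim\sqrt{\Lxx\rho_1}$, but the paper invokes \cref{eq:fne-to-moreau-1-gen} (in step~\proofstep{2} of the proof of \cref{th:algo-1}) precisely in the complementary regime $\Lxy\ge(\Lxxp_1\rho_1)^{1/2}$, where $\Sy$ is only bounded by $\rho_1\Dy$ and the hypothesis on $\Dy$ is $\sqrt{\Lxxp_1\rho_1}\,\Dy\lsim\veps$ rather than $\Lxy\Dy\lsim\veps$; your weaker bound would then break the downstream argument. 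Your proposed remedy via Sion's theorem and a judicious $v^*\in\Argmax_{v}\fapx_1(x^+,v)$ is a hope, not a proof: it relocates the difficulty to relating $(x^+,v^*)$ back to $(x,y)$, which is the same problem. The way to obtain a $\Lxy$-free coefficient is to never convert $\Sy$ into a distance: by definition, $\Sy^2(y,-\gy\fapx(x,y),\rho_1)/(2\rho_1)$ is exactly the suboptimality of $y$ in $\max_{v\in Y}\{\fapx_1(x,v)-\tfrac{\rho_1}{2}\|v-y\|^2\}$, i.e.\ a \emph{function-value} gap for $\vphiapx_{\reg}$ at $x$. Feeding this value gap (together with the $\rho_1\Dy^2/2$ regularization bias) into the $\Lxxp_1$-strong convexity of $\vphiapx(\cdot)+\Lxxp_1\|\cdot-x\|^2$ --- exactly the mechanism you already use from \cref{prop:upper-uniform} --- perturbs the prox point by $O\bigl(\sqrt{(\Sy^2/\rho_1+\rho_1\Dy^2)/\Lxxp_1}\bigr)$ and hence the Moreau gradient by $O\bigl(\sqrt{\Lxxp_1/\rho_1}\,\Sy+\sqrt{\Lxxp_1\rho_1}\,\Dy\bigr)$, with no appearance of $\Lxy$.

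Two smaller issues: both your regularization step and the corrected argument naturally yield $\Lxxp_1$ rather than $\Lxx$ in the last two terms of \cref{eq:fne-to-moreau-1-gen} (harmless for every use in the paper, but not literally the stated bound); and the claimed $1$-Lipschitzness of $\Sx(x,\cdot,\Lxxp_1)$ does not follow from ``comparing the two maximizers'' --- that comparison, combined with the bound $\Lxxp_1\|u^*-x\|\le\Sx$ on the maximizer, only gives Lipschitz constant $2$, and getting the constant $1$ requires a separate (e.g.\ gradient-based) argument.
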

\begin{proof}
Inequality~\cref{eq:fne-to-moreau-1-gen} follows from~\cite[Proposition~5.5]{ostrovskii2020efficient} after recovering the constant factors from the proof (see~\cite[Appendix A]{ostrovskii2020efficient}), bounding the second term in the right-hand side of the resulting inequality (cf.~\cite[Eq.~(5.6)]{ostrovskii2020efficient}) via Cauchy-Schwarz, and extracting the square root.
For~$y^\circ = y^\circ(x)$ the corresponding term vanishes, and we get~$\|\nabla \vphiapx_{2\Lxxp_1}(x)\| \le 2 \Sx(x,  \gx \fapx(x,y^\circ), \Lxxp_1)$ directly from~\cite[Eq.~(5.6)]{ostrovskii2020efficient}. 
We obtain~\cref{eq:fne-to-moreau-0} by the same argument, as in this case~$\fapx \equiv \fapx_0$ is formally maximized at~$\by$ for any~$x \in X$.
Note that the argument cannot be extended to~$k \ge 2$: in this case~$\fapx_k(x,\cdot)$ is not concave, so~\cite[Proposition~5.5]{ostrovskii2020efficient} cannot be applied anymore.
\end{proof}
%
\begin{comment}
As a result,~$\|\nabla \vphi_{2\Lxx}(\wh x)\| = O(\ex)$ whenever~$(\wh x, \wh y)$ is~$(\ex,\ey)$-FNE for~\cref{opt:min-max}, in the sense of~\cref{eq:stationarity}, with
\begin{eq}
\label{eq:fne-to-moreau-concave-ey}
\ey
\le \min\bigg[ \frac{\ex^2}{\Lxx\Ry}, \ex\sqrt{\frac{\Lyy}{\Lxx}}\,\bigg].
\end{eq}
\end{comment}

\subsection{Proof of~\cref{th:algo-0}}
We first observe that, with~$\gamx = {1}/{\lam}$, the quantity~$\veps_t$ computed in~\cref{line:eps_t-0} of~\cref{alg:zeroth-order} is nothing else but~$\Sx(x_t,  \gx\fapx(x_t,\by), \lam)$. 
Indeed: 
\begin{equation}
\label{eq:grad-norm}
\begin{aligned}
\Sx^2(x_t,  \gx f(x_t,\by), \lam) 
&= \tfrac{1}{\gamx^2} \max_{x \in X} \big\{-2 \langle \gamx \gx f(x_t,\by), x - x_t \rangle - \|x-x_t\|^2 \big\} \\
&= \| \gx f(x_t,\by) \|^2 -  \tfrac{1}{\gamx^2} \min_{x \in X} \| x_{t} - \gamx \gx f(x_{t},\by) - x \|^2 
= \veps_{t}^2.
\end{aligned}
\end{equation}
Thus,~\cref{alg:zeroth-order} maintains~$x^*$ such that
$
\Sx(x^*,  \gx f(x^*,\by), \lam) = \min_{0 \le t \le T-1} \Sx(x_t,  \gx f(x_t,\by), \lam).
$
On the other hand, by the descent lemma (cf.~\cref{ass:gradx}), for each~$t \in \{0, ..., T-1\}$ we have that
\[
\begin{aligned}
f(x_{t+1}, \by) 
&\le f(x_{t}, \by) + \langle \gx f(x_{t}, \by), x_{t+1} - x_t \rangle + \tfrac{\lam}{2} \|x_{t+1} - x_t\|^2 
= f(x_{t}, \by) - \tfrac{1}{2\lam}\Sx^2(x_t,  \gx f(x_t,\by), \lam),
\end{aligned}
\]
where the equality follows from the definition of~$x_{t+1}$.
Whence, via telescoping and~\cref{eq:zeroth-order-complexity} we get
\[
\begin{aligned}
\Sx^2(x^*,  \gx f(x^*,\by), \lam) 
&= \min_{0 \le t < T} \Sx^2(x_t,  \gx f(x_t,\by), \lam) 
\le \tfrac{2\lam}{T} \hspace{-0.05cm} \left[ f(x_0, \by) \hspace{-0.05cm}-\hspace{-0.05cm} f(x_T, \by) \right]
\le \tfrac{2\lam}{T} \hspace{-0.05cm} \left[ \vphi(x_0) \hspace{-0.05cm}-\hspace{-0.05cm} \psi(\by) \right] 
\hspace{-0.05cm}<\hspace{-0.05cm} \tfrac{\veps^2}{144}.
\end{aligned}
\]
By the first claim of~\cref{lem:fne-to-moreau}, cf.~\cref{eq:fne-to-moreau-0}, this gives~$\|\nabla\vphiapx_{2\Lxx}(x^*)\| \le 2\veps^* \le \frac{\veps}{6}$.
Finally, \odima{condition~$24\Lxy \Dy \le \veps$} implies the premise of~\cref{th:upper-bound} with~$k = 0$; applying it we arrive at~$\|\nabla\vphi_{2\Lxx}(x^*)\| \le \veps$.
\qed

\subsection{Proof of~\cref{th:algo-1}}

\proofstep{1}.
We first consider the case~$\mu \le (\Lxxp_1 \rho_1)^{1/2}$, so that~$\Coupled = 0$, and~\cref{eq:first-order-diameter} reduces to
$200\mu \Dy \le {\veps}.$
As~$\fapx := \fapx_1$,~\cref{line:y_t-weak} of~\cref{alg:first-order} reads~$y_t \in \Argmax_{y \in Y} \fapx(x_{t},y)$;  
then the second claim of~\cref{lem:fne-to-moreau} (with~$y_t = y^\circ$) guarantees that
\begin{equation}
\label{e:first-orde-fne-to-moreau}
\|\nabla \vphiapx_{2\Lxxp_1}(x_t)\| \le 2 \Sx(x_t,  \gx \fapx(x_t,y_t), \Lxxp_1).
\end{equation}
Let us now upper-bound~$\Sx(x_t,  \gx \fapx(x_t,y_t), \Lxxp_1)$. 
\begin{comment}
Now, for some~$\delta > 0$ that will be chosen later on, let us define the regularized objective~$f^\reg$ by
\begin{equation}
\label{def:freg}
\begin{aligned}
f^\reg(x,y) := f(x,y) - \frac{\delta}{2\Dy} \|y - \by\|^2. 
%
%
\end{aligned}
\end{equation}
Accordingly, let~$\vphi^\reg(x) := \max_{y \in Y} f^\reg(x,y)$. %
%
Since the regularization term and its gradient vanish at~$y = \by$, the first-order model for~$f^{\reg}$ is the same as for~$f$, that is~$\fapx(x,y)$. 
\end{comment}
To this end, for~$\delta > 0$ to be chosen later, consider
\begin{equation}
\label{def:freg}
\begin{aligned}
f_{\reg}(x,y) := f(x,y) - \tfrac{1}{2} \delta \|y - \by\|^2, \quad
\vphi_{\reg}(x) := \max_{y \in Y} f_{\reg}(x,y).
\end{aligned}
\end{equation}
Since~$f_{\reg}(x,\cdot)$ is~$\delta$-strongly concave, by Danskin's theorem (cf.~\cite[Lem.~24]{nouiehed2019solving})~$\vphi_{\reg}$ is differentiable, 
and $
\nabla \vphi_{\reg}(x) = \gx f(x,y_{\reg}(x))$ with~$y_{\reg}(x) := \argmax_{y \in Y} f_{\reg}(x,y)$,
and is~$\lam_{\reg}$-Lipschitz with
$
\lam_{\reg} := \lam + \frac{\Lxy^2}{\delta}.
$
Now, let us choose~$\delta = {\mu^2}/{\Lxxp_1},$ so that~$\lam_{\reg} = \lam + \Lxxp_1 \in [\Lxxp_1, 2\Lxxp_1]$. 
By the descent lemma\odima{,} %
\begin{align}
\label{eq:first-order-ex}
&\vphi_{\reg}(x_{t+1}) - \vphi_{\reg}(x_{t}) \nn
&\le  \langle \nabla \vphi_{\reg}(x_{t}), x_{t+1} - x_t \rangle + \tfrac{1}{2} \lam_{\reg} \|x_{t+1} - x_t\|^2 
= \langle \gx f(x_{t},y_{\reg}(x_t)), x_{t+1} - x_t \rangle + \tfrac{1}{2} \lam_{\reg} \|x_{t+1} - x_t\|^2 \nn
&\le \langle \gx f(x_{t},y_t), x_{t+1} - x_t \rangle + \tfrac{3}{4}\lam_{\reg} \|x_{t+1} - x_t\|^2 + \tfrac{1}{\lam_{\reg}}\| \gx f(x_{t},y_{\reg}(x_t)) - \gx f(x_{t},y_t) \|^2 \nn
&\le \langle \gx f(x_{t},y_t), x_{t+1} - x_t \rangle + \tfrac{3}{4}\lam_{\reg} \|x_{t+1} - x_t\|^2 + \tfrac{1}{\lam_{\reg}} \mu^2 \Dy^2 \nn
&\le \langle \gx f(x_{t},y_t), x_{t+1} - x_t \rangle + \tfrac{1}{2}\left({3\Lxxp_1} + {\mu^2}/{\rho_1}\right) \|x_{t+1} - x_t\|^2 + \tfrac{1}{\Lxxp_1^{\vphantom{A}}}\mu^2 \Dy^2 \nn
&= \min_{x \in X} \left\{\langle \gx f(x_{t},y_t), x - x_t \rangle + \tfrac{1}{2}\left({3\Lxxp_1} + {\mu^2}/{\rho_1}\right) \|x - x_t\|^2 \right\} + \tfrac{1}{\Lxxp_1^{\vphantom{A}}}\mu^2 \Dy^2 \nn
&= -\tfrac{1}{6\Lxxp_1 + 2\mu^2/\rho_1} \Sx^2 \left(x_t,  \gx f(x_t,y_t), {3\Lxxp_1} + {\mu^2}/{\rho_1} \right) + \tfrac{1}{\Lxxp_1^{\vphantom{A}}} \mu^2 \Dy^2 \nn
&\le -\tfrac{1}{8\Lxxp_1} \Sx^2 \left(x_t,  \gx f(x_t,y_t), {3\Lxxp_1^{\vphantom{A}}} + {\mu^2}/{\rho_1} \right) + \tfrac{1}{\Lxxp_1^{\vphantom{A}}} \mu^2 \Dy^2.
\end{align}
Here the second inequality is by Cauchy-Schwarz, the next one is via~\cref{ass:gradx}, 
and the subsequent identities are by the definitions of~$x_{t+1}$ and~$\gamx$ (cf.~\cref{line:x_t-proj} of~\cref{alg:first-order}). (Note that the factor~$\mu^2/\rho_1$ can be upper-bounded with~$\Lxxp_1$ by the standing assumption, but we avoid this in order for our estimates to have a similar form as in the strongly coupled case (cf.~\proofstep{2}) which is yet to be considered.)
We now proceed as in~\cref{eq:grad-norm}: by telescoping~\cref{eq:first-order-ex} we get
\begin{align}
&\Sx^2 \left(x^*,  \gx f(x^*, y^*), 3\Lxxp_1 + {\mu^2}/{\rho_1} \right) 
= \min_{0 \le t \le T-1} \Sx^2 \left(x_t,  \gx f(x_t,y_t), 3\Lxxp_1 + {\mu^2}/{\rho_1} \right) \nn
&\le \tfrac{8}{T} \Lxxp_1 \left( \vphi_{\reg}(x_0) - \vphi_{\reg}(x_T) \right)  + 8\mu^2\Dy^2 
\le \tfrac{8}{T} \Lxxp_1 \left( \vphi(x_0) - \vphi(x_T) \right)  + 12\mu^2\Dy^2. 
\label{eq:first-order-telescoping}
\end{align}
where in the final step we plugged in~$\delta = {\mu^2}/{\Lxxp_1}$.
As a result,
\begin{align*}
&\Sx^2(x^*,  \gx \fapx(x^*,y^*), \Lxxp_1) \\
&\le 
\Sx^2(x^*,  \gx \fapx(x^*,y^*), 4\Lxxp_1)  = 8\Lxxp_1 \max_{x \in X} \left\{-\langle \gx \fapx(x^*,y^*), x - x^* \rangle - 2\Lxxp_1 \|x - x^*\|^2 \right\} \\ 
&\le
8\Lxxp_1 \max_{x \in X} \left\{-\langle \gx f(x^*,y^*), x - x^* \rangle  - \tfrac{3}{2}\Lxxp_1  \|x - x^*\|^2\right\} + 4\| \gx f(x^*,y^*) - \gx \fapx(x^*,y^*) \|^2 \\
&=
\tfrac{4}{3} \Sx^2(x^*,  \gx f(x^*,y^*), 3\Lxxp_1) + 4\| \gx f(x^*,y^*) - \gx \fapx(x^*,y^*) \|^2 \\
&\le 
\tfrac{4}{3} \Sx^2(x^*,  \gx f(x^*,y^*), 3\Lxxp_1) + 16\mu^2\Dy^2 
\le 
32 \left( \tfrac{\Lxxp_1}{3T}[\vphi(x_0) - \vphi(x_T)]  + \mu^2\Dy^2 \right)
\le
\left( \tfrac{11}{2100}  + \tfrac{33}{40000} \right) \veps^2 
<
\tfrac{1}{144} \veps^2. 
\end{align*}
Here the first estimate relies on the proximal Polyak-\L{}ojasiewicz (PL) inequality that ensures that~$\Sx(x,\xi,\bar\lam)$ is non-decreasing in~$\bar\lam$ (cf.~\cite[Lemma~1]{karimi2016linear}); the second is by Cauchy-Schwarz, the third is by~\cref{lem:gx-err}, the fourth is by~\cref{eq:first-order-telescoping}, and the last one is by~\eqref{eq:first-order-complexity} \odima{and} \odima{since~$200\mu \Dy \le {\veps}$ (cf.~\eqref{eq:first-order-diameter}).}
Finally, returning to~\cref{e:first-orde-fne-to-moreau} we get~$\|\nabla\vphiapx_{2\Lxxp_1}(x^*)\| \le  \frac{\veps}{6}$, whence~\cref{th:upper-bound} implies that~$\|\nabla\vphi_{2\Lxxp_1}(x^*)\| \le  \veps$. 

\proofstep{2}. 
We now consider the case~$\mu \ge (\Lxxp_1 \rho_1)^{1/2}$, where~\cref{alg:first-order} is run with~$\Coupled = 1$. 
By~\cref{eq:first-order-diameter},
\begin{equation}
\label{eq:first-order-strong-coupling-condition}
200\, \Dy \sqrt{\Lxxp_1\rho_1}\le \veps.
\end{equation}
Casting the update in~\cref{line:y_t-strong} of~\cref{alg:first-order} as
$
y_t = \argmax_{y \in Y} \{ \langle \gy f(x_{t},\by), y - \by \rangle - \frac{\rho_1}{2} \|y - \by\|^2 \}
$
gives
\[
\lang \gy f(x_t,\by) - \rho_1 (y_t - \by), y - y_t\rang \le 0, \quad \forall y \in Y,
\]
from to the first-order optimality condition.
As a result, for any~$0 \le t \le T-1$ we have
\begin{align*}
\Sy^2(y_t, -\gy \fapx(x_t, y_t), \rho_1) 
&= \Sy^2(y_t, -\gy f(x_t, \by),  \rho_1) 
\le 2\rho_1^2\max_{y \in Y} \left\{ - \tfrac{1}{2} \|y-y_t\|^2 + \langle y_t-\by, y-y_t \rangle \right\} 
\le \rho_1^2 \Dy^2. 
\end{align*}
where in the final step we maximized over the whole~$\YY$.
Whence by~\cref{lem:fne-to-moreau} (cf.~\cref{eq:fne-to-moreau-1-gen}) we get %
\begin{equation}
\label{eq:first-order-acc-incomplete}
\begin{aligned}
\|\nabla \vphiapx_{2\Lxxp_1}(x_t)\| 
&\;\le\; 2 \Sx(x_t,  \gx \fapx(x_t,y_t), \Lxxp_1) + 4 \Dy \sqrt{\Lxx \rho_1}. 
\end{aligned}
\end{equation}
Our next goal is to estimate~$\Sx(x_t,  \gx \fapx(x_t,y_t), \Lxxp_1)$ via a telescoping argument. 
To this end, let
\begin{equation}
\label{eq:freg-apx}
\begin{aligned}
\wt f_\reg(x,y) := \fapx_1(x,y) - \tfrac{1}{2} \rho_1 \|y - \by\|^2, \quad
\wt\vphi_{\reg}(x) := \max_{y \in Y} \wt f_{\reg}(x,y).
\end{aligned}
\end{equation}
Recall that~$\gx \fapx_1(\cdot,y)$ is~$\Lxxp_1$-Lipschitz for any~$y \in Y$ (cf.~\cref{lem:gx-lip}); moreover,~$\wt f_{\reg}(x,\cdot)$ is~$\rho_1$-strongly concave. 
Therefore by Danskin's theorem (cf.~\cite[Lemma~24]{nouiehed2019solving}),
$
\nabla \wt\vphi_{\reg}(x) = \gx \fapx(x, \wt y_{\reg}(x))
$
with~$\wt y_{\reg}(x) := \argmax_{y \in Y} \wt f_{\reg}(x,y)$, and is~$\wt\lam_{\reg}$-Lipschitz with
$
\wt\lam_{\reg} := \Lxxp_1 + {\Lxy^2}/{\rho_1}.
$
Moreover, we have
\[
y_t = \wt y_{\reg}(x_t),
\]
as seen by looking at~\cref{line:y_t-strong} of~\cref{alg:first-order} again.
Whence, by the descent lemma (cf.~\cref{eq:first-order-ex}) we get
\begin{align*}
\wt\vphi_{\reg}(x_{t+1}) - \wt\vphi_{\reg}(x_{t}) 
&\le \langle \gx \fapx(x_{t},y_t), x_{t+1} - x_t \rangle + \tfrac{1}{2} \wt\lam_{\reg} \|x_{t+1} - x_t\|^2 \\
&\le \langle \gx \fapx(x_{t},y_t), x_{t+1} - x_t \rangle + \tfrac{1}{2} \left(3\Lxxp_1 + {\mu^2}/{\rho_1} \right) \|x_{t+1} - x_t\|^2 \\
&= \min_{x \in X} \left\{\langle \gx \fapx(x_{t},y_t), x - x_t \rangle + \tfrac{1}{2}\left({3\Lxxp_1} + {\mu^2}/{\rho_1}\right) \|x - x_t\|^2 \right\}\\
&\le -\tfrac{1}{6\Lxxp_1 + 2\mu^2/\rho_1} \Sx^2 \left(x_t,  \gx \fapx(x_t,y_t), 3\Lxxp_1 + {\mu^2}/{\rho_1} \right).
\end{align*}
Here the penultimate line follows by recasting the update in~\cref{line:x_t-strong} of~\cref{alg:first-order} as
\[
\wt x_{t+1} 
\left[= x_{t} - \gamx \gx f(x_{t},\by) - \gamx \hxy f(x_{t},\by) (y_{t} - \by) \right]
= x_{t} - \gamx \gx \fapx(x_{t},y_t).
\] 
Moreover, for the same reason we have (cf.~\cref{eq:grad-norm})
\begin{equation}
\begin{aligned}
\veps_{t}^2 
&= \| \gx \fapx(x_t,y_t) \|^2 -  \tfrac{1}{\gamx^2} \min_{x \in X} \| x_{t} - \gamx \gx \fapx(x_{t},y_t) - x \|^2 
= \Sx^2\left(x_t,  \gx \fapx(x_t,y_t), 3\Lxxp_1+{\mu^2}/{\rho_1} \right). 
\end{aligned}
\end{equation}
Whence, proceeding as in~\cref{eq:first-order-telescoping} we get
\begin{align*}
\Sx^2 \left(x^*, \gx \fapx(x^*, y^*), 3\Lxxp_1 + {\mu^2}/{\rho_1} \right) 
&\le \tfrac{1}{T} \left(6\Lxxp_1  + {2\mu^2}/{\rho_1} \right) \left(\wt\vphi_{\reg}(x_0) - \wt\vphi_{\reg}(x_T) \right)\nn
&\stackrel{(i)}{\le} \tfrac{2}{T}\left(3\Lxxp_1  + {\mu^2}/{\rho_1} \right) \left( \vphiapx(x_0) - \vphiapx(x_T) + \tfrac{1}{2} \rho_1 \Dy^2 \right)\nn
&\stackrel{(ii)}{\le} \tfrac{2}{T}\left(3\Lxxp_1  + {\mu^2}/{\rho_1} \right) \left( \vphi(x_0) - \vphi(x_T) + \tfrac{3}{2} \rho_1 \Dy^2 \right).
\end{align*}
Here for~$(i)$ we used the two inequalities~$\wt\vphi_{\reg}(x_0) - \wt\vphi_\reg(x_T) \le \vphiapx(x_0) - \max_{y \in Y} \{\fapx_1(x_T,y) - \frac{1}{2} \rho_1 \|y - \by\|^2\}$ 
and~$ \vphiapx(x_T) \le \max_{y \in Y} \{\fapx_1(x_T,y) - \frac{1}{2} \rho_1 \|y - \by\|^2\} + \frac{1}{2} \rho_1 \Dy^2$ (cf.~\cref{eq:freg-apx}); for~$(ii)$ we applied~\cref{lem:fval-err} to the right-hand side of~$|\vphiapx(x) - \vphi(x)| \le \max_{y \in Y} |f(x,y) - \fapx_1(x,y)|$. 
Returning to~\cref{eq:first-order-acc-incomplete} we now estimate
\begin{align*}
\|\nabla \vphiapx_{2\Lxxp_1}(x^*)\| 
&\le 2 \Sx\left(x_t,  \gx \fapx(x_t,y_t), 3\Lxxp_1 + {\mu^2}/{\rho_1} \right) + 4\Dy\sqrt{\Lxx \rho_1} \\
&\le \sqrt{\tfrac{8}{T}\left(3\Lxxp_1  + {\mu^2}/{\rho_1} \right) \left( \vphi(x_0) - \vphi(x_T) + \tfrac{3}{2} \rho_1 \Dy^2 \right)} + 4 \Dy\sqrt{\Lxx \rho_1} \\
&\stackrel{\cref{eq:first-order-strong-coupling-condition}}{\le} 
\sqrt{\tfrac{8}{T}\left(3\Lxxp_1  + {\mu^2}/{\rho_1} \right) \Big( \vphi(x_0) - \vphi(x_T) + \tfrac{3}{80000 \Lxxp_1^{\vphantom{A^a}}} \veps^2 \Big)} + \tfrac{1}{50} \veps \\
&\le
\sqrt{\tfrac{8}{T}\left(3\Lxxp_1  + {\mu^2}/{\rho_1} \right) \left( \vphi(x_0) - \vphi(x_T) \right)} + \tfrac{3}{100} \veps \sqrt{\tfrac{1}{3\Lxxp_1^{\vphantom{A^a}}T} \left( 3\Lxxp_1  + \mu^2/\rho_1 \right)} + \tfrac{1}{50} \veps \\
&\stackrel{\cref{eq:first-order-complexity}}{\le}
\Big( \sqrt{\tfrac{8}{700}}  + \tfrac{2\sqrt{3}}{100} + \tfrac{1}{50} \Big) \veps 
< \tfrac{1}{6}\veps.
\end{align*}
Finally, by applying~\cref{th:upper-bound} we conclude that~$\|\nabla \vphi_{2\Lxxp_1}(x^*)\| \le \veps$, as required.
\qed

\subsection{Proof of~\cref{prop:subgradient-scheme}}
We first observe that~$\fapx(\cdot, y) \equiv \fapx_2(\cdot,y)$ is~$(\sigma_0+\sigma_2\Dy^2)$-Lipschitz for any~$y \in Y$ as can be seen from
\begin{equation}
\label{eq:quad-grad-bound}
\| \gx \fapx(x,y) \| \le \| \gx f(x,y) \| + \|\gx \fapx(x,y) -  \gx f(x,y) \| \le \sigma_0 + \sigma_2 \Dy^2,
\end{equation}
where the last step is by~\cref{lem:gx-err}. Moreover, by~\cref{lem:gx-lip}~$\gx\fapx(\cdot,y)$ is~$\Lxxp_2$-Lipschitz, and thus~$\vphiapx$ is~$\Lxxp_2$-weakly convex.
These two observations allow us to adapt the analysis of the projected subgradient method from~\cite[Theorem 31]{jin2020local} (initially carried out in~\cite{davis2018stochastic}) to~\cref{alg:second-order} in order to establish that~$x^*$ is an approximate FOSP for~\cref{opt:min-max-apx-2} -- and after that allude to~\cref{th:upper-bound}.

For brevity, let~$\fapx \equiv \fapx_2$ and~$\vphiapx = \max_{y \in Y} \fapx(x,y)$. 
Observe that, for any~$x \in X$ and iterate~$(x_t,y_t)$, \vspace{-0.2cm}
\begin{align}
\vphiapx(x)
\ge \fapx(x,y_t)  
&\ge \fapx(x_t,y_t) + \langle \gx \fapx(x_t,y_t), x - x_t \rangle - \tfrac{1}{2}\Lxxp_2 \| x - x_t \|^2 \notag\\ 
&\ge \vphiapx(x_t) - \delta + \langle \gx \fapx(x_t,y_t), x - x_t \rangle - \tfrac{1}{2} \Lxxp_2 \| x - x_t \|^2,
\label{eq:quad-fun-decrease}
\end{align}
where the last step is by definition of~\ApproxMax.
Moreover, let~$x_t^+ = \argmin_{x \in X} \{ \vphiapx(x) + \Lxxp_2 \|x - x_t\|^2 \}$ be the proximal mapping of~$x_t$, so that (cf.~\cref{eq:moreau-explicit})
\begin{equation}
\label{eq:quad-moreau-step}
\nabla\vphiapx_{2\Lxxp_2}(x_t^{\vphantom+})= 2\Lxxp_2 (x_t^{\vphantom+} - x_t^+). 
\end{equation}

\proofstep{1}. 
Let~\cref{alg:second-order} be run with~$\Naive = 0$, i.e.,~$x_{t+1} = \proj_{X} [x_{t} - \gamx \gx \fapx(x_{t},y_t)]$; cf.~\cref{line:grad-2-approx}. 
Then
\begin{align}
\label{eq:quad-chained-simple}
	\vphiapx_{2\Lxxp_2}(x_{t+1}) 
&\stackrel{\phantom{\cref{eq:quad-grad-bound}}}{\le}
	\vphiapx(x_t^+) + \Lxxp_2 \|x_{t+1}^{\vphantom+} - x_t^+\|^2 \notag\\
&\stackrel{\phantom{\cref{eq:quad-grad-bound}}}{\le}
	\vphiapx(x_t^+) + \Lxxp_2 \|x_{t} - \gamx \gx \fapx(x_{t},y_t) - x_t^+\|^2 \notag\\
&\stackrel{\cref{eq:quad-grad-bound}}{\le}
	\vphiapx(x_t^+) + \Lxxp_2 \|x_{t}^{\vphantom+} - x_t^+\|^2 + 2 \gamx \Lxxp_2  \langle \gx \fapx(x_{t},y_t), x_t^+ - x_t^{\vphantom+} \rangle 
	+ \gamx^2 \Lxxp_2  (\sigma_0 + \sigma_2 \Dy^2)^2 \notag\\
&\stackrel{\phantom{\cref{eq:quad-grad-bound}}}{=} 
	\vphiapx_{2\Lxxp_2}(x_t) + 2 \gamx\Lxxp_2  \langle \gx \fapx(x_{t},y_t), x_t^+ - x_t^{\vphantom+} \rangle 
	+ \gamx^2 \Lxxp_2  (\sigma_0 + \sigma_2 \Dy^2)^2 \notag\\
&\stackrel{\cref{eq:quad-fun-decrease}}{\le}
	\vphiapx_{2\Lxxp_2}(x_t) + 2 \gamx\Lxxp_2 \left(\vphiapx(x_t^+) - \vphiapx(x_t^{\vphantom+}) + \delta + \tfrac{1}{2} \Lxxp_2 \| x_t^+ - x_t^{\vphantom+} \|^2 \right) 
	+  \gamx^2 \Lxxp_2 (\sigma_0 + \sigma_2 \Dy^2)^2,
\end{align}
where the second line relies on the projection lemma.
Repeating this for~$t \in \{0,...,T-1\}$ we get
\[
\vphiapx_{2\Lxxp_2}(x_T)
\le 	
\vphiapx_{2\Lxxp_2}(x_0) + 2 \gamx \Lxxp_2 \sum_{t = 0}^{T-1}\left(\vphiapx(x_t^+) - \vphiapx(x_t^{\vphantom+}) + \delta + \tfrac{1}{2}\Lxxp_2 \| x_t^+ - x_t^{\vphantom+} \|^2 \right) 
	+ \gamx^2 T \Lxxp_2 (\sigma_0 + \sigma_2 \Dy^2)^2,
\]
whence, by rearranging and dividing over~$2\gamx\Lxxp_2 T$, 
\begin{equation}
\label{eq:quad-telescoped-simple}
\frac{1}{T} \sum_{t = 0}^{T-1} \vphiapx(x_t^{\vphantom+}) - \vphiapx(x_t^+) - \frac{1}{2} \Lxxp_2 \| x_t^+ - x_t^{\vphantom+} \|^2 
\le 
\frac{\vphiapx_{2\Lxxp_2}(x_0) - \vphiapx_{2\Lxxp_2}(x_T)}{2\gamx \Lxxp_2 T} + \frac{1}{2} \gamx(\sigma_0 + \sigma_2 \Dy^2)^2
+ \delta.
\end{equation}
On the  other hand, by~$\Lxxp_2$-strong convexity of~$\vphiapx(\cdot) + \Lxxp_2 \|\cdot - x_t \|^2$ and the definition of~$x_t^+$ we get
\begin{align}
\vphiapx(x_t^{\vphantom+}) - \vphiapx(x_t^+) - \tfrac{1}{2} \Lxxp_2 \| x_t^+ - x_t^{\vphantom+} \|^2 
&= 
\vphiapx(x_t^{\vphantom+}) + \Lxxp_2 \| x_t - x_t \|^2 - (\vphiapx(x_t^+) + \Lxxp_2 \| x_t^+ - x_t^{\vphantom+} \|^2) + \tfrac{1}{2} \Lxxp_2 \| x_t^+ - x_t^{\vphantom+} \|^2 \notag\\
&=
\vphiapx(x_t^{\vphantom+}) + \Lxxp_2 \| x_t - x_t \|^2 - \min_{x \in X} \left\{ \vphiapx(x_t^+) + \Lxxp_2 \| x_t^+ - x_t^{\vphantom+} \|^2 \right\} + \tfrac{1}{2} \Lxxp_2 \| x_t^+ - x_t^{\vphantom+} \|^2 \notag\\
&\ge
\Lxxp_2\| x_t^+ - x_t^{\vphantom+} \|^2 
\stackrel{\cref{eq:quad-moreau-step}}{=} \tfrac{1}{4\Lxxp_2} \big\|\nabla\vphiapx_{2\Lxxp_2}(x_t^{\vphantom+})\big\|^2.
\label{eq:quad-Moreau-grad-simple}
\end{align}
Plugging this into~\cref{eq:quad-telescoped-simple} we arrive at
\begin{align}
\label{eq:quad-exp-bound}
\E [\|\nabla\vphiapx_{2\Lxxp_2}(x_s^{\vphantom+})\|^2]
&\le 
\tfrac{2}{\gamx  T} \Lxxp_2 \left( \vphiapx_{2\Lxxp_2}(x_0) - \vphiapx_{2\Lxxp_2}(x_T) \right) + 2 \Lxxp_2 \gamx (\sigma_0 + \sigma_2 \Dy^2)^2 
+ 4\Lxxp_2 \delta \nn
&\stackrel{(a)}{\le}
\tfrac{2}{\gamx  T} \Lxxp_2 \left( \vphiapx(x_0) - \vphiapx(x_T) \right) + 2 \Lxxp_2 \gamx (\sigma_0 + \sigma_2 \Dy^2)^2 
+ 4\Lxxp_2 \delta \nn
&\stackrel{(b)}{\le}
\tfrac{2}{\gamx  T} \Lxxp_2 (\Gap + \rho_2 \Dy^3) + 2 \Lxxp_2 \gamx (\sigma_0 + \sigma_2 \Dy^2)^2 
+ 4\Lxxp_2 \delta;
\end{align}
here for~$(a)$ we used that~$\vphiapx_{2\Lxxp_2}(x_0) = \min_{x \in X} \{ \vphiapx (x) + \Lxxp_2 \|x - x_0 \|^2 \} \le \vphiapx (x_0)$; for~$(b)$ we used~\cref{lem:fval-err}.
Whence by Markov's inequality, for any~$\prob \in (0,1)$ with probability at least~$1-\prob$ one has
\[
\begin{aligned}
\|\nabla\vphiapx_{2\Lxxp_2}(x_s^{\vphantom+})\|^2
&\le 
\tfrac{4}{\prob} 
\left( \tfrac{1}{\gamx  T}\Lxxp_2 (\Gap + \rho_2 \Dy^3) + \gamx \Lxxp_2 (\sigma_0 + \sigma_2 \Dy^2)^2 
+ 2\Lxxp_2 \delta \right) \\
&\le 
\tfrac{8}{\prob} 
\Big( (\sigma_0 + \sigma_2 \Dy^2) \sqrt{\tfrac{1}{T}\Lxxp_2 (\Gap + \rho_2 \Dy^3)} + \Lxxp_2 \delta \Big) 
< 
\tfrac{1}{150}\veps^2,
\end{aligned}
\]
where for the second line we substituted~$\gamx$ from~\cref{eq:quad-algo-params} (note that this choice of~$\gamx$ balances the two terms) and then performed an explicit calculation by plugging in~$T$ and~$\delta$ from~\cref{eq:quad-algo-params}. 
Finally, under this event~\cref{eq:second-order-diameter} allows to apply~\cref{th:upper-bound} (replacing~$\veps$ with~$\veps/6$) and conclude that~$\|\nabla\vphi_{2\Lxxp_2}(x_s)\| \le \veps$.

\proofstep{2}. Conforming with the second claim we intend to prove, from now on we shall assume that
\begin{equation}
\label{eq:quad-extra-asm}
24 \sigma_2 \Dy^2 \le \veps\sqrt{\prob},
\end{equation}
and consider the iterates of~\cref{alg:second-order} with~$\Naive = 1$---i.e.,~$x_{t+1} = \proj_{X} [x_{t} - \gamx \gx f(x_{t},y_t)]$; cf.~\cref{line:grad-2-naive}.
First of all, let us correct~\cref{eq:quad-chained-simple} for the discrepancy between~$\gx f(x_{t},y_t)$ and~$\gx \fapx(x_{t},y_t)$: to this end, 
\begin{align*}
	\vphiapx_{2\Lxxp_2}(x_{t+1}) 
&\le 
	\vphiapx(x_t^+) + \Lxxp_2 \|x_{t+1}^{\vphantom+} - x_t^+\|^2 \\
&\stackrel{(a)}{\le}
	\vphiapx(x_t^+) + \Lxxp_2 \|x_{t} - \gamx \gx f(x_{t},y_t) - x_t^+\|^2 \\
&\stackrel{(b)}{\le}
	\vphiapx_{2\Lxxp_2}(x_t) + 2\Lxxp_2 \gamx \langle \gx f(x_{t},y_t) , x_t^+ - x_t^{\vphantom+}   \rangle + \Lxxp_2 \gamx^2 \sigma_0^2  \\
&\stackrel{(c)}{\le}
	\vphiapx_{2\Lxxp_2}(x_t) + 2\Lxxp_2 \gamx \left( \langle \gx \fapx(x_{t},y_t) , x_t^+ - x_t^{\vphantom+}   \rangle  + \sigma_2 \Dy^2 \| x_t^+ - x_t^{\vphantom+} \| \right)  + \Lxxp_2\gamx^2 \sigma_0^2  \\
&\stackrel{(d)}{\le} 
	\vphiapx_{2\Lxxp_2}(x_t) + 2\gamx\Lxxp_2 \left( \langle \gx \fapx(x_{t},y_t) , x_t^+ - x_t^{\vphantom+}   \rangle  + \tfrac{1}{2} \Lxxp_2 \| x_t^+ - x_t^{\vphantom+} \|^2 + \tfrac{1}{2\Lxxp_2}\sigma_2^2 \Dy^4 \right)   + \Lxxp_2 \gamx^2 \sigma_0^2  \\
&\stackrel{\cref{eq:quad-fun-decrease}}{\le}
	\vphiapx_{2\Lxxp_2}(x_t) + 2\gamx\Lxxp_2 \left( \vphiapx(x_t^+) - \vphiapx(x_t^{\vphantom+}) + \delta + \Lxxp_2 \| x_t^+ - x_t^{\vphantom+} \|^2 + \tfrac{1}{2\Lxxp_2} \sigma_2^2 \Dy^4 \right)   + \Lxxp_2 \gamx^2 \sigma_0^2.
\end{align*}
Here~$(a)$ is by the projection lemma;~$(b)$ by the Lipschitz assumption;~$(c)$ by Cauchy-Schwarz and~\cref{lem:gx-err};~$(d)$ by~$uv \le (u^2 + v^2)/2$ for~$u,v \in \R$.
Whence we arrive at a counterpart of~\cref{eq:quad-telescoped-simple}: 
\begin{equation}
\label{eq:quad-telescoped-complex}
\frac{1}{T} \sum_{t = 0}^{T-1} \vphiapx(x_t^{\vphantom+}) - \vphiapx(x_t^+) - \Lxxp_2 \| x_t^+ - x_t^{\vphantom+} \|^2 
\le 
\frac{\vphiapx_{2\Lxxp_2}(x_0) - \vphiapx_{2\Lxxp_2}(x_T)}{2\gamx \Lxxp_2 T} + \frac{\gamx \sigma_0^2}{2}
+ \delta + \frac{\sigma_2^2 \Dy^4}{2\Lxxp_2}. 
\end{equation}
On the other hand, in the same spirit as for~\cref{eq:quad-Moreau-grad-simple} we get
\begin{equation}
\label{eq:quad-Moreau-grad-complex}
\begin{aligned}
\vphiapx(x_t^{\vphantom+}) - \vphiapx(x_t^+) - \Lxxp_2 \| x_t^+ - x_t^{\vphantom+} \|^2 
&= 
\vphiapx(x_t^{\vphantom+}) + \Lxxp_2 \| x_t - x_t \|^2 - (\vphiapx(x_t^+) + \Lxxp_2 \| x_t^+ - x_t^{\vphantom+} \|^2) \\
&=
\vphiapx(x_t^{\vphantom+}) + \Lxxp_2 \| x_t - x_t \|^2 - \min_{x \in X} \left\{ \vphiapx(x_t^+) + \Lxxp_2 \| x_t^+ - x_t^{\vphantom+} \|^2 \right\} \\
&\ge
\tfrac{1}{2} \Lxxp_2\| x_t^+ - x_t^{\vphantom+} \|^2 
\stackrel{\cref{eq:quad-moreau-step}}{=} \tfrac{1}{8\Lxxp_2}\big\|\nabla\vphiapx_{2\Lxxp_2}(x_t^{\vphantom+})\big\|^2.
\end{aligned}
\end{equation}
Plugging~\cref{eq:quad-Moreau-grad-complex} into~\cref{eq:quad-telescoped-complex} and proceeding in the same way as in~\cref{eq:quad-exp-bound}, we conclude that w.p.~$\ge 1-\prob$,
\[
\begin{aligned}
\|\nabla\vphiapx_{2\Lxxp_2}(x_s^{\vphantom+})\|^2
&\le 
\tfrac{8}{\prob} 
\left( \tfrac{1}{\gamx T} \Lxxp_2 (\Gap + \rho_2 \Dy^3) + \gamx \Lxxp_2 \sigma_0^2 
+ 2\Lxxp_2 \delta + \sigma_2^2 \Dy^4\right) \\
&\le 
\tfrac{16}{\prob} 
\Big( \sigma_0 \sqrt{\tfrac{1}{T} \Lxxp_2 (\Gap + \rho_2 \Dy^3)} + \Lxxp_2 \delta \Big) + \tfrac{8}{\prob}\sigma_2^2 \Dy^4
\stackrel{\cref{eq:quad-extra-asm}}{\le}
\left(\tfrac{1}{75} + \tfrac{1}{72}\right) \veps^2
< \tfrac{1}{36}\veps^2.
\end{aligned}
\]
In order to verify the claim, it remains to apply~\cref{th:upper-bound}. 
\qed

\section{Background on weak convexity and the Moreau envelopes}
\label{app:weakly-convex}

\subsection{Characterization of weakly convex functions}
\label{app:weakly-convex-basic}
Let~$\XX$ be a finite-dimensional Euclidean space with inner product~$\lang \cdot, \cdot \rang$  and norm~$\|x\| = \sqrt{\lang x, x \rang}$. Moreover, we identify with~$\XX$ with its dual space~$\XX^*$ (in particular,~$\|\cdot\|^* = \|\cdot\|$). Also we let~$\bar \R = (-\infty, +\infty]$ and tacitly assume that all arising convex functions are lsc and {proper} (not identically~$+\infty$).
Recall that any such function~$\phi: \XX \to \bar\R$ admits a variational representation as the pointwise supremum over a non-empty family of affine functions, and this is in fact a criterion (see, e.g.,~\cite[Thm.~8.13]{rockafellar2009variational}). 
The set of all affine minorants of~$\phi$ at given~$x \in \XX$ generates the corresponding set of linear functionals, 
called the {\em subdifferential of~$\phi$ at~$x$} and denoted~$\partial \phi(x)$---i.e.,
\begin{equation}
\label{eq:subdifferential-cvx}
\partial \phi(x) := \{ \xi \in \XX: \; \phi(x') \ge \phi(x) + \langle \xi, x'-x \rangle \;\; \forall x' \in \XX\}. 
\end{equation}
This set is non-empty at any interior point of {\em effective domain}~$\dom(\phi) := \{x \in \XX: \phi(x) < +\infty\}$, and its elements are called {\em subgradients of~$\phi$ at~$x$}. 
(For simplicity, we assume~${\rm int}(\dom(\phi))$ is non-empty.)

\begin{definition}
\label{def:weak-convexity}
$\phi: \XX \to \bar\R$ is called {\em$\lam$-weakly convex} (for~$\lam \ge 0$) if the function
$
\phi(\cdot) + \frac{\lam}{2} \|\cdot\|^2
$
is convex.
\end{definition}

From the above characterization of (proper, lsc) convex functions in terms of subdifferentials it immediately follows that~$\phi$ is~$\lam$-weakly convex if and only if the function
$
\phi_{\lam,x}(\cdot) := \phi(\cdot) + \tfrac{\lam}{2} \|\cdot-x\|^2
$
is convex for any~$x \in \XX$. 
Indeed, by~\cref{def:weak-convexity}~$\phi_{\lam,0}$ is convex; meanwhile, for any~$x \in \XX$ we have 
\[
\phi_{\lam, x}(u) = \phi_{\lam, 0}(u) + \tfrac{\lam}{2} \left( \|u - x\|^2 - \|u\|^2 \right).
\]
By applying~\cref{eq:subdifferential-cvx} to~$\phi_{\lam,0}$ we see that~$\phi_{\lam, x}$ has subdifferential
$
\partial \phi_{\lam,x}(u) = \partial \phi_{\lam,0}(u) - \lam x
$
(the difference being in the Minkowski sense), and thus is convex with~$\dom(\phi_{\lam,x}) = \dom(\phi)$. 
It is then natural to define the {\em weak subdifferential} of~$\phi$ at~$x$ by
$
\partial\phi(x) = \partial\phi_{\lam,x}(x). 
$
Equivalently,~$\partial\phi(u) = \partial\phi_{\lam,x}(u) + \lam (x-u)$ for any~$ u,x \in \XX$---or, in other words,
\begin{equation}
\label{eq:subdifferential-weak}
\partial \phi(x) = \{ \xi \in \XX: \; \phi(x') \ge \phi(x) + \langle \xi, x'-x \rangle - \tfrac{\lam}{2} \|x'-x\|^2, \;\; \forall x,x' \in \XX\}. 
\end{equation}
Thus,~$\partial \phi(x)$ belongs to the Fr\'echet (local) subdifferential at~$x$, defined as the set of~$\xi \in \XX$ satisfying
\begin{equation}
\label{eq:subdifferential-frechet}
\phi(x') \ge \phi(x) + \langle \xi, x'-x \rangle + o(\|x'-x\|) \quad \text{as} \;\; x' \to x.
\end{equation}
cf.~\cite[Def.~8.3]{rockafellar2009variational}. Generally, the property in~\cref{eq:subdifferential-frechet} is much weaker than that in~\cref{eq:subdifferential-weak}; 
however, for weakly convex functions the weak and Fr\'echet subdifferentials coincide (see~\cite[Theorem~12.17]{rockafellar2009variational}). 

\paragraph{Hessian-based criterion.}
Recall Alexandrov's theorem (\cite{alexandrov1939application}, see also~\cite[Thm.~2.1]{rockafellar1999second}): a convex~$\phi: \XX \to \bar\R$ is twice differentiable almost everywhere
(and hence~$\nabla^2 \phi \succeq 0$) in the interior of its domain.
Under~$C^1$ regularity it is a criterion: if~$\phi: \XX \to \bar\R$ is continuously differentiable on~${\rm int}(\dom(\phi))$, and~$\nabla^2 \phi \succeq 0$ almost everywhere on~${\rm int}(\dom(\phi))$, then~$\phi$ is convex. 
From Definition~\ref{def:weak-convexity} we see that this criterion extends to~$\lam$-weakly convex functions if we replace~$\nabla^2 \phi \succeq 0$ with~$\nabla^2 \phi \succeq -\lam I$.

\subsection{Upper envelope as a weakly convex function}

Let~$X \subseteq \XX$ be convex with nonempty interior. If~$\phi \to X$ is~$\lam$-smooth, i.e., such that~$\nabla \phi$ exists and
\[
\|\nabla \phi(x') - \nabla \phi(x) \| \le \Lxx \|x' - x\| \quad \forall x,x' \in X,
\]
then~$\phi$ is~$\lam$-weakly convex. (This follows from the Hessian-vased criterion above, as~$\nabla \phi(x)$ exists almost everywhere on~$X$ and its eigenvalues are in~$[-\lam,\lam]$.) 
More generally, the upper envelope of a family of such functions is~$\lam$-weakly convex. That is, given~$f: X \times Y$ (in the setup of~\cref{opt:min-max}) satisfying~\cref{ass:gradx} with~$\lam < \infty$, the \odima{max-function}~$\vphi(x) := \max_{y \in Y} f(x,y)$ is~$\lam$-weakly convex.
Indeed, note that~$f_{\lam,x'}(\cdot, y) = f(\cdot, y) + \frac{\lam}{2}\| \cdot - x'\|^2$ is convex for any~$y \in Y$ (cf.~\cref{def:weak-convexity}), therefore 
\[
\vphi(x) + \tfrac{\lam}{2} \|x-x'\|^2 = \max_{y \in Y} f_{\lam,x'}(x, y)
\]
is convex, i.e.,~$\vphi$ is~$\lam$-weakly convex. 
Moreover,~$\partial \vphi(x)$ can be expressed explicitly through~$\gx f(x,y)$.
\begin{lemma}
\label{lem:danskin}
Let~$f: X \times Y \to \R$ be continuous and satisfy Assumption~\ref{ass:gradx} with~$\lam < \infty$,~$X$ be convex, and~$Y$ be convex and compact.
Then~$\partial \vphi(x)$ is the closed convex hull of the set of active gradients:
\[
\partial \vphi(x) = \clconv \{ \gx f(x,y^*), \forall y^* \in \textstyle\Argmax_{y \in Y} f(x,y) \}.
\]  
\end{lemma}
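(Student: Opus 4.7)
The plan is to reduce the statement to the classical Danskin theorem for convex functions, applied to the $\lam$-regularized envelope $\vphi_{\lam,x'}(\cdot) := \vphi(\cdot) + \tfrac{\lam}{2}\|\cdot - x'\|^2$ for an arbitrary reference point $x' \in \XX$. As noted in~\cref{app:weakly-convex-basic}, under~\eqref{eq:gradx-lip} with $\lam < \infty$ each slice $f(\cdot,y)$ is $\lam$-weakly convex, so the regularized slices
\[
f_{\lam,x'}(\,\cdot\,,y) := f(\,\cdot\,,y) + \tfrac{\lam}{2}\|\cdot - x'\|^2
\]
are genuinely convex (and continuous) on $X$ for every $y \in Y$. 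Consequently $\vphi_{\lam,x'}(x) = \max_{y \in Y} f_{\lam,x'}(x,y)$ is a pointwise maximum of a compact family of continuous convex functions, whence (proper, lsc) convex on $X$.

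Next I would invoke the classical Danskin--Bertsekas theorem (see, e.g.,~\cite[Prop.~B.25]{bertsekas1999nonlinear} or~\cite[Thm.~2.87]{mordukhovich2006variational}): for a continuous $g: X \times Y \to \R$ with $Y$ compact and $\gx g$ jointly continuous, if $g(\cdot,y)$ is convex for every $y \in Y$, then the subdifferential of $G(x) := \max_{y\in Y} g(x,y)$ equals the closed convex hull of the active gradients,
\[
\partial G(x) = \clconv\{\gx g(x,y^*): y^* \in \Argmax_{y \in Y} g(x,y)\}.
\]
Applying this to $g = f_{\lam,x'}$ and noting that the quadratic term does not depend on $y$, so that $\Argmax_{y \in Y} f_{\lam,x'}(x,y) = \Argmax_{y \in Y} f(x,y) =: Y^*(x)$, while $\gx f_{\lam,x'}(x,y) = \gx f(x,y) + \lam(x-x')$, yields
\[
\partial \vphi_{\lam,x'}(x) \;=\; \clconv\{\gx f(x,y^*) : y^* \in Y^*(x)\} + \lam(x-x').
\]

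Finally, I would set $x' = x$ and use the defining identity $\partial \vphi(x) = \partial\vphi_{\lam,x}(x)$ from~\cref{app:weakly-convex-basic}: the shift term $\lam(x-x')$ vanishes, and we obtain
\[
\partial \vphi(x) \;=\; \clconv\{\gx f(x,y^*) : y^* \in Y^*(x)\},
\]
as claimed. The only minor obstacle is to make sure that the cited convex Danskin statement applies under our hypotheses; continuity of $f$ is assumed, joint continuity of $\gx f$ follows from~\eqref{eq:gradx-lip}, compactness of $Y$ is granted, and convexity of $Y$ plays no role in the classical argument. No further technicalities are needed because the reduction through the regularization already translates the weakly convex case to a strictly convex one in a single line.
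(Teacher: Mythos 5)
Your proposal is correct and follows essentially the same route as the paper: regularize each slice $f(\cdot,y)$ by $\tfrac{\lam}{2}\|\cdot-x'\|^2$ to make it convex, apply the classical Danskin theorem to the regularized envelope $\vphi_{\lam,x'}$, observe that the quadratic term neither changes the argmax set in $y$ nor does more than shift the active gradients by $\lam(x-x')$, and undo the shift via the definition of the weak subdifferential. No gaps.
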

\begin{proof}
We apply Danskin's theorem (\cite[Sec.~B.5]{bertsekas1997nonlinear}) to the function
$
\vphi_{\lam,x'}(x) = \max_{y \in Y} f_{\lam,x'}(x, y),
$
\[
\begin{aligned}
\partial \vphi_{\lam,x'}(x) 
&= \clconv \{ \gx f_{\lam,x'}(x,y^*), y^* \in \textstyle\Argmax_{y \in Y} f_{\lam,x'}(x,y) \}, \\
&= \clconv \{ \gx f(x,y^*) + \lam(x-x'), y^* \in \textstyle\Argmax_{y \in Y} f(x,y) \}.\\
\end{aligned}
\]
This holds because each~$f_{\lam,x'}(\cdot,y)$ is convex, and the set of maximizers for~$f_{\lam,x'}(x,y)$ (over~$y \in Y$) is the same as for~$f(x,y)$.
It remains to note that~$\partial \vphi(x) = \partial \vphi_{\lam,x'}(x)-\lam(x-x')$ by~\cref{def:weak-convexity}.
\end{proof}

\subsection{Characterization of~$(\veps,2\lam)$-FOSP in terms of the \odima{max-function}}

The following property of~$(\veps,2\lam)$-FOSP for~\cref{opt:min-max} has been leveraged in the proof~\cref{prop:upper-coupled}. 
It extends~\cite[Lem.~2.2]{davis2019stochastic} to the case~$X \ne \XX$ with guarantee~\cref{eq:moreau-to-primal} in terms of~$\partial\vphi$ rather than~$\partial(\vphi + I_X)$.

\begin{proposition}[{\cite[Proposition~5.1]{ostrovskii2020efficient}}]
\label{prop:moreau-to-primal}
Let~$\phi: X \to \R$ be~$\Lxx$-weakly convex, then it holds that
\[
\nabla \phi_{2\Lxx}(x) = 2\Lxx (x - x^+(x)), \;\; \text{where} \;\; x^+(x) = x^+_{\frac{1}{2\Lxx}\phi}(x) \;\; \text{for any} \;\; x \in X.
\] 
where~$\phi_{2\Lxx}$ is the~$2\lam$-Moreau envelope of~$\phi$ (cf.~\cref{def:moreau}).
Moreover, if~$\|\nabla \phi_{2\Lxx}(x)\| \le \veps$, then
\begin{equation}
\label{eq:moreau-to-primal}
\min_{\xi \in \partial \phi(x^+)} \Sx(x^+,\xi,2\Lxx) \le \veps,
\end{equation}
where we define the functional
$
\Sx^2(x, \xi, \bar\lam) := 2\bar\lam\max_{u \in X} \big\{-\langle \xi, u - x \rangle - \tfrac{\bar\lam}{2} \|u-x\|^2 \big\}
$
for~$x, \xi \in \XX$,~$\bar\lam > 0$.
\end{proposition}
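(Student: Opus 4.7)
}
The plan is to split the proposition into its two assertions and treat them in order, reducing both to standard convex analysis applied to the auxiliary function $\psi_x(u) := \phi(u) + \Lxx \|u - x\|^2$. The starting point is that $\Lxx$-weak convexity of $\phi$ (cf.~\cref{def:weak-convexity} and the discussion around~\cref{eq:subdifferential-weak}) implies that $\psi_x$ is $\Lxx$-strongly convex on $X$, so that the proximal mapping $x^+(x) = \argmin_{u \in X} \psi_x(u)$ is well defined and unique for every $x \in X$.

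First, I would establish the gradient identity $\nabla \phi_{2\Lxx}(x) = 2\Lxx (x - x^+(x))$. To this end I would rewrite
\[
\phi_{2\Lxx}(x) = \Lxx \|x\|^2 + \min_{u \in X} \big\{ \psi(u) - 2\Lxx \langle u, x \rangle \big\}, \qquad \psi(u) := \phi(u) + \Lxx \|u\|^2,
\]
and observe that $\psi + I_X$ is $\Lxx$-strongly convex (again by $\Lxx$-weak convexity of $\phi$), so its Fenchel conjugate $(\psi + I_X)^*$ is $C^1$-smooth with Lipschitz gradient, and its gradient at $y$ equals the unique argmin of $\psi(u) - \langle y, u\rangle$ over $u \in X$. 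Specializing at $y = 2\Lxx x$ identifies this gradient with $x^+(x)$, and the claim follows by the chain rule. The first assertion of the proposition then reads $\nabla \phi_{2\Lxx}(x) = 2\Lxx(x - x^+(x))$.

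Second, I would derive the near-stationarity statement~\cref{eq:moreau-to-primal}. Since $x^+ = x^+(x)$ minimizes the convex (in fact $\Lxx$-strongly convex) function $\psi_x$ over the convex set $X$, the convex first-order optimality condition gives a subgradient $\zeta \in \partial (\psi_x + I_X)(x^+)$ with $\zeta = 0$; by the subdifferential sum rule and the definition of the weak subdifferential (cf.~\cref{eq:subdifferential-weak}), this is equivalent to the existence of $\xi \in \partial \phi(x^+)$ and $\nu \in N_X(x^+)$ such that
\[
\xi + \nu = 2\Lxx(x - x^+) = \nabla \phi_{2\Lxx}(x).
\]
I would then use this decomposition inside $\Sx^2(x^+, \xi, 2\Lxx) = 4\Lxx \max_{u \in X} \{-\langle \xi, u - x^+\rangle - \Lxx\|u - x^+\|^2\}$: the inclusion $\nu \in N_X(x^+)$ yields $\langle \nu, u - x^+\rangle \le 0$ for every $u \in X$, so $-\langle \xi, u - x^+\rangle \le -\langle \xi + \nu, u - x^+\rangle$ on $X$; the inner max over $X$ is therefore bounded above by its unconstrained counterpart with $\xi$ replaced by $\nabla \phi_{2\Lxx}(x)$. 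That unconstrained quadratic is maximized at $u^* = 2x^+ - x$ with value $\Lxx\|x - x^+\|^2$, giving $\Sx^2(x^+,\xi,2\Lxx) \le 4\Lxx^2 \|x - x^+\|^2 = \|\nabla\phi_{2\Lxx}(x)\|^2 \le \veps^2$, which is the claim.

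The main obstacle I anticipate is the rigorous justification of the subdifferential sum rule for the weakly convex function $\phi + I_X$: naively, $\partial \phi$ and $N_X$ live in possibly different subdifferential frameworks (Fréchet versus convex normal cone), and the sum rule need not hold in general nonsmooth calculus. However, for $\Lxx$-weakly convex $\phi$ the weak and Fréchet subdifferentials coincide (as noted after~\cref{eq:subdifferential-frechet}), and transferring the optimality condition to the \emph{convex} function $\psi_x + I_X$ (whose standard convex subdifferential is additive under mild qualification, e.g., $\mathrm{int}(X) \ne \emptyset$) bypasses the issue entirely; the $\xi \in \partial\phi(x^+)$ recovered in the final step is then simply the weakly-convex subgradient corresponding, via the shift $-2\Lxx(x^+ - x)$, to the convex subgradient of $\psi_x$ at $x^+$.
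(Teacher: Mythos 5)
Your proposal is correct. Note that the paper itself does not prove this proposition: it is imported verbatim from~\cite[Proposition~5.1]{ostrovskii2020efficient}, with the gradient identity~\cref{eq:moreau-explicit} in turn attributed to~\cite[Lemma~2.2]{davis2019stochastic}, so what you have written is a self-contained substitute for the citation rather than a variant of an in-paper argument. Both halves of your argument are sound and are essentially the standard route: the identity $\nabla\phi_{2\Lxx}(x)=2\Lxx(x-x^+(x))$ via the Fenchel conjugate of the $\Lxx$-strongly convex function $\psi+I_X$ (differentiability of the conjugate plus the chain rule), and the bound~\cref{eq:moreau-to-primal} via the optimality condition $0\in\partial\phi(x^+)+2\Lxx(x^+-x)+N_X(x^+)$, followed by dropping the normal-cone term (since $\langle\nu,u-x^+\rangle\le 0$ on $X$) and maximizing the resulting unconstrained quadratic, which gives exactly $\Sx^2(x^+,\xi,2\Lxx)\le\|\nabla\phi_{2\Lxx}(x)\|^2$. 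Your handling of the sum rule is also the right one: work with the convex function $\psi_x+I_X$, where additivity of the convex subdifferential holds because $X$ has non-empty interior (a standing assumption of the paper), and then translate back to the weak subdifferential of $\phi$ via the shift $\partial\psi_x(x^+)=\partial\phi(x^+)+2\Lxx(x^+-x)$, which is consistent with the paper's definition of $\partial\phi$ in~\cref{app:weakly-convex-basic}. The only points worth stating explicitly in a polished write-up are (i) closedness of $X$ (or lower semicontinuity of $\phi+I_X$) so that the strongly convex problem defining $x^+(x)$ attains its minimum, and (ii) the observation that your bound actually proves the slightly stronger statement $\Sx(x^+,\xi,2\Lxx)\le\|\nabla\phi_{2\Lxx}(x)\|$ for the particular $\xi$ produced by the optimality condition, of which~\cref{eq:moreau-to-primal} is the immediate consequence after taking the minimum over $\partial\phi(x^+)$.
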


\section{Solving~\cref{eq:krylov-at-x}: quadratic optimization on a joint Krylov subspace}
\label{app:krylov}

Following ~\cite[Appendix~A]{carmon2020first}, in this section we describe a procedure for maximizing a general quadratic function~$\Psi_{H,g}(y)$, cf.~\cref{eq:quad-carmon}, over the intersection of~$B_d(2\Ry)$ and the joint Krylov subspace
\begin{equation}
\label{def:krylov}
\cK_{2m} = \cK_{2m}(H, \{g,\xi\}) := \textup{span} \left( \{H^{j} g, H^{j} \xi  \}_{j \in \{0,...,m-1\}} \right)
\end{equation}
in~$O(m)$ computations of the matrix-vector product~$y \mapsto Hy$ and elementwise vector operations on~$E_\y$ (including the inner product).
This procedure is given in pseudocode in~\cref{alg:carmon}. 
It consists of two steps which we are now about to outline and discuss. 

\paragraph{Finding the right basis for~$\cK_{2m}$.}
In the first stage of the algorithm (up to~\cref{line:wtH}) we construct an orthonormal basis of~$\cK_{2m}$ in which the associated linear operator (corresponding to~$H$ in the initial basis), when restricted to~$\cK_{2m}$, is represented by a block tridiagonal matrix~$\wt H$ with blocks of size~$2$ (which pertains to the two vectors~$\{g,\xi\}$ generating~$\cK_{2m}$). 
In other words, we construct~$\wt H  \in \R^{2m \times 2m}$ and~$Q \in \R^{d \times 2m}$ such that
$
\wt H = Q^{\top} H Q,
$
where~$Q$ has orthonormal columns (i.e.,~$Q^\top Q = I_{2m}$) and
\[
\wt H = 
\begin{bmatrix} 
\alpha_1^{\vphantom\top} & \beta_2^{\top} & & \\ 
\beta_2 & \ddots & \ddots & \\ 
&\ddots & \ddots & \beta_m^{\top} \\
& & \beta_m & \alpha_m
\end{bmatrix},
\]
where each~$\alpha_i \in \R^{2 \times 2}$ is symmetric and each~$\beta_i \in \R^{2 \times 2}$ is upper-triangular (so~$\wt H$ is pentadiagonal). 
In the large-scale context, where~$H$ is accessed through the~$y \mapsto H y$ oracle, the one can construct such a decomposition in a computationally feasible manner via the block Lanczos process~\cite{cullum1974block,golub1977block}: 
\begin{enumerate}
\item 
Form~$q_1 \in \R^{d \times 2}$ as the Gram-Schmidt orthonormalization of~$\{g,\xi\}$. 
\item 
Compute~$\alpha_1 = q_1^\top H q_1$ and put~$\beta_1 = 0_{2 \times 2}$. (We let~$0_{a \times b}$ be the~$a \times b$ zero matrix.)
\item 
For~$t \in \{1, m-1\}$ iterate:
\[
\begin{aligned}
q_{t+1}' 
	= Hq_t^{\vphantom \top} - q_t^{\vphantom \top} \alpha_t^{\vphantom \top} - q_{t-1}^{\vphantom \top} \beta_t^{\top}, \quad\quad
(q_{t+1}, \beta_{t+1}) 
	= \text{QR}(q_{t+1}'), \quad\quad
\alpha_{t+1}^{\vphantom \top} 
	= q_{t+1}^{\top} H q_{t+1}^{\vphantom \top}.
\end{aligned}
\]
\end{enumerate}
Here~$\text{QR}(A)$ is the QR decomposition of a squared matrix~$A$---i.e., the ordered pair~$(U,R)$ of square matrices such that~$A = UR$,~$U$ is orthogonal, and~$R$ is upper-triangular. To see that the above process is sound, observe that it incrementally builds up~$q_t, \alpha_t, \beta_t$ that satisfy the matrix relations
\[
H [q_1 \; \cdots \; q_t] = [q_1 \; \cdots \; q_t \, | \, q_{t+1}] 
\begin{bmatrix} 
\alpha_1^{\vphantom\top} & \beta_2^{\top} & & \\ 
\beta_2 & \ddots & \ddots & \\ 
&\ddots & \ddots & \beta_t^{\top} \\
& & \beta_t & \alpha_t \\
\hline 
& & & \beta_{t+1}
\end{bmatrix} \;\; \text{for} \;\; t \in \{1, ..., m-1\}, 
\quad \text{and} \quad
H q_m = [q_1 \; \cdots \; q_m] 
\begin{bmatrix}  
0 \\ \vdots \\ 0 \\ \beta_{m}^\top \\ \alpha_m 
\end{bmatrix}
\] 
while ensuring that~$\beta_t$'s are upper-triangular and~$q_t$'s form an orthonormal system. These relations amount to the identity~$HQ = Q \wt H$ with~$Q = [q_1 \; \cdots \; q_m]$---and thus to~$Q^\top H Q = \wt H$ as desired.
The first part of~\cref{alg:carmon} (until~\cref{line:wtH}) implements the block Lanczos process with an explicitly rendered QR factorization step. 

\paragraph{Passing to the equivalent problem and solving it.}
Once~$\wt H$ and~$Q$ have been constructed, we compute~$\wt g = Q^\top g$ and the top eigenvalue~$\omega_0$ of~$\wt H$ (cf.~\cref{line:lammin}). 
The first of these two steps amounts to~$O(m)$ inner products, and the second one can be done in~$O(m)$ arithmetic operations since~$\wt H$ is pentadiagonal.
Using these data, we recast the initial maximization problem (cf.~\cref{eq:krylov-at-x}) by changing the variable~$y$ to~$z = Q^\top y \in \R^{2m}$---i.e., we pass to
\begin{equation}
\label{eq:krylov-transformed}
\max_{z \in B_{2m}(2\Ry)} \tfrac{1}{2}  z^\top \wt H z +  \wt g^\top z.
\end{equation}
The new problem is equivalent to~\cref{eq:krylov-at-x} in the following sense: if~$z$ is~$\delta$-suboptimal in~\cref{eq:krylov-transformed}, then~$y = Q z$ is~$\delta$-suboptimal in~\cref{eq:krylov-at-x}. 
Thus we can focus on~\cref{eq:krylov-transformed} and then recover a candidate solution to~\cref{eq:krylov-at-x} by multiplying with~$Q$ (which has essentially the same computational cost as forming~$\wt g = Q^\top g$ from~$g$). 

The final part of~\cref{alg:carmon} (after~\cref{line:lammin}) consists of solving~\cref{eq:krylov-transformed}. Here we proceed as follows. 
\begin{enumerate}
\item
We first test if~$\wt H$ is negative-semidefinite (by inspecting~$\sign(\omega_0)$) and, if yes, whether~$B_{2m}(2\Ry)$ contains an {\em unconstrained} maximizer~$z_0$---an optimal solution to the linear system~$\wt H z + \wt g = 0$. 
If the answers to both questions are positive, we in fact found a maximizer on~$B_{2m}(2\Ry)$, so we output~$Q z_0$ and terminate.
Note that here if~$\wt H$ is negative-definite, then~$z_0$ exists, is unique, and is given by~$-\wt H^{-1} \wt g$; otherwise (i.e., when~$\omega_0 = 0$),~$\wt H$ is rank-deficient, so~$\wt H z + \wt g = 0$ is solvable whenever~$\wt g$ is in the range of~$\wt H$, and then the least-norm solution is given by~$z_0 = -\wt H^{\pinv} \wt g$, where~$\wt H^{\pinv}$ is the generalized inverse of~$\wt H$ (i.e.,~$\wt H^\pinv$ has the same eigenspaces as~$\wt H$ and reciprocal nonzero eigenvalues). Note that~$\wt H^{\pinv}$ can be computed in~$O(m)$ arithmetic operations (a.o.). %
\item 
If the above test failed (and thus we have not terminated), then~$\Psi_{\wt H, \wt g}$ has a unique maximizer at the boundary of~$B_{2m}(2\Ry)$, and this maximizer also solves the strictly concave problem
\[
z_{\omega} = \argmax_{z \in B_{2m}(2\Ry)} \tfrac{1}{2}  z^\top (\wt H - \omega I) z +  \wt g^\top z
\]
for the unique value of~$\omega > \max\{\underline \omega, 0\}$ such that~$\|z_{\omega}\| = \Ry$  (see, e.g.,~\cite[Corollary 7.2.2]{conn2000trust}). 
Thus, we can proceed by a root-finding method of choice: evaluation of the mapping~$\omega \mapsto \|z_{\omega}\|$ amounts to solving a well-defined linear system with a pentadiagonal matrix and thus can be done in~$O(m)$ a.o.;  thus, we find a high-accuracy maximizer---exact one in floating point arithmetic---in~$O(m)$ a.o. 
In particular, a good option is practice is to seek for the root of~$\psi(\omega) - {1}/{\Ry}$ with~$\psi(\omega) = 1/\|z_{\omega}\|$ 
by Newton's method as discussed in~\cite[Chapter~7]{conn2000trust}.
\end{enumerate}

\begin{algorithm}
\caption{Approximate maximization of a quadratic~$\Psi_{H,g}(y) = \frac{1}{2}y^\top H y +  g^\top y$ on a ball~$Y = B_d(2\Ry)$} 
\label{alg:carmon}
\begin{algorithmic}[1]
\Require $g \in \R^d$, oracle~$y \mapsto Hy$, $\Ry > 0$, $m \in \N$
\LineComment{{\em Restrict search to the joint Krylov subspace~$\cK_{2m}(H, \{g,\xi\})$, cf.~\cref{def:krylov}, by using~\cref{prop:carmon-perf}}.} %
\item[]
\LineComment{{\em Blockwise Lanczos iterations for~$\cK_{2m} = \cK_{2m}(H, \{g,\xi\})$ with blocks of size~$2$ (\cite[Appendix~A]{carmon2020first}); results in a block tridiagonal matrix~$\wt H = Q^\top H Q \in \R^{2m \times 2m}$ with~$Q \in \R^{d \times 2m}$:~$Q^\top Q = I$.}}
\LineComment{{$q_t \in \R^{d \times 2}$, $\alpha_t, \beta_t \in \R^{2 \times 2}$}}
\item[]
\State 
$q_0 = 0_{d \times 2}$ 
\Comment{{\em $d \times 2$ zero matrix}}
\State \label{line:gram-schmidt-init}
Draw $\xi \sim \Uniform(\mathds{S}^{d-1})$; \;\;
$u = \frac{1}{\|g\|} g$; \;\;
$w = \xi - \langle \xi, u \rangle u$; \;\;
$v = \frac{1}{\|w\|}w$
\State \label{line:gram-schmidt-init-cont}
$q_1 = [u,v]$; \; $\alpha_1 = q_1^{\top} H q_1^{\vphantom \top}$
\Comment{{\em $q_1$ is an orthonormalization of~$[g,\xi]$}}
\item[]
\For{$t \in \{1, ..., m-1\}$}
\State
$q_{t+1}' = Hq_t - q_t \alpha_t - q_{t-1}^{\vphantom \top} \beta_t^{\top}$
\Comment{{\em $\beta_1$ never used since~$q_0 = 0$}} 
\LineComment{{\em Compute $(q_{t+1}^{\vphantom'},\beta_{t+1}^{\vphantom'}) = \textup{QR}(q_{t+1}')$ via Gram-Schmidt:}}  
\State 
$[u',v'] = q_{t+1}'$ \Comment{{\em extract columns from~$q_{t+1}'$}}
\State
$u = \frac{1}{\|u'\|} u'$; \;\;
$w = v' - \lang v',u \rang u$; \;\;
$v = \frac{1}{\|w\|} w$
\State 
$q_{t+1} = [u,v]$; \;\;
$\beta_{t+1} = \begin{bmatrix} \lang u',u \rang & \lang v',u \rang \\ 0 & \lang v',v \rang \end{bmatrix}$
\Comment{{\em $q_{t+1}$ is an orthonormalization of~$q_{t+1}' = q_{t+1}^{\vphantom'} \beta_{t+1}^{\vphantom'}$}}
\State
$\alpha_{t+1}^{\vphantom \top} = q_{t+1}^{\top} H q_{t+1}^{\vphantom \top}$
\EndFor
\item[]
\State 
$Q = \begin{bmatrix} q_1 \; \cdots \; q_m \end{bmatrix}$ 
\Comment{{\em columns of~$Q $ form an orthonormal basis for~$\cK_{2m}$}}
\State
$\wt H = 
\begin{bmatrix} 
\alpha_1^{\vphantom\top} & \beta_2^{\top} & & \\ 
\beta_2 & \ddots & \ddots & \\ 
&\ddots & \ddots & \beta_m^{\top} \\
& & \beta_m & \alpha_m
\end{bmatrix}$
\label{line:wtH}
\Comment{{\em $\wt H = Q^\top H Q \in \R^{2m \times 2m}$ is block tridiagonal with~$2 \times 2$ blocks}}
\item[]
\State $\wt g = Q^\top g$
\State $\omega_0 = \lammax(\wt H)$
\label{line:lammin}
\item[]
\LineComment{{\em Eliminate the interior case: $\Psi_{H,g}$ restricted to~$\cK_{2m}$ is concave and maximized inside the ball:}} 
\If{$\omega_0 < 0$} 
\Comment{{\em $\wt H$ is full-rank, so the unconstrained maximizer~$z_0$ exists and is unique}}
\State $z_0 = -\wt H^{-1} \wt g$
\EndIf
\If{$\omega_0 == 0$ \textbf{and} $\wt H (\wt H^2)^{\pinv} \wt H  \wt g == \wt g$} 
\Comment{{\em $\wt H$ is rank-deficient, but~$\wt g$ is in its range}} 
\State
$z_0 = -\wt H^{\pinv} \wt g$
\EndIf
\If{$z_0$ is defined \textbf{and} $\|z_0\| \le \Ry$}
\State \Return $Q z_0$
\EndIf
\item[]
\LineComment{{\em If we have not terminated yet, the maximizer is on the boundary (\cite[Corollary 7.2.2]{conn2000trust})}} 
\State Find $\omega > \max\{\omega_0, \, 0 \}$ such that the norm of~$z_{\omega} = -(\wt H - \omega I)^{-1} \wt g$ is~$\Ry$ %
\State 
\Return $Qz_{\omega}$
\end{algorithmic}
\end{algorithm}

\paragraph{Time and memory expenses.}
By carefully inspecting~\cref{alg:carmon} one may verify that the Lanczos process takes~$O(1)$ matrix-vector products  and elementwise vector operations in~$E_{\y}$ per iteration, so~$O(m)$ such operations in total. Computation of~$\wt g$ takes~$O(m)$ inner products, and those of~$\omega_0$,~$\wt H^{\pinv}$, and~$(\wt H - \omega I)^{-1}$ each take~$O(m)$ a.o.
Finally, as we previously discussed, a root finder performs~$O(1)$ evaluations of~$\omega$ in the floating-point model of computation.
Hence, the total {\bf computational cost} of~\cref{alg:carmon} in a.o. is
\begin{equation}
\label{eq:time-cost}
O(m (d_\y + \tHVP)),
\end{equation}
where~$\tHVP$ is the cost of computing~$y \mapsto Hy$, and~$d_\y = \dim(E_{\y})$.
The {\bf memory expenses} are
\begin{equation}
\label{eq:memory-cost}
O(md_\y + \mHVP),
\end{equation}
where the first term allows to explicitly store~$Q$ (which we need in order to report the final solution $Q z_0$ or~$Q z_{\omega}$), and the second term is the memory needed for computing~$y \mapsto H y$ (i.e.,~$\mHVP \lsim d_\y^2$ or less if~$H$ has \odima{a} special structure). 
Note also that we can replace the term~$md_\y$ in~\cref{eq:memory-cost} with~$m+d_\y = O(d_\y)$ if~\cref{alg:carmon} in the situation where there is no need to report the {\em solution} in~$E_{\y}$ (e.g., if only the optimal value is of interest). 
Indeed, apart from reporting the final solution, we only need~$Q$ when computing~$\wt g = Q^\top g = [q_{1}^\top g; \; ...; \; q_{m}^\top g ]$---but this can be done incrementally during the Lanczos process, without ever having to memorize more than~$O(1)$ columns of~$Q$ at once.

\bibliographystyle{unsrt}
\bibliography{references}

\end{document}